\documentclass[10pt,letterpaper]{amsart}
\usepackage{graphicx}
\usepackage{a4wide}
\usepackage{units}
\usepackage{color}
\usepackage{subfig}
\usepackage{float}
\usepackage{graphicx}
\usepackage[colorlinks]{hyperref}
\usepackage{upref}
\usepackage{mathtools}
\usepackage{todonotes}
\usepackage{fancybox}
\usepackage{amsthm}
\usepackage{xfrac}

\newtheorem{theorem}{Theorem}[section]
\newtheorem{definition}[theorem]{Definition}
\newtheorem{lemma}[theorem]{Lemma}

\newtheorem{remark}[theorem]{Remark}
\newtheorem*{maintheorem*}{Main Theorem}
\allowdisplaybreaks
\numberwithin{equation}{section}

\newcommand{\abs}[1]{\left|#1\right|}
\newcommand{\norm}[1]{\left\| #1 \right\|}
\newcommand{\R}{\mathbb{R}}

\newcommand{\eps}{\epsilon}

\newcommand{\wh}[1]{\widehat{#1}}

\newcommand{\Dt}{{\Delta t}}

\newcommand{\Grad}{\nabla}
\newcommand{\Div}{\operatorname{div}}
\newcommand{\Curl}{\operatorname{curl}}
\newcommand{\vphi}{\varphi}
\newcommand{\dom}{\Omega}

\newcommand{\bd}{\mathbf{d}}
\newcommand{\rand}{\partial\dom}

\newcommand{\dhalf}{\bd^{m+1/2}}
\newcommand{\disp}{\sigma}
\newcommand{\damp}{\alpha}
\newcommand{\oneconst}{k}

\newcommand{\Econst}{\epsilon_1}

\newcommand{\pot}{\varphi}

\newcommand{\epsi}{\epsilon_1}
\newcommand{\epsii}{\epsilon_2}
\newcommand{\bF}{f}
\newcommand{\bw}{\mathbf{w}}

\newcommand{\bi}{\underline{i}}
\newcommand{\bj}{\mathbf{e}_j}

\newcommand{\weak}{\rightharpoonup}

\newcommand{\weakstar}{\overset{\star}\rightharpoonup}





\graphicspath{figures}

\newcommand{\equationbox}[2]{\begin{center}

		\fbox{
			\begin{minipage}{0.96\textwidth}
				#1
			\end{minipage}
		}
	\end{center}
}
\begin{document}

\title[Maxwell Liquid Crystals] {A convergent numerical scheme for a model of liquid crystal dynamics subjected to an electric field}
\date{\today}
\author[F. Weber]{Franziska Weber}
\address[Franziska Weber]{\newline Department of Mathematical Sciences \newline Carnegie Mellon University \newline 5000 Forbes Avenue, Pittsburgh, PA 15213, USA.}
\email[]{franzisw@andrew.cmu.edu}
\thanks{The author's work was supported in part by NSF awards DMS-1912854 and DMR-2020915.}





 \begin{abstract}
 	We present a convergent and constraint-preserving numerical discretization of a mathematical model for the dynamics of a liquid crystal subjected to an electric field. This model can be derived from the Oseen-Frank director field theory, assuming that the dynamics of the electric field are governed by the electrostatics equations with a suitable constitutive relation for the electric displacement field that describes the coupling with the liquid crystal director field. The resulting system of partial differential equations consists of an elliptic equation that is coupled to the wave map equations through a quadratic source term. We show that the discretization preserves the unit length constraint of the director field, is energy-stable and convergent. In numerical experiments, we show that the method is stable even when singularities develop. Moreover, predictions about the alignment of the director field with the electric field are confirmed.
 \end{abstract}

 \maketitle
\section{Introduction}
Liquid crystals are materials that consist of elongated molecules and exhibit states that lie between the liquid and the solid phase, the so called liquid crystal phase.
Some examples include shaving cream, cell membranes, as well as the nematic liquid crystals used in displays (LCD displays). 
The microscopic structure of liquid crystals affects at a macroscopic scale the mechanical response to stress and strain. For instance, the molecules of certain liquid crystals react to electric fields on a microscopic scale, which on a macroscopic scale changes the polarization of the light passing through the material. Monitors take advantage of this property to allow a certain amount of red, green, or blue light through each pixel.
Their sensitivity to electric and magnetic fields which allows to orient the molecules in a certain way, makes liquid crystals attractive for many engineering applications.

In this article, we develop a convergent numerical scheme for a simple model for the dynamics of a liquid crystal director field subjected to electromagnetic forces. The model is derived from the Oseen-Frank theory which suggests that the liquid crystal director field, $\bd:\dom\subset \R^n\rightarrow \mathbb{S}^2$, $n=1,2,3$, which is a unit length vector pointing in the direction of the main orientation of the molecules, behaves in such a way that in an equilibrium, the Oseen-Frank energy~\cite{Oseen1993,frank1958},
\begin{equation}\label{eq:ofe}
E_{OF}(\bd) = \int_{\dom} w_{OF}(\bd,\Grad\bd),
\end{equation}
with the energy density
\begin{equation}
\label{eq:ofed}
w_{OF}(\bd,\Grad \bd)=\frac12 k_1 (\Div\bd)^2+\frac12 k_2 (\bd\cdot\Curl\bd)^2+k_3(\bd\times\Curl\bd)^2+\frac12(k_2+k_4)\Div[(\bd\cdot\Grad)\bd-(\Div\bd)\bd],
\end{equation}
is minimized. A common approach is to set $k_1=k_2=k_3=\oneconst>0$ and $k_4=0$, the so called \emph{one-constant approximation}~\cite{Stewart2004}. In this case, it can be shown that the energy density becomes
\begin{equation}
\label{eq:oneconste}
w_{OF}=\frac12 \oneconst |\Grad \bd|^2.
\end{equation}
 We will also take this approach here, despite it not being realistic for all physical scenarios. The scheme that is proposed here, can be extended in a stable manner to the general case with different constants, but it is unclear whether convergence can be shown in that case also.

Models that incorporate the effects of external fields on the liquid crystal are obtained by adding terms to the Oseen-Frank energy density~\eqref{eq:ofed}. For an electric field $E$, this term has been found to be of the form~\cite{Stewart2004},
\begin{equation}
\label{eq:electricenergy}
w_e = -\frac{1}{2} E\cdot D,\quad D = \eps_0\eps_\perp E+\eps_0\eps_a(\bd\cdot E)\bd, 
\end{equation}
where $D$ is the electric displacement field, 
$\eps_0$ is the permittivity of free space, $\eps_\perp,\eps_a$ are dielectric constants depending on material properties and $\eps_a$ is called dielectric anisotropy of the liquid crystal. Values of $\eps_a$ can be negative or positive, depending on the type of liquid crystal. Typical values are between $-0.7$ and $11$~\cite{Collings1998,Skarp1980,Stephen1974}. When $\eps_a>0$, the director field tends to align with the electric field and when $\eps_a<0$, it prefers to be perpendicular~\cite{Stewart2004,Chandrasekhar1992,deGennes1995}.

In an equilibrium situation, the director field minimizes the free energy~\eqref{eq:ofe}, now modified with term~\eqref{eq:electricenergy}, while the electric field $E$ and the electric displacement field $D$ satisfy the electrostatics equations (Gauss' law and Faraday's law of induction):
\begin{equation}
\label{eq:electrostatics}
\Div D = \rho_{\text{free}},\quad \Curl E= 0,
\end{equation} 
where $\rho_{\text{free}}$ is the free electric charge density. In liquid crystal applications, there are generally no free charges, so one can set this source term to be zero. In a smooth, simply-connected domain, the second equation implies that $E$ can be written as the gradient of a potential, i.e., $E=\Grad\vphi$. Combining~\eqref{eq:electrostatics} with the expression for $D$ in equation~\eqref{eq:electricenergy}, we thus obtain an elliptic equation for the electric potential
\begin{equation}
\label{eq:ellipticE}
-\Div(\Grad\vphi+\epsii(\bd\cdot\Grad\vphi)\bd)=f,
\end{equation}
where $\epsii:=\eps_a\eps_\perp^{-1}$ and $f=-\rho_{\text{free}}/(\eps_0\eps_\perp)$,
that can be augmented with Dirichlet boundary conditions 
\begin{equation}\label{eq:bcpotential}
\vphi(t,x) = g(t,x),\quad x\in \partial\dom,\, t\geq 0,
\end{equation}
for the potential. 
To account for non-equilibrium situations such as the dynamic transition between two equilibrium states, e.g., the Freedericksz transition~\cite{Freedericksz1927,Freedericksz1933,Shelestiuk2011,Aursand2016}, one adds inertial and damping terms to the equations for the director field (which can be obtained as the Euler-Lagrange equations from a suitable action functional)~\cite{Glassey1996,Macdonald2015}, since the liquid crystal molecules move on a slower time scale compared to the electric field:
\begin{equation}\label{eq:directorefield}
\begin{split}
\disp\bd_{tt}+\damp\bd_t =\oneconst\Delta \bd +\gamma\bd+ \epsi(\bd\cdot E)E,
\end{split}
\end{equation}
where $\gamma$ is a Lagrange multiplier enforcing the constraint $|\bd|=1$ and $\eps_1:=\eps_0\eps_a$. In addition, one could add the effects of the fluid flow and would end up with a simplified version of the Ericksen-Leslie system~\cite{Ericksen1976,Leslie1987}. 
Combining~\eqref{eq:ellipticE},~\eqref{eq:bcpotential} and~\eqref{eq:directorefield}, and augmenting it with initial and boundary conditions for $\bd$, we obtain the system
\begin{subequations}
	\label{eq:model1}
	\begin{align}
	\label{eq:deq}
	\disp\bd_{tt}+\damp\bd_t =\oneconst\Delta \bd +\gamma\bd+ \epsi(\bd\cdot \Grad\vphi)\Grad\vphi,\quad &x\in \dom,\, t>0,\\
	-\Div(\Grad\vphi+\epsii(\bd\cdot\Grad\vphi)\bd)=f,\quad &x\in \dom,\, t\geq 0\\
	\vphi = g,\quad \Grad\bd\cdot \nu = 0,\quad &x\in\partial\dom,\, t\geq 0,\\
	\bd(0,x)=\bd_0(x),\quad \partial_t\bd(0,x)=\bd_1(x),\quad &x\in \dom,\\
	|\bd|=1,\quad &\textrm{a.e. }\, x\in\dom,\, t\geq 0,
	\end{align}
\end{subequations}
where $\nu$ is the outward unit normal, and $\bd_0$, $\bd_1$ a given initial data. Due to the nonlinearities and the constraint $|\bd|=1$, it may not be realistic to expect classical solutions to system~\eqref{eq:model1}. In fact, it has been shown that blow-up solutions to the wave maps equation (which is obtained from~\eqref{eq:deq} by setting $\damp=\epsi=0$) exist in two and three spatial dimensions, see e.g.,~\cite{Shatah1994,Tataru2004,Krieger2008,Cazenave1998}. Hence it might be more meaningful to seek out weak solutions. We define these as follows:
\begin{definition}\label{def:weaksol}
	Let $T>0$ and $\dom\subset\R^n$ a bounded domain with uniformly Lipschitz boundary. Assume $f\in W^{1,2}([0,T];L^2(\dom))$ and $g\in C^1([0,T]\times\partial\dom)$ which can be extended to a function $\widetilde{g}\in L^\infty(0,T;H^1(\dom))\cap W^{1,2}([0,T];H^1(\dom))$. We say $(\bd,\pot)$ is a weak solution of \eqref{eq:model1} for initial data $\bd_0\in H^1(\dom)$ with $|\bd_0(x)|=1$ a.e. and $\bd_1\in L^2(\dom)$, if $\bd\in L^\infty((0,T)\times\dom)\cap L^\infty((0,T); H^1(\dom))$, $\bd_t\in L^\infty((0,T); L^2(\dom))$ and $\vphi\in L^\infty((0,T;H^1(\dom)))$ satisfy for all $\phi\in C^\infty_c([0,T)\times\dom)$ and all $\psi\in H^1_0(\dom)$
	\begin{align}\label{eq:weakformulation}
	\int_0^T\int_{\dom} \left(\disp (\bd_t\times\bd)\cdot\partial_t \phi-\damp (\bd_t\times\bd)\cdot\phi\right) dx dt&+\disp\int_{\dom}(\bd_1\times\bd_0)\cdot\phi(0,x) dx\\
	=\int_0^T\int_{\dom}\big((\oneconst\Grad\bd\times \bd):\Grad\phi&-\Econst (\bd\cdot \Grad\vphi) (\Grad\vphi\times \bd)\cdot\phi\big) dxdt,\notag\\
	\int_{\dom}  \Grad\pot\cdot\Grad \psi+ \epsii (\bd\cdot\Grad\pot)\cdot(\bd\cdot\Grad\psi) dx &=
	\int_{\dom}\bF \psi dx,  \quad\text{a.e. } t\in[0,T],\notag\\	
	|\bd|=1,\quad \text{a.e. in } (0,T)\times\dom;&\quad \vphi=g,\quad x\in\partial\dom,\, t\in [0,T],\notag
	\end{align}
	and 
	\begin{equation}\label{eq:contat0}
	\lim_{t\rightarrow 0^+} \norm{\bd(t,\cdot)-\bd_0}_{L^2(\dom)}=0.
	\end{equation}
\end{definition}

To design a stable and convergent numerical scheme for~\eqref{eq:model1}, which preserves the unit length constraint for $\bd$, we choose  to reformulate the equations using the \emph{angular momentum} $\bw:=\bd_t\times \bd$. This approach was taken in~\cite{Karper2014} for the wave map equation and resulted in a constraint-preserving convergent scheme. Formally, assuming smooth solutions,~\eqref{eq:model1} then becomes
\begin{subequations}
	\label{eq:angmomentum}
	\begin{align}
	\label{seq:dt}
	\bd_t = \bd\times\bw,\quad &x\in \dom,\, t>0,\\
	\disp\bw_{t}+\damp\bw =\oneconst\Delta \bd \times\bd+ \epsi(\bd\cdot \Grad\vphi)\Grad\vphi\times\bd,\quad &x\in \dom,\, t>0,\\
	-\Div(\Grad\vphi+\epsii(\bd\cdot\Grad\vphi)\bd)=f,\quad &x\in \dom,\, t>0\\
	\vphi = g,\quad \Grad\bd\cdot \nu = 0,\quad &x\in\partial\dom,\, t\geq 0,\\
	\bd(0,x)=\bd_0(x),\quad \bw(0,x)=\bd_1(x)\times\bd_0(x),\quad &x\in \dom,
	\end{align}
\end{subequations}
We notice that the Lagrange multiplier term which enforces the constraint disappears. Nevertheless, this reformulation preserves the constraint, at least at a formal level. One can see this, by taking the inner product of~\eqref{seq:dt} with $\bd$:
\begin{equation*}
\frac12\partial_t |\bd|^2=\bd\cdot\bd_t = (\bd\times\bw)\cdot \bd =0,
\end{equation*}
where we used the orthogonality property of the cross product. Hence, the modulus of $\bd$ stays constant in time at every spatial point, and if initially $|\bd_0(x)|=1$, this will be true for any $t>0$ as well, at least at a formal level. Using an implicit discretization in time, this constraint can be preserved at the discrete level. The implicit discretization will also ensure energy-stability which in turn allows to obtain the necessary a priori estimates for passing the discretization parameters to zero and showing that the limit is a weak solution of~\eqref{eq:model1}. To discretize in space, we use a combination of finite differences for the variables $\bd$ and $\bw$ and piecewise linear finite elements for $\vphi$. This ensures preservation of the constraint $|\bd|=1$ everywhere on the grid. It is probably also possible to use a finite element discretization for $\bd$ and $\bw$, as it was done in~\cite{Bartels2015} for the wave map equation, but then one can only hope to preserve the constraint at every node of the mesh or on average in each grid cell. In addition, the method we propose here, is rather simple and straightforward to implement. 
Because the algebraic system of equations resulting from the discretization is nonlinear, we show that a unique solution exists using a fixed point iteration.

Most rigorous numerical analysis results for related systems do not conserve the constraint $|\bd|=1$ explicitly and use instead a relaxed version, obtained by adding a term
\begin{equation*}
w_{R}=\frac{1}{4\delta}(|\bd|^2-1)^2,
\end{equation*}
where $\delta>0$ is a small parameter, to the energy density~\eqref{eq:ofed}, e.g.,~\cite{Liu2000,Liu2002,Walkington2011}. This results in the solution having higher regularity and hence better compactness properties. As we will see, for system~\eqref{eq:model1}, this is not necessary to achieve stability or convergence of the numerical scheme. Besides that, works in which the unit length constraint on the director field is preserved, concern simpler systems, such as the wave or heat map flow or the Landau-Lifshitz-Gilbert equation~\cite{Bartels2007,Alouges2006,Barrett2007,Bartels2008,Bartels2015,Banas2010,Bartels2009,Hu2009}, or 1D settings~\cite{Aursand2016,Aursand2015}. The numerical methods of~\cite{Aursand2016,Aursand2015} have not been shown to be convergent, to the best of our knowledge. Therefore, our contribution is the construction of a constraint-preserving and convergent discretization of~\eqref{eq:model1}. The convergence proof additionally proves existence of weak solutions for~\eqref{eq:model1}. Other numerical methods using Hamiltonian discretizations related to the angular momentum discussed above to preserve energy and constraints are e.g.~\cite{Austin1993,Lewis2000,Krishnaprasad1987}. 
\begin{remark}
	Magnetic forces could be added in a similar way as electric fields, using the magnetostatics equations. We do not consider these here because the resulting system is of similar type and no mathematical difficulty is added, except that the system becomes larger, as it involves second elliptic equation that needs to be solved, and hence its numerical solution takes more computational effort. 
\end{remark}

\subsection{Outline of this article}
In the following section, we present the numerical scheme and prove that it preserves the unit length constraint and satisfies an energy stability bound. Then, in Section~\ref{sec:conv}, we prove it converges to a weak solution of~\eqref{eq:model1}, and in Section~\ref{sec:fixed}, we show that the nonlinear algebraic system of equations resulting from the discretization possesses a unique solution which can be obtained via fixed point iteration. We conclude with numerical experiments in Section~\ref{sec:num}.
\section{The numerical scheme}
In this section, we describe the numerical scheme to approximate system~\eqref{eq:angmomentum} and prove that it preserves the unit length constraint for the director field, and is energy-stable. We start by introducing the time discretization.
\subsection{Time discretization}
We let $T>0$ the final time and $\Dt>0$ some small number, conditions on it will be specified later on. We define $t^m:=m\Dt$, the time steps, and $N_T=T/\Dt\in \mathbb{N}$ ($\Dt$ is w.l.o.g. such that $T/\Dt$ is an integer). Then we define the time differences and averages,
\begin{equation*}
D_t a^m :=\frac{a^{m+1}-a^m}{\Dt},\quad a^{m+1/2}=\frac{a^m+a^{m+1}}{2},
\end{equation*}
for vectors $a=(a^0,\dots,a^{N_T})$.
We consider the following implicit discretization of~\eqref{eq:angmomentum}:
\begin{subequations}\label{eq:m1}
	\begin{align}
	&D_t \bd^m=\dhalf\times \bw^{m+1/2},\\
	&\disp D_t \bw^{m}=\oneconst\Delta \dhalf\times \dhalf-\damp \bw^{m+1/2}\\
	&\hphantom{\disp D_t \bw^{m}=\oneconst}+\frac{\epsi}{2} \bigl[(\Grad\vphi^{m+1}\cdot \dhalf) \Grad\vphi^m+ (\Grad\vphi^{m}\cdot \dhalf) \Grad\vphi^{m+1} \bigr]\times \dhalf,  \\
	\label{seq3:m1fi}
	&-\Div ( \Grad \vphi^m+ \epsii(\bd^m\cdot \Grad \vphi^m) \bd^m )=\bF^m,
	\end{align}
\end{subequations}
$m=0,1,\dots$, and $x\in\dom$, with boundary conditions 
\begin{equation}\label{eq:bctd}
\Grad\bd^{m}\cdot \nu=0,\quad \vphi^m=g^m, \quad x\in \rand,
\end{equation}
where $\nu$ is the outward unit normal and $g^m(x)=g(t^m,x)$, $m=0,1,2,\dots$. We assume that $g$ has an extension $\widetilde{g}\in L^\infty([0,T],;H^1(\dom))\cap W^{1,2}([0,T];H^1(\dom))$ to the whole space.
One can derive an energy balance for this method:
\begin{multline*}
\frac{1}{2}D_t \int_{\dom} \left(\oneconst|\Grad \bd^m|^2+\frac{\disp}{2}|\bw^m|^2+2|\Grad\vphi^m|^2+2\epsii (\bd^m\cdot \Grad\vphi^m)^2-2\vphi^m f^{m}\right)dx\\
-\left(1+\frac{\epsi}{2\epsii}\right)D_t\int_{\dom}\left(\Grad\vphi^m+\epsii(\Grad\vphi\cdot\bd^m)\bd^m\right)\Grad\widetilde{g}^m dx+D_t\int_{\dom}f^m\widetilde{g}^m dx\\
= -\frac{\epsi}{2\epsii}\int_{\dom}D_t\Grad\widetilde{g}^m\left(2\Grad\vphi^{m+\sfrac{1}{2}}+\epsii(\Grad\vphi^m\cdot\bd^m)\bd^m+\epsii(\Grad\vphi^{m+1}\cdot\bd^{m+1})\bd^{m+1}\right) dx\\
      +\frac{\epsi}{\epsii}\int_{\dom} (\vphi^{m+\sfrac{1}{2}}-\widetilde{g}^{m+\sfrac{1}{2}}) D_t f^m \, dx-\damp\int_{\dom}|\bw^{m+\sfrac{1}{2}}|^2 dx.
\end{multline*}
Using Poincar\'e's and Gr\"{o}nwall's inequality, one can derive bounds on $\bd^m, \bw^m$ and $\vphi^m$ from this. Since we will do this in detail for the fully discrete method, we do not outline the details here. 
\subsection{Spatial discretization}\label{sec:sdisc}
To discretize space, we use first order finite differences for the variables $\bd$ and $\bw$ (and approximate them using piecewise constants), and a finite element formulation for the variable $\vphi$ with continuous piecewise linear basis functions. This way, the constraint $|\bd|=1$ is conserved at any point in space and time, and it is easy to formulate a convergent fixed point iteration, as we will see later.
We assume for this section that $\dom=[0,1]^n$, the extension of the numerical method and analysis to other rectangular domains is straightforward. Using a generalized version of the 9-point stencil Laplacian (or 5-point in 2D), it may be possible to extend to more complicated domains also. 
For the variables $\bd$ and $\bw$ we define the following grid: We let $h:=1/N>0$ for some $N\in\mathbb{N}$, the mesh width, define multiindices 
\begin{equation*}
\bi:=(i_1,\dots,i_n)\in \mathcal{I}_{N}:=\{0,\dots, N+1\}^n
\end{equation*}
and grid cells $\mathcal{C}_{\bi} = ((i_1-1)h,i_1 h]\times\dots\times ((i_n-1)h,i_n h]$ for $\bi\in \mathcal{I}_N$. 
We let $\mathring{\mathcal{I}}_N:=\{1,\dots, N\}^n$ the indices for the interior cells, and $\partial\mathcal{I}_N:=\{\bi\,|\, \exists j\,\text{with }\, i_j\in \{0,N+1 \}\}$ the indices of the boundary cells.
 The approximations of $\bd$ and $\bw$ in grid cell $\mathcal{C}_{\bi}$ at time $t^m$ will be denoted by $\bd_{\bi}^m$ and $\bw_{\bi}^m$. 
 It will also be useful to define the piecewise constant functions 
 \begin{align}
 \label{eq:pwconstd}
 \bd_h^m(x)=\bd_{\bi}^m,\quad x\in \mathcal{C}_{\bi},\qquad& \bd_h(t,x)  = \bd_{h}^m(x),\quad t\in [t^m,t^{m+1}),\, x\in \dom,\\
 \bw_h^m(x)=\bw_{\bi}^m,\quad x\in \mathcal{C}_{\bi},\qquad&\bw_h(t,x)  = \bw_{h}^m(x),\quad t\in [t^m,t^{m+1}),\, x\in\dom,\\
 \bd_h^{m+\nicefrac{1}{2}}(x)=\bd_{\bi}^{m+\nicefrac{1}{2}},\quad x\in \mathcal{C}_{\bi},\qquad&\wh{\bd}_h(t,x)  = \bd_{h}^{m+\nicefrac{1}{2}}(x),\quad t\in [t^m,t^{m+1}),\, x\in \dom,\\
 \label{eq:whhat}
 \bw_h^{m+\nicefrac{1}{2}}(x)=\bw_{\bi}^{m+\nicefrac{1}{2}},\quad x\in \mathcal{C}_{\bi},\qquad&\wh{\bw}_h(t,x)  = \bw_{h}^{m+\nicefrac{1}{2}}(x),\quad t\in [t^m,t^{m+1}),\, x\in \dom.
 \end{align}
  The variable $\vphi$ will be approximated in space using continuous piecewise linear Lagrangian finite elements on a simplicial uniformly shape-regular triangulation $\mathcal{T}=\{K_i\}_{i=1}^{N_{\mathcal{T}}}$ for $\dom$ that could be independent of the discretization for $\bd$ and $\bw$ as long as $h_{\mathcal{T}}:=\max_{i=1,\dots,N_{\mathcal{T}}}\text{diam} K_i\approx h$. We let $\mathcal{N}(\mathcal{T})$ be the set of nodes of the triangulation and $\mathcal{N}(\mathring{\mathcal{T}})$ the set of interior nodes. To keep things simple, we can pick a regular triangulation in which the nodes coincide with the corners of the cells $\mathcal{C}_{\bi}$. We denote $\vphi_{i}^m$ the approximation of $\vphi$ at time $t^m$ at node $w_i\in\mathcal{N}(\mathcal{T})$ of the mesh and $b_i$ the hat function associated with the same node. We denote $\widetilde{V}_h:=\text{span}(b_1,\dots,b_{\mathcal{N}(\mathcal{T})})$ and $V_h=\text{span}(b_i\,|\, w_i\in \mathcal{N}(\mathring{\mathcal{T}}))$. We then let
 \begin{equation}\label{eq:pwconstphi}
\begin{split}
 \vphi^m_h(x)&=\sum_{w_i\in\mathcal{N}(\mathcal{T})}\vphi^m_i b_i(x),\quad x\in\dom,\\
  \vphi_h(t,x)&=\vphi^m_h(x),\quad  t\in [t^m,t^{m+1}),\,x\in\dom,
\end{split}
 \end{equation}
 that is, a piecewise constant interpolation of $\vphi^m_h$ in time.
 Suppose $\widetilde{g}\in L^\infty(0,T;H^1(\dom))\cap W^{1,2}(0,T;H^1(\dom))$ is an extension of the boundary data $g$ which exists if $g$ is sufficiently smooth and the boundary of $\dom$ is uniformly Lipschitz~\cite{Necas2012}. Then we define its piecewise linear in space and piecewise constant in time interpolation on $\mathcal{T}$ by
 \begin{equation}\label{eq:ghtilde}
 \begin{split}
 \widetilde{g}_h^m(x)& = \sum_{w_i\in\mathcal{N}(\mathcal{T})}\widetilde{g}(t^m,w_i) b_i(x),\quad  x\in\dom,\\
  \widetilde{g}_h(t,x)& = \sum_{w_i\in\mathcal{N}(\mathcal{T})}\widetilde{g}(t^m,w_i) b_i(x),\quad  t\in [t^m,t^{m+1}),\,x\in\dom. 
 \end{split}
 \end{equation}
 Then we let $u^m_{i}=\vphi^m_i-\widetilde{g}(t^m,w_i)$ for $w_i\in \mathcal{N}(\mathcal{T})$ and define
 \begin{equation}
 \label{eq:uoh}
 \begin{split}
 u_{0,h}^m(x)&=\vphi_h^m(x)-\widetilde{g}_h(t^m,x)=\sum_{w_i\in\mathcal{N}(\mathring{\mathcal{T}})}u^m_i b_i(x),\quad x\in\dom,\, m=0,1,2,\dots,\\
 u_{0,h}(t,x)&=\vphi_h(t,x)-\widetilde{g}_h(t,x)=u_{0,h}^m(x),\quad t\in [t^m,t^{m+1}),\,x\in\dom.
 \end{split}
 \end{equation}
 Clearly, if $\vphi_h(t,\cdot)\in H^1(\dom)$, then $u_{0,h}(t,\cdot)\in H^1_0(\dom)$ and we can recover $\vphi_h$ from $u_{0,h}$ by adding $\widetilde{g}_h$.

To define and analyze the scheme, we will need the following difference operators for $\bd$ and $\bw$:  
 For a quantity $\sigma_{\bi}$ defined on the grid $\{\mathcal{C}_{\bi}\}_{\bi\in\mathcal{I}_N}$, we denote
\begin{equation*}
D_j^\pm \sigma_{\bi} := \pm \frac{\sigma_{\bi\pm\bj}-\sigma_{\bi}}{h},\quad \Grad_h^{\pm}:= (D_1^\pm,\dots,D_n^\pm)^\top,\quad 
\Delta_h:=\sum_{j=1}^n D_j^\pm D_j^\mp.
\end{equation*} 
We denoted by $\bj$ the $j$th unit vector. We also need the bilinear forms
\begin{equation*}
\begin{split}
a(u,v)(t)&=\int_{\dom}\left[\Grad u\cdot\Grad v+\epsii(\bd(t,x)\cdot\Grad u)(\bd(t,x)\cdot \Grad v)\right] dx,\quad t\in [0,T],\\
a^m(u,v)&=\int_{\dom}\left[\Grad u\cdot\Grad v+\epsii(\bd(t^m,x)\cdot\Grad u)(\bd(t^m,x)\cdot \Grad v)\right] dx,\quad m=0,1,2,\dots,
\end{split}
\end{equation*}
which are well-defined for functions $u,v\in H^1(\dom)$ as long as $\bd\in L^\infty([0,T]\times\dom)$, and the discrete versions
\begin{equation*}
\begin{split}
a_h(u,v)(t)&=\int_{\dom}\left[\Grad u\cdot\Grad v+\epsii(\bd_h(t,x)\cdot\Grad u)(\bd_h(t,x)\cdot \Grad v)\right] dx,\quad t\in [0,T],\\
a_h^m(u,v)&=\int_{\dom}\left[\Grad u\cdot\Grad v+\epsii(\bd_h(t^m,x)\cdot\Grad u)(\bd_h(t^m,x)\cdot \Grad v)\right] dx,\quad m=0,1,2,\dots
\end{split}
\end{equation*}
for $u,v\in \widetilde{V}_h$, and
\begin{equation*}
F(v)(t): = \int_{\dom}f(t,x)v(x) dx,\quad t\in [0,T],\quad  F^m(v):=\int_{\dom}f(t^m,x)v(x)dx,\quad m=1,2,\dots,
\end{equation*}
for $v\in H^1(\dom)$.
Based on \eqref{eq:m1}, we define the fully discrete scheme
\begin{subequations}\label{eq:m2}
	\begin{align}
	\label{seq:dd}
	&D_t \bd^m_{\bi}=\dhalf_{\bi}\times \bw^{m+1/2}_{\bi},\\
	\label{seq:wd}
	&\disp D_t \bw^{m}_{\bi}=\oneconst\Delta_h \dhalf_{\bi}\times \dhalf_{\bi}-\damp \bw^{m+1/2}_{\bi}\\
	&\qquad +\frac{\epsi}{2|\mathcal{C}_{\bi}|}\int_{\mathcal{C}_{\bi}}\left((\Grad\vphi_h^{m+1} \cdot\dhalf_{\bi})\Grad\vphi^m_h +(\Grad\vphi_h^{m} \cdot\dhalf_{\bi})\Grad\vphi^{m+1}_h\right) dx\times \dhalf_{\bi},	\notag  \\
&	u_{0,h}^m\in V_h,\quad\text{s.t.}\quad a_h^m(u_{0,h}^m,v)=F^m(v)-a_h^m(\widetilde{g}_h^m,v),\quad \forall v\in V_h,\quad \vphi^m_h := u_{0,h}^m+\widetilde{g}_h^m.
	\label{seq:phid}
	\end{align}
\end{subequations}
$m=0,1,\dots$, $\bi\in\{1,\dots,N\}^n$, with homogeneous boundary conditions for the variable $\bd$ 
	\begin{equation*}
\bd_{\bi}^m=\bd_{\bi+\bj}^m,\quad i_j=0,\quad \bd_{\bi+\bj}^m=\bd_{\bi}^m,\quad i_j=N,
\end{equation*}
and initial conditions
\begin{equation*}
\bd^0_{\bi}=\frac{1}{|\mathcal{C}_{\bi}|}\int_{\mathcal{C}_{\bi}}\bd_0(x) dx ,\quad \bw^0_{\bi}=\frac{1}{|\mathcal{C}_{\bi}|}\int_{\mathcal{C}_{\bi}} \bw_0(x) dx .
\end{equation*}
\subsubsection{Constraint preservation and discrete energy stability}
We first note that the numerical scheme~\eqref{eq:m2} preserves the constraint $|\bd^m_{\bi}|=1$:
\begin{lemma}\label{lem:constraint}
	If $|\bd_{\bi}^0|=1$ for all $\bi\in \mathcal{I}_N$, then $|\bd_{\bi}^m|=1$ for all $\bi\in \mathcal{I}_N$ and $m\in\mathbb{N}$.
\end{lemma}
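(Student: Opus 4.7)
The plan is to proceed by induction on $m$, and to exploit the fact that the discretization \eqref{seq:dd} is written in the \emph{midpoint} form $D_t\bd^m_{\bi}=\bd^{m+1/2}_{\bi}\times\bw^{m+1/2}_{\bi}$, where the average $\bd^{m+1/2}_{\bi}=\tfrac{1}{2}(\bd^m_{\bi}+\bd^{m+1}_{\bi})$ appears on the right-hand side. This averaging is precisely what makes the formal continuous computation $\partial_t|\bd|^2=2\bd\cdot\bd_t=2\bd\cdot(\bd\times\bw)=0$ go through on the discrete level.

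The base case $m=0$ is the assumption. For the inductive step, assume $|\bd^m_{\bi}|=1$ for all $\bi$ and fix an arbitrary $\bi$. Taking the Euclidean inner product of \eqref{seq:dd} with $\bd^{m+1/2}_{\bi}$ kills the right-hand side, since $\bd^{m+1/2}_{\bi}\cdot(\bd^{m+1/2}_{\bi}\times\bw^{m+1/2}_{\bi})=0$ by the orthogonality property of the cross product. On the left-hand side, using the algebraic identity $(a+b)\cdot(a-b)=|a|^2-|b|^2$ with $a=\bd^{m+1}_{\bi}$ and $b=\bd^m_{\bi}$, one obtains
\begin{equation*}
\bd^{m+1/2}_{\bi}\cdot D_t\bd^m_{\bi}=\frac{1}{2\Dt}\bigl(\bd^{m+1}_{\bi}+\bd^m_{\bi}\bigr)\cdot\bigl(\bd^{m+1}_{\bi}-\bd^m_{\bi}\bigr)=\frac{1}{2\Dt}\bigl(|\bd^{m+1}_{\bi}|^2-|\bd^m_{\bi}|^2\bigr).
\end{equation*}
Hence $|\bd^{m+1}_{\bi}|^2=|\bd^m_{\bi}|^2=1$, closing the induction.

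No analytic obstacle should arise here: the argument is purely algebraic and pointwise in $\bi$, and it does not use \eqref{seq:wd} or \eqref{seq:phid} at all, nor the boundary conditions, nor any regularity of $\vphi_h^m$. The only implicit hypothesis is that the nonlinear system \eqref{eq:m2} actually admits a solution at each time step so that the symbols $\bd^{m+1}_{\bi}$ are well-defined; this is deferred to the fixed-point analysis in Section~\ref{sec:fixed}, and for the present lemma it suffices to note that \emph{any} solution of \eqref{seq:dd} must satisfy the identity above. I would conclude with a brief remark that this is the main reason for choosing the midpoint (Crank--Nicolson-type) discretization of the cross-product term rather than a fully explicit or fully implicit one, since neither of those would preserve the unit-length constraint exactly.
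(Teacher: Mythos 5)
Your argument is correct and is essentially the paper's own proof: take the inner product of \eqref{seq:dd} with $\bd^{m+1/2}_{\bi}$, use the orthogonality of the cross product to kill the right-hand side, and expand the left-hand side via $(a+b)\cdot(a-b)=|a|^2-|b|^2$. The additional remarks on induction, the role of the midpoint averaging, and the deferred existence of $\bd^{m+1}_{\bi}$ are accurate but not needed beyond what the paper states.
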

\begin{proof}
	We take the inner product of equation \eqref{seq:dd} with $\bd^{m+1/2}_{\bi}$. The right hand side is orthogonal to $\bd_{\bi}^{m+1/2}$, hence it vanishes. For the left hand side we have
	\begin{equation*}
	\bd^{m+1/2}_{\bi}\cdot D_t \bd^m_{\bi} = \frac{1}{2\Dt} \left(\bd^m_{\bi} + \bd^{m+1}_{\bi}\right) \cdot \left(\bd^{m+1}_{\bi}- \bd^m_{\bi}\right) = \frac{1}{2\Dt}\left( \left| \bd^{m+1}_{\bi}\right|^2 - \left|\bd^m_{\bi}\right|^2 \right).  
	\end{equation*}	
	Hence $|\bd_{\bi}^{m+1}|^2 = |\bd_{\bi}^m|^2 = \dots = |\bd_{\bi}^0|^2 =1$.
\end{proof}	
The numerical scheme also satisfies a discrete energy inequality which is crucial for stability and proving convergence:
\begin{lemma}\label{lem:discreteenergy}
	Assume $\Delta t<\tau$ for some sufficiently small constant $\tau>0$, $\oneconst>0$, $\disp>0$ and $\damp\geq 0$ (or $\damp>0$ and $\disp\geq 0$), and $\epsii>-1$. Then, the approximations $(\bd_h,\bw_h,\vphi_h)$ defined through scheme~\eqref{eq:m2} and~\eqref{eq:pwconstd}-\eqref{eq:pwconstphi}, satisfy for all $M\in\mathbb{N}$
	\begin{align}\label{eq:discreteenergyestimate}
	&\frac{\oneconst}{2}\norm{\Grad_h\bd_h(t^M)}_{L^2}^2+\frac{\disp}{2}\norm{\bw_h(t^M)}_{L^2}^2+
	\norm{\Grad\vphi_h(t^M)}_{L^2}^2\\
	&\leq 
	C\left(\norm{\widetilde{g}_h(t^M)}_{H^1}^2+\norm{f(t^M)}_{L^2}^2\right)+C\exp(Ct^M)\Dt\sum_{m=0}^{M-1}\norm{f(t^m)}_{L^2}^2\notag\\
&\quad	+C(t^M\exp(Ct^M)+1)\left(\norm{\Grad\bd_0}^2_{L^2}+ \norm{\bw_0}_{L^2}^2
	+	\norm{\Grad\vphi_h^0}^2_{L^2(\dom)}+\norm{f(0)}_{L^2}^2+\norm{\widetilde{g}_h^0}_{H^1(\dom)}^2\right)\notag\\
&\quad	+C(1+t^M\exp(C t^M))\Dt\left(\sum_{m=0}^{M-1}\norm{D_t f(t^m)}_{L^2}^2+\sum_{m=0}^{M-1}\norm{D_t\Grad\widetilde{g}_h(t^m)}^2_{L^2}\right)\notag\\
&\quad	+ C\exp(Ct^M)(1+t^M)\Dt\sum_{m=0}^M \norm{\widetilde{g}_h(t^m)}^2_{H^1}-2\damp\norm{\wh{\bw}_h}_{L^2([0,t^M)\times\dom)}^2.\notag
	\end{align}
	for some constant $C>0$ depending on the domain and the parameters $\epsi$, $\epsii$ but not on $h$ or $\Delta t$.
\end{lemma}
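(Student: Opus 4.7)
The plan is to mimic the time-discrete energy identity displayed after \eqref{eq:bctd} in the fully discrete setting, obtain an inequality of the form $D_t \mathcal{E}_h^m + \damp \|\wh{\bw}_h^m\|^2 \lesssim \mathcal{E}_h^m + (\text{data})$, and then apply discrete Grönwall. The energy will be
\[
\mathcal{E}_h^m := \tfrac{\oneconst}{2}\|\Grad_h \bd_h^m\|_{L^2}^2 + \tfrac{\disp}{2}\|\bw_h^m\|_{L^2}^2 + \|\Grad\vphi_h^m\|_{L^2}^2,
\]
plus a controllable boundary/forcing correction.

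The first block is to test \eqref{seq:wd} (summed against $|\mathcal{C}_{\bi}|$) with $\bw^{m+1/2}_{\bi}$. Using the scalar triple product $(a\times b)\cdot c = a\cdot(b\times c)$ and \eqref{seq:dd} we get
\[
(\Delta_h \dhalf_{\bi}\times \dhalf_{\bi})\cdot \bw^{m+1/2}_{\bi}
=\Delta_h\dhalf_{\bi}\cdot(\dhalf_{\bi}\times \bw^{m+1/2}_{\bi})=\Delta_h\dhalf_{\bi}\cdot D_t\bd^m_{\bi},
\]
and summation by parts (with the homogeneous discrete Neumann BC) turns this into $-\Grad_h^+\dhalf_{\bi}\cdot D_t\Grad_h^+\bd^m_{\bi}$, which telescopes to $-\tfrac{1}{2}D_t|\Grad_h^+\bd^m_{\bi}|^2$. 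The term $\disp D_t\bw^m\cdot\bw^{m+1/2}=\tfrac{\disp}{2}D_t|\bw^m|^2$ is treated similarly, the damping gives the nonnegative dissipation $-\damp|\bw^{m+1/2}_{\bi}|^2$, and the electric coupling term becomes, after the same triple-product manipulation,
\[
\tfrac{\epsi}{2}\bigl[(\Grad\vphi^{m+1}_h\cdot\dhalf)(\Grad\vphi^m_h\cdot D_t\bd^m)+(\Grad\vphi^m_h\cdot\dhalf)(\Grad\vphi^{m+1}_h\cdot D_t\bd^m)\bigr].
\]
Writing $\dhalf=\tfrac12(\bd^m+\bd^{m+1})$ and $D_t\bd^m=(\bd^{m+1}-\bd^m)/\Dt$ and expanding, this is exactly $\tfrac{\epsi}{2\epsii}D_t[\epsii(\bd^m\cdot\Grad\vphi^m_h)^2]$ plus a remainder that pairs $\Grad\vphi^m_h$ with $\Grad\vphi^{m+1}_h$ in a way that can be absorbed into $\mathcal{E}_h^{m+1/2}$ (this is the algebraic identity that the centered discretization \eqref{seq:wd} was designed for). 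Integrating over the grid produces the first three terms of the target identity.

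The second block is to handle the electric energy $\|\Grad\vphi_h\|^2+\epsii(\bd_h\cdot\Grad\vphi_h)^2$. First test \eqref{seq:phid} with $v=u_{0,h}^m=\vphi^m_h-\widetilde{g}_h^m$ to obtain (using $\epsii>-1$ for coercivity of $a_h^m$) the pointwise-in-$t^m$ bound
\[
\|\Grad\vphi_h^m\|_{L^2}^2 \lesssim \|f^m\|_{L^2}^2 + \|\widetilde{g}_h^m\|_{H^1}^2 .
\]
To put the electric energy into a $D_t$-form, apply the discrete time difference $D_t$ to \eqref{seq:phid} and test with $u_{0,h}^{m+1/2}=\vphi^{m+1/2}_h-\widetilde{g}_h^{m+1/2}$. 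The symmetry of $a_h^m$ turns the resulting quantity into $D_t\,a_h^m(\vphi^m,\vphi^m)$ plus commutator terms of the form $D_t(a_h^m)(\vphi^{m+1},\vphi^{m+1})$, i.e., terms involving $D_t\bd_h\cdot\Grad\vphi_h$, together with $D_t F^m(u^{m+1/2}_{0,h})$ and boundary contributions from $D_t\widetilde{g}_h^m$. These are exactly the ``extra'' terms appearing on the right-hand side of the time-discrete energy identity quoted in the paper.

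Combining the two blocks, summing over $m=0,\dots,M-1$, and multiplying by $\Dt$ gives an inequality of the schematic form
\[
\mathcal{E}_h^M + 2\damp\Dt\sum_{m=0}^{M-1}\|\bw^{m+1/2}_h\|^2 \le \mathcal{E}_h^0 + C\,\text{(data terms)} + C\Dt\sum_{m=0}^{M-1}\mathcal{E}_h^m .
\]
Provided $\Dt<\tau$ with $C\tau<1$ so that the $m=M$ term on the right can be absorbed, the discrete Grönwall lemma yields the claimed estimate, with the $\exp(Ct^M)$ factor and the Riemann-sum data terms as stated. The coercivity constant from $\epsii>-1$ enters only in the conversion between $a_h^m(v,v)$ and $\|\Grad v\|_{L^2}^2$.

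The main obstacle is the algebraic handling of the quadratic electric coupling: one must show that the chosen semi-implicit symmetrization in \eqref{seq:wd} reproduces, at the discrete level, the $D_t$-derivative of $\epsii(\bd\cdot\Grad\vphi)^2$ plus a remainder controlled by $\mathcal{E}_h^{m+1/2}$, despite $\epsi\neq\epsii$ in general. Everything else is a careful but routine bookkeeping exercise of summation by parts, Cauchy--Schwarz / Young with a small parameter to isolate gradient norms, and the standard discrete Grönwall argument with the smallness condition $\Dt<\tau$.
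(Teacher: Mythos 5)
Your overall strategy mirrors the paper's: form a discrete energy, use the triple product and the orthogonality built into the scheme to make the wave-map block telescope, couple it to the elliptic equation to make the electric block telescope, then sum and apply discrete Grönwall. The first block (testing \eqref{seq:wd} with $\bw^{m+1/2}$, summation by parts, $\frac12 D_t|\Grad_h\bd^m|^2$ and $\frac{\disp}{2}D_t|\bw^m|^2$) is exactly as in the paper and is fine.

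The genuine gap is in the crucial algebraic step that you yourself flag as the main obstacle. You claim that after writing $\dhalf = \frac12(\bd^m+\bd^{m+1})$ and $D_t\bd^m = (\bd^{m+1}-\bd^m)/\Dt$, the electric coupling term alone "is exactly $\frac{\epsi}{2\epsii}D_t[\epsii(\bd^m\cdot\Grad\vphi^m_h)^2]$ plus a remainder that pairs $\Grad\vphi^m_h$ with $\Grad\vphi^{m+1}_h$ in a way that can be absorbed into $\mathcal{E}_h^{m+1/2}$." That is not what happens. The coupling term by itself does not telescope: the leftover after subtracting $\frac{\epsi}{2}D_t\int(\bd^m_h\cdot\Grad\vphi^m_h)^2$ consists of genuine cross terms of the form $(\bd^m\cdot\Grad\vphi^m)(\bd^m\cdot\Grad\vphi^{m+1})$ and $(\bd^{m+1}\cdot\Grad\vphi^{m+1})(\bd^{m+1}\cdot\Grad\vphi^m)$ divided by $\Dt$, and these are $O(1/\Dt)$, not $O(1)$ — they cannot be ``absorbed into $\mathcal{E}_h^{m+1/2}$.'' They have to be \emph{exactly cancelled} by a companion contribution from the elliptic equation, which is the whole point of the construction.

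Moreover, your proposed way to extract the companion contribution — apply $D_t$ to \eqref{seq:phid} and then test with $u_{0,h}^{m+1/2}$ — produces cross terms with the \emph{opposite} sign to what the cancellation needs. Concretely, if $a^m:=a_h^m(u^m,u^m)$, both your expression and the paper's contain the desired diagonal piece $\frac12[a_h^{m+1}(u^{m+1},u^{m+1})-a_h^m(u^m,u^m)]$, but the paper's choice (average the elliptic equation at levels $m$ and $m+1$ and test with $D_t u^m_{0,h}$, the quantity $a_h^{m+\sfrac12}(u_{0,h},D_t u^m_{0,h})$) produces cross terms $+\frac12 a_h^m(u^m,u^{m+1}) - \frac12 a_h^{m+1}(u^{m+1},u^m)$, while yours gives $-\frac12 a_h^m(u^m,u^{m+1}) + \frac12 a_h^{m+1}(u^{m+1},u^m)$. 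It is precisely the paper's sign that cancels the leftover of the coupling term (after scaling by $-\epsi/\epsii$); with your sign the leftover is \emph{doubled}, not cancelled. So the algebra as you describe it does not close.

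Your observation that testing \eqref{seq:phid} with $u_{0,h}^m$ gives a pointwise-in-$m$ coercivity bound $\|\Grad\vphi_h^m\|_{L^2}^2\lesssim\|f^m\|_{L^2}^2+\|\widetilde g_h^m\|_{H^1}^2$ is correct and could, in principle, replace the paper's $(1+\frac{\epsi}{2\epsii})$-scaling device for converting the sign of the electric energy contribution; but it does not help with the coupling-term cancellation, which is the actual load-bearing identity. To repair the argument, test the \emph{averaged} elliptic equation with $D_t u^m_{0,h}$ (not the $D_t$-difference of the elliptic equation with $u^{m+1/2}_{0,h}$), verify by direct expansion that the resulting terms plus the coupling term equal $\frac{\epsi}{2}D_t\int_\dom(\bd_h^m\cdot\Grad\vphi_h^m)^2\,dx$ exactly, and then handle the boundary/forcing corrections and the closing Grönwall step as you outline.
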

\begin{proof}
	We multiply \eqref{seq:dd} by $-\oneconst\Delta_h \bd^{m+\nicefrac{1}{2}}_{\bi}$, \eqref{seq:wd} by $\bw_{\bi}^{m+\nicefrac{1}{2}}$ and sum over $\bi\in\mathring{\mathcal{I}}_N$:
		\begin{align}
	\label{eq:lhs}
	&\sum_{\bi} \big(-\oneconst\Delta_h \bd_{\bi}^{m+\nicefrac{1}{2}}D_t \bd^m_{\bi} +\disp\bw^{m+\nicefrac{1}{2}}_{\bi} D_t \bw^m_{\bi}\big) =  \frac12 D_t \sum_{\bi}\left(\oneconst|\Grad_h\bd_{\bi}^m|^2+\disp |\bw_{\bi}^m|^2\right)\\
	&= \sum_{\bi} \bigg(  -\oneconst\Delta \dhalf_{\bi}\cdot\left(\dhalf_{\bi}\times \bw^{m+1/2}_{\bi}\right) + \bw^{m+\nicefrac{1}{2}}_{\bi}\cdot \left(\oneconst\Delta_h \dhalf_{\bi}\times \dhalf_{\bi}\right)-\damp|\bw_{\bi}^{m+\nicefrac{1}{2}}|^2\notag\\
	&\hphantom{=\sum_{\bi} \bigg(  -}
	+ \bw^{m+\nicefrac{1}{2}}_{\bi}\cdot\frac{\epsi}{2|\mathcal{C}_{\bi}|}\int_{\mathcal{C}_{\bi}} \bigl[(\Grad\vphi^{m+1}_{h}\cdot \dhalf_{\bi}) \Grad\vphi^m_{h}+ (\Grad\vphi^{m}_{h}\cdot \dhalf_{\bi}) \Grad\vphi^{m+1}_{h} \bigr] dx \times \dhalf_{\bi} \bigg)\notag\\
	&=\sum_{\bi} \bigg(  
\frac{\epsi}{2|\mathcal{C}_{\bi}|}\int_{\mathcal{C}_{\bi}}	\bigl[(\Grad\vphi^{m+1}_{h}\cdot \dhalf_{\bi}) \Grad\vphi^m_{h}+ (\Grad\vphi^{m}_{h}\cdot \dhalf_{\bi}) \Grad\vphi^{m+1}_{h} \bigr] dx\cdot (\dhalf_{\bi}\times\bw^{m+\nicefrac{1}{2}}_{\bi})
	-\damp |\bw^{m+\nicefrac{1}{2}}_{\bi}|^2 \bigg)\notag\\
		&=\sum_{\bi} \bigg(  
	\frac{\epsi}{2|\mathcal{C}_{\bi}|}\int_{\mathcal{C}_{\bi}}	\bigl[(\Grad\vphi^{m+1}_{h}\cdot \dhalf_{\bi}) \Grad\vphi^m_{h}+ (\Grad\vphi^{m}_{h}\cdot \dhalf_{\bi}) \Grad\vphi^{m+1}_{h} \bigr] dx\cdot D_t \bd^m_{\bi}
	-\damp |\bw^{m+\nicefrac{1}{2}}_{\bi}|^2 \bigg)\notag
	\end{align}
	where we have used the vector identity $A\cdot (B\times C)=B\cdot (C\times A) =C\cdot (A\times B)$ and in addition equation \eqref{seq:dd} for the last equality.
	Next, we take the average of equation~\eqref{seq:phid} at time step $m$ and $m+1$ and use $D_t u^m_{0,h}\in V_h$ as a test function	
	\begin{equation*}
	a_h^{m+\sfrac{1}{2}}(u_{0,h},D_t u_{0,h}^m)=F^{m+\sfrac{1}{2}}(D_t u_{0,h}^m)-a_h^{m+\sfrac{1}{2}}(\widetilde{g}_h,D_t u_{0,h}^m),
	\end{equation*}
	where we used $m+\sfrac{1}{2}$ to denote the averages of the respective quantities at time levels $m$ and $m+1$. We can rewrite this as
	\begin{equation*}
	a_h^{m+\sfrac{1}{2}}(\vphi_h,D_t(\vphi^m_h-\widetilde{g}_h^{m})) = F^{m+\sfrac{1}{2}}(D_t (\vphi^m_h-\widetilde{g}_h^{m})),
	\end{equation*}
	since $u_{0,h}=\vphi_h-\widetilde{g}_h$. Inserting the definitions of the linear and bilinear forms $F^m$ and $a^m$, this is in fact
	\begin{multline}\label{eq:elliptic}
	\frac12 D_t\int_{\dom}|\Grad \vphi_h^m|^2 dx - \int_{\dom} \Grad\vphi_h^{m+\sfrac{1}{2}}  D_t \Grad\widetilde{g}_h^m dx\\
	 + \frac{\epsii}{2}\int_{\dom} (\bd_h^m\cdot \Grad\vphi_h^m)(\bd_h^m\cdot\Grad D_t\vphi_h^m) dx + \frac{\epsii}{2}\int_{\dom} (\bd_h^{m+1}\cdot \Grad\vphi_h^{m+1})(\bd_h^{m+1}\cdot\Grad D_t\vphi_h^m) dx\\
	-\frac{\epsii}{2}\int_{\dom} (\bd_h^m\cdot \Grad\vphi_h^m)(\bd_h^m\cdot D_t\Grad\widetilde{g}_h^m) dx  -\frac{\epsii}{2}\int_{\dom} (\bd_h^{m+1}\cdot \Grad\vphi_h^{m+1})(\bd_h^{m+1}\cdot D_t\Grad\widetilde{g}_h^m) dx\\
	= \frac12 D_t\int_{\dom} f(t^m)\vphi_h^m dx - \int_{\dom} D_t f(t^m)\vphi_h^{m+\sfrac{1}{2}} dx -\int_{\dom}\frac12(f(t^m)+f(t^{m+1}))  D_t\widetilde{g}_h^m dx.
	\end{multline}
	We multiply this identity by $-\epsi/\epsii$ and multiply~\eqref{eq:lhs} by $h^n$, use the definitions~\eqref{eq:pwconstd}-\eqref{eq:whhat}, and then add them up:
	\begin{multline}\label{eq:monster}
	\frac{1}{2} D_t \int_{\dom}\left(\oneconst|\Grad_h\bd_{h}^m|^2+\disp |\bw_{h}^m|^2-\frac{\epsi}{\epsii}|\Grad\vphi_h^m|^2\right) dx \\
	= -\damp\int_{\dom}|{\bw}_{h}^{m+\sfrac{1}{2}}|^2  dx +
	\frac{\epsi}{2}\int_{\dom}	\bigl[(\Grad\vphi^{m+1}_{h}\cdot \dhalf_{h}) \Grad\vphi^m_{h}+ (\Grad\vphi^{m}_{h}\cdot \dhalf_{h}) \Grad\vphi^{m+1}_{h}  \bigr]  D_t \bd^m_{h} dx \\
	+\frac{\epsi}{2}\int_{\dom} (\bd_h^m\cdot \Grad\vphi_h^m)(\bd_h^m\cdot\Grad D_t\vphi_h^m) dx + \frac{\epsi}{2}\int_{\dom} (\bd_h^{m+1}\cdot \Grad\vphi_h^{m+1})(\bd_h^{m+1}\cdot\Grad D_t\vphi_h^m) dx\\
	-\frac{\epsi}{2}\int_{\dom} (\bd_h^m\cdot \Grad\vphi_h^m)(\bd_h^m\cdot D_t\Grad\widetilde{g}_h^m) dx
	  -\frac{\epsi}{2}\int_{\dom} (\bd_h^{m+1}\cdot \Grad\vphi_h^{m+1})(\bd_h^{m+1}\cdot D_t\Grad\widetilde{g}_h^m) dx	\\
		- \frac{\epsi}{\epsii}\int_{\dom} \Grad\vphi_h^{m+\sfrac{1}{2}}  D_t \Grad\widetilde{g}_h^m dx	 -\frac{\epsi}{2\epsii} D_t\int_{\dom} f(t^m)\vphi_h^m dx\\
		+ \frac{\epsi}{\epsii}\int_{\dom} D_t f(t^m)\vphi_h^{m+\sfrac{1}{2}} dx +\frac{\epsi}{2\epsii}\int_{\dom}(f(t^m)+f(t^{m+1}))  D_t\widetilde{g}_h^m dx
	\end{multline}
	Manipulating the terms in the middle, we see:
	\begin{multline*}
		\frac{\epsi}{2}\int_{\dom}	\bigl[(\Grad\vphi^{m+1}_{h}\cdot \dhalf_{h}) \Grad\vphi^m_{h}+ (\Grad\vphi^{m}_{h}\cdot \dhalf_{h}) \Grad\vphi^{m+1}_{h}  \bigr]  D_t \bd^m_{h} dx \\
	+ \frac{\epsi}{2}\int_{\dom} (\bd_h^m\cdot \Grad\vphi_h^m)(\bd_h^m\cdot\Grad D_t\vphi_h^m) dx + \frac{\epsi}{2}\int_{\dom} (\bd_h^{m+1}\cdot \Grad\vphi_h^{m+1})(\bd_h^{m+1}\cdot\Grad D_t\vphi_h^m) dx\\
		=	\frac{\epsi}{4\Delta t}\int_{\dom}	\bigl[(\Grad\vphi^{m+1}_{h}\cdot \bd^m_{h}) (\Grad\vphi^m_{h}\cdot\bd^{m+1}_{h})+ (\Grad\vphi^{m}_{h}\cdot \bd^m_{h}) (\Grad\vphi^{m+1}_{h} \cdot\bd^{m+1}_{h}) \bigr]   dx \\
+\frac{\epsi}{4\Delta t}\int_{\dom}	\bigl[(\Grad\vphi^{m+1}_{h}\cdot \bd^{m+1}_{h}) (\Grad\vphi^m_{h}\cdot\bd^{m+1}_{h})+ (\Grad\vphi^{m}_{h}\cdot \bd^{m+1}_{h}) (\Grad\vphi^{m+1}_{h} \cdot\bd^{m+1}_{h}) \bigr]   dx \\
	-	\frac{\epsi}{4\Delta t}\int_{\dom}	\bigl[(\Grad\vphi^{m+1}_{h}\cdot \bd^m_{h}) (\Grad\vphi^m_{h}\cdot\bd^{m}_{h})+ (\Grad\vphi^{m}_{h}\cdot \bd^m_{h}) (\Grad\vphi^{m+1}_{h} \cdot\bd^{m}_{h}) \bigr]   dx\\
	-	\frac{\epsi}{4\Delta t}\int_{\dom}	\bigl[(\Grad\vphi^{m+1}_{h}\cdot \bd^{m+1}_{h}) (\Grad\vphi^m_{h}\cdot\bd^{m}_{h})+ (\Grad\vphi^{m}_{h}\cdot \bd^{m+1}_{h}) (\Grad\vphi^{m+1}_{h} \cdot\bd^{m}_{h}) \bigr]   dx\\
	+\frac{\epsi}{2\Delta t}\int_{\dom} (\bd_h^m\cdot \Grad\vphi_h^m)(\bd_h^m\cdot\Grad \vphi_h^{m+1}) dx 
	-\frac{\epsi}{2\Delta t}\int_{\dom} (\bd_h^m\cdot \Grad\vphi_h^m)(\bd_h^m\cdot\Grad \vphi_h^m) dx \\
		+ \frac{\epsi}{2\Delta t}\int_{\dom} (\bd_h^{m+1}\cdot \Grad\vphi_h^{m+1})(\bd_h^{m+1}\cdot\Grad \vphi_h^{m+1}) dx
	- \frac{\epsi}{2\Delta t}\int_{\dom} (\bd_h^{m+1}\cdot \Grad\vphi_h^{m+1})(\bd_h^{m+1}\cdot\Grad \vphi_h^m) dx\\
	=\frac{\epsi}{2}D_t\int_{\dom} (\bd_h^m\cdot \Grad\vphi_h^m)^2 dx. 
	\end{multline*}
	Hence,~\eqref{eq:monster} becomes
	\begin{multline}\label{eq:monster2}
	\frac{1}{2} D_t \int_{\dom}\left(\oneconst|\Grad_h\bd_{h}^m|^2+\disp |\bw_{h}^m|^2-\frac{\epsi}{\epsii}|\Grad\vphi_h^m|^2-\epsi (\bd^m_h\cdot\Grad\vphi_h^m)^2+\frac{\epsi}{\epsii}  f(t^m)\vphi_h^m \right) dx \\
	= -\damp\int_{\dom}|{\bw}_{h}^{m+\sfrac{1}{2}}|^2  dx	- \frac{\epsi}{\epsii}\int_{\dom} \Grad\vphi_h^{m+\sfrac{1}{2}}  D_t \Grad\widetilde{g}_h^m dx \\
	-\frac{\epsi}{2}\int_{\dom} (\bd_h^m\cdot \Grad\vphi_h^m)(\bd_h^m\cdot D_t\Grad\widetilde{g}_h^m) dx
	-\frac{\epsi}{2}\int_{\dom} (\bd_h^{m+1}\cdot \Grad\vphi_h^{m+1})(\bd_h^{m+1}\cdot D_t\Grad\widetilde{g}_h^m) dx	\\
	+\frac{\epsi}{\epsii}\int_{\dom} D_t f(t^m)\vphi_h^{m+\sfrac{1}{2}} dx +\frac{\epsi}{2\epsii}\int_{\dom}(f(t^m)+f(t^{m+1}))  D_t\widetilde{g}_h^m dx.
	\end{multline}
	Next, we use $u^m_{0,h}=\vphi^m_h-\widetilde{g}^m_h\in V_h$ as a test function in~\eqref{seq:phid}:
	\begin{equation*}
	a_h^m(\vphi_h^m,\vphi_h^m-\widetilde{g}^m_h) = F^m(\vphi_h^m-\widetilde{g}^m_h),
	\end{equation*}
	which is
	\begin{multline*}
	\int_{\dom}\left[|\Grad\vphi^m_h|^2+\epsii (\bd^m_h\cdot\Grad\vphi_h^m)^2\right] dx - \int_{\dom} (\Grad\vphi_h^m+\epsii(\bd_h^m\cdot\Grad\vphi_h^m)\bd_h^m)\cdot\Grad\widetilde{g}^m_h dx \\
	=\int_{\dom}f(t^m)\vphi_h^m dx -\int_{\dom}f(t^m)\widetilde{g}^m_h dx.
	\end{multline*}
	Taking the time difference $D_t$ of this identity and multiplying by $(1+\epsi/(2\epsii))$, we have
		\begin{multline*}
	\left(1+\frac{\epsi}{2\epsii}\right)D_t\int_{\dom}\left[|\Grad\vphi^m_h|^2+\epsii (\bd^m_h\cdot\Grad\vphi_h^m)^2\right] dx\\
	 - \left(1+\frac{\epsi}{2\epsii}\right)D_t\int_{\dom} (\Grad\vphi_h^m+\epsii(\bd_h^m\cdot\Grad\vphi_h^m)\bd_h^m)\cdot\Grad\widetilde{g}^m_h dx\\ =\left(1+\frac{\epsi}{2\epsii}\right)D_t\int_{\dom}f(t^m)\vphi_h^m dx -\left(1+\frac{\epsi}{2\epsii}\right)D_t\int_{\dom}f(t^m)\widetilde{g}^m_h dx.
	\end{multline*}
	We add this to~\eqref{eq:monster2}:
		\begin{multline}\label{eq:monster3}
	\frac{1}{2} D_t \int_{\dom}\left(\oneconst|\Grad_h\bd_{h}^m|^2+\disp |\bw_{h}^m|^2+2|\Grad\vphi_h^m|^2+2\epsii (\bd^m_h\cdot\Grad\vphi_h^m)^2-2 f(t^m)\vphi_h^m \right) dx \\
	= -\damp\int_{\dom}|{\bw}_{h}^{m+\sfrac{1}{2}}|^2  dx	- \frac{\epsi}{\epsii}\int_{\dom} \Grad\vphi_h^{m+\sfrac{1}{2}}  D_t \Grad\widetilde{g}_h^m dx \\
	-\frac{\epsi}{2}\int_{\dom} (\bd_h^m\cdot \Grad\vphi_h^m)(\bd_h^m\cdot D_t\Grad\widetilde{g}_h^m) dx
	-\frac{\epsi}{2}\int_{\dom} (\bd_h^{m+1}\cdot \Grad\vphi_h^{m+1})(\bd_h^{m+1}\cdot D_t\Grad\widetilde{g}_h^m) dx	\\
	+ \frac{\epsi}{\epsii}\int_{\dom} D_t f(t^m)\vphi_h^{m+\sfrac{1}{2}} dx -\frac{\epsi}{\epsii}\int_{\dom}D_t f(t^m) \widetilde{g}_h^{m+\sfrac{1}{2}} dx\\
	 + \left(1+\frac{\epsi}{2\epsii}\right)D_t\int_{\dom} (\Grad\vphi_h^m+\epsii(\bd_h^m\cdot\Grad\vphi_h^m)\bd_h^m)\cdot\Grad\widetilde{g}^m_h dx
	 -D_t\int_{\dom}f(t^m)\widetilde{g}^m_h dx
	\end{multline}
	Now, we multiply the last identity by $\Delta t$ and sum over $m=0,1,2,\dots, M-1$:
	
		\begin{align*}
	&\frac{1}{2}  \int_{\dom}\left(\oneconst|\Grad_h\bd_{h}^M|^2+\disp |\bw_{h}^M|^2+2|\Grad\vphi_h^M|^2+2\epsii (\bd^M_h\cdot\Grad\vphi_h^M)^2-2 f(t^M)\vphi_h^M \right) dx \\
	&-	 \left(1+\frac{\epsi}{2\epsii}\right)\int_{\dom} (\Grad\vphi_h^M+\epsii(\bd_h^M\cdot\Grad\vphi_h^M)\bd_h^M)\cdot\Grad\widetilde{g}^M_h dx\\
	&= 	\frac{1}{2}  \int_{\dom}\left(\oneconst|\Grad_h\bd_{h}^0|^2+\disp |\bw_{h}^0|^2+2|\Grad\vphi_h^0|^2+2\epsii (\bd^0_h\cdot\Grad\vphi_h^0)^2-2 f(0)\vphi_h^0 \right) dx\\
	&-\Dt\damp\sum_{m=0}^{M-1}\int_{\dom}|{\bw}_{h}^{m+\sfrac{1}{2}}|^2  dx	- \frac{\epsi}{\epsii}\Dt\sum_{m=0}^{M-1}\int_{\dom} \Grad\vphi_h^{m+\sfrac{1}{2}}  D_t \Grad\widetilde{g}_h^m dx \\
&	-\frac{\epsi}{2}\Dt\sum_{m=0}^{M-1}\int_{\dom} (\bd_h^m\cdot \Grad\vphi_h^m)(\bd_h^m\cdot D_t\Grad\widetilde{g}_h^m) dx
	-\frac{\epsi}{2}\Dt\sum_{m=0}^{M-1}\int_{\dom} (\bd_h^{m+1}\cdot \Grad\vphi_h^{m+1})(\bd_h^{m+1}\cdot D_t\Grad\widetilde{g}_h^m) dx	\\
&	+ \frac{\epsi}{\epsii}\Dt\sum_{m=0}^{M-1}\int_{\dom} D_t f(t^m)\vphi_h^{m+\sfrac{1}{2}} dx -\frac{\epsi}{\epsii}\Dt\sum_{m=0}^{M-1}\int_{\dom}D_t f(t^m) \widetilde{g}_h^{m+\sfrac{1}{2}} dx\\
&	-\int_{\dom}f(t^M)\widetilde{g}^M_h dx+ \left(1+\frac{\epsi}{2\epsii}\right)\int_{\dom} (\Grad\vphi_h^0+\epsii(\bd_h^0\cdot\Grad\vphi_h^0)\bd_h^0)\cdot\Grad\widetilde{g}^0_h dx
	+\int_{\dom}f(0)\widetilde{g}^0_h dx.
	\end{align*}
	We use H\"{o}lder and Cauchy-Schwarz inequality and the constraint $|\bd^m_h|=1$ a couple of times to estimate the right hand side:
		\begin{align*}
	&\frac{1}{2}  \int_{\dom}\left(\oneconst|\Grad_h\bd_{h}^M|^2+\disp |\bw_{h}^M|^2+2|\Grad\vphi_h^M|^2+2\epsii (\bd^M_h\cdot\Grad\vphi_h^M)^2-2 f(t^M)\vphi_h^M \right) dx \\
	&-	 \left(1+\frac{\epsi}{2\epsii}\right)\int_{\dom} (\Grad\vphi_h^M+\epsii(\bd_h^M\cdot\Grad\vphi_h^M)\bd_h^M)\cdot\Grad\widetilde{g}^M_h dx\\
	&\leq 	\frac{1}{2}  \left(\oneconst\norm{\Grad\bd_0}^2_{L^2(\dom)}+\disp \norm{\bw_0}_{L^2(\dom)}^2+2(1+\max\{0,\epsii\})\norm{\Grad\vphi_h^0}^2_{L^2(\dom)}+\norm{f(0)}_{L^2}^2+\norm{\vphi_h^0}_{L^2}^2\right)\\
	&-\Dt\damp\sum_{m=0}^{M-1}\int_{\dom}|{\bw}_{h}^{m+\sfrac{1}{2}}|^2  dx	+ \left(\left|\frac{\epsi}{\epsii}\right|+|\epsi|\right)\Dt\left(\sum_{m=0}^{M}\norm{\Grad\vphi_h^m}_{L^2}^2+\sum_{m=0}^{M-1}\norm{ D_t \Grad\widetilde{g}_h^m }_{L^2}^2\right) \\
	&	+\left|\frac{\epsi}{\epsii}\right|\Dt\left(\sum_{m=0}^{M-1}\norm{D_t f(t^m)}_{L^2}^2+ \frac12\sum_{m=0}^{M}\norm{\widetilde{g}_h^{m}}_{L^2}^2+\frac12\sum_{m=0}^{M}\norm{\vphi_h^{m}}_{L^2}^2\right)\\
	&	+\frac12\norm{f(t^M)}_{L^2}^2+\frac12\norm{\widetilde{g}^M_h}_{L^2}^2+ \left(\frac12+\left|\frac{\epsi}{4\epsii}\right|\right)\left(\norm{\Grad\vphi_h^0}_{L^2}^2+\norm{\Grad\widetilde{g}^0_h}_{L^2}^2\right)
	+\frac12\norm{f(0)}_{L^2}^2+\frac12\norm{\widetilde{g}^0_h}^2_{L^2}\\
	&\leq 
		\frac{\oneconst}{2}\norm{\Grad\bd_0}^2_{L^2(\dom)}+\frac{\disp}{2} \norm{\bw_0}_{L^2(\dom)}^2+
		C\norm{\Grad\vphi_h^0}^2_{L^2(\dom)}+C\norm{f(0)}_{L^2}^2+\frac12\norm{\vphi_h^0}_{L^2}^2+C\norm{\widetilde{g}_h^0}_{H^1}^2\\
	&-\Dt\damp\sum_{m=0}^{M-1}\int_{\dom}|{\bw}_{h}^{m+\sfrac{1}{2}}|^2  dx	+ \left(\left|\frac{\epsi}{\epsii}\right|+|\epsi|\right)\Dt\left(\sum_{m=0}^{M}\norm{\Grad\vphi_h^m}_{L^2}^2+\sum_{m=0}^{M-1}\norm{ D_t \Grad\widetilde{g}_h^m }_{L^2}^2\right) \\
	&	+\left|\frac{\epsi}{\epsii}\right|\Dt\left(\sum_{m=0}^{M-1}\norm{D_t f(t^m)}_{L^2}^2+ \frac12\sum_{m=0}^{M}\norm{\widetilde{g}_h^{m}}_{L^2}^2+\frac12\sum_{m=0}^{M}\norm{\vphi_h^{m}}_{L^2}^2\right)\\
	&	+\frac12\norm{f(t^M)}_{L^2}^2+\frac12\norm{\widetilde{g}^M_h}_{L^2}^2.
	\end{align*}
	Using Poincar\'e's inequality, this becomes
		\begin{align}\label{eq:monster4}
	&\frac{1}{2}  \int_{\dom}\left(\oneconst|\Grad_h\bd_{h}^M|^2+\disp |\bw_{h}^M|^2+2|\Grad\vphi_h^M|^2+2\epsii (\bd^M_h\cdot\Grad\vphi_h^M)^2-2 f(t^M)\vphi_h^M \right) dx \\
	&-	 \left(1+\frac{\epsi}{2\epsii}\right)\int_{\dom} (\Grad\vphi_h^M+\epsii(\bd_h^M\cdot\Grad\vphi_h^M)\bd_h^M)\cdot\Grad\widetilde{g}^M_h dx\notag\\
	&\leq 	C\left(\norm{\Grad\bd_0}^2_{L^2(\dom)}+ \norm{\bw_0}_{L^2(\dom)}^2+
	\norm{\Grad\vphi_h^0}^2_{L^2(\dom)}+\norm{f(0)}_{L^2}^2+\norm{\widetilde{g}_h^0}^2_{H^1(\dom)}\right)\notag\\
	&-\Dt\damp\sum_{m=0}^{M-1}\int_{\dom}|{\bw}_{h}^{m+\sfrac{1}{2}}|^2  dx	+ C\Dt\left(\sum_{m=0}^{M}\norm{\Grad\vphi_h^m}_{L^2}^2\right)+\frac12 \norm{\widetilde{g}^M_h}_{L^2(\dom)}^2\notag \\
	&	+C\Dt\left(\sum_{m=0}^{M-1}\norm{D_t f(t^m)}_{L^2}^2+ \sum_{m=0}^{M}\norm{\widetilde{g}_h^{m}}_{H^1}^2+\sum_{m=0}^{M-1}\norm{ D_t \Grad\widetilde{g}_h^m }_{L^2}^2\right)+\frac12\norm{f(t^M)}_{L^2}^2.\notag	
	\end{align}
	Now for the left hand side, we have
	\begin{equation*}
	\int_{\dom}\left(|\Grad\vphi_h^M|^2+\epsii (\bd^M_h\cdot\Grad\vphi_h^M)^2\right) dx\geq (1+\min\{0,\epsii\})\norm{\Grad\vphi_h^M}_{L^2}^2,
	\end{equation*}
	and, using Poincar\'e's inequality for $u_{0,h}$, then for arbitrary $\alpha_1,\alpha_2>0$,
	\begin{equation*}
	\begin{split}
	\left|\int_{\dom} f(t^M)\vphi^M_h dx\right|&\leq C(\dom)\norm{f(t^M)}_{L^2}\left(\norm{\widetilde{g}^M_h}_{H^1}+\norm{\Grad\vphi^M_h}_{L^2}\right)\\
	& \leq  \frac{C(\dom)^2}{2\alpha_1}\norm{f(t^M)}_{L^2}^2+\frac{\alpha_1}{2}\norm{\Grad\vphi^M_h}^2_{L^2}+C(\dom)\norm{f(t^M)}_{L^2}\norm{\widetilde{g}^M_h}_{H^1}
	\end{split}
	\end{equation*}
	and
	\begin{equation*}
	\begin{split}
\left|\left(1+\frac{\epsi}{2\epsii}\right)\int_{\dom} (\Grad\vphi_h^M+\epsii(\bd_h^M\cdot\Grad\vphi_h^M)\bd_h^M)\cdot\Grad\widetilde{g}^M_h dx\right|	& \leq C(\epsi,\epsii)\norm{\Grad\vphi_h^M}_{L^2}\norm{\Grad\widetilde{g}^M_h}_{L^2}\\
	&\leq \frac{\alpha_2}{2}\norm{\Grad\vphi_h^M}_{L^2}^2+\frac{C(\epsi,\epsii)^2}{2\alpha_2}\norm{\Grad\widetilde{g}^M_h}_{L^2}^2
	\end{split}
	\end{equation*}
	Picking $\alpha_1=\alpha_2= 0.5(1+\min\{0,\epsii\})$, we can estimate the left hand side of~\eqref{eq:monster4}
\begin{align}
\label{eq:forfaen}
&\frac{1}{2}  \int_{\dom}\left(\oneconst|\Grad_h\bd_{h}^M|^2+\disp |\bw_{h}^M|^2+2|\Grad\vphi_h^M|^2+2\epsii (\bd^M_h\cdot\Grad\vphi_h^M)^2-2 f(t^M)\vphi_h^M \right) dx \\
&\qquad-	 \left(1+\frac{\epsi}{2\epsii}\right)\int_{\dom} (\Grad\vphi_h^M+\epsii(\bd_h^M\cdot\Grad\vphi_h^M)\bd_h^M)\cdot\Grad\widetilde{g}^M_h dx\notag\\
&\geq \frac{\oneconst}{2}\norm{\Grad_h\bd_h^M}_{L^2}^2+\frac{\disp}{2}\norm{\bw^M_h}_{L^2}^2+\frac{1+\min\{0,\epsii\}}{2}\norm{\Grad\vphi^M_h}_{L^2}^2-C(\dom)\norm{f(t^M)}_{L^2}\norm{\widetilde{g}^M_h}_{H^1}\notag\\
&\qquad-\frac{C(\dom)^2}{1+\min\{0,\epsii\}}\norm{f(t^M)}_{L^2}^2 -\frac{C(\epsi,\epsii)^2}{1+\min\{0,\epsii\}}\norm{\Grad\widetilde{g}^M_h}_{L^2}^2.\notag
\end{align}
	Combining this with~\eqref{eq:monster4}, we find
		\begin{align}\label{eq:monster5}
	&\frac{\oneconst}{2}\norm{\Grad_h\bd_h^M}_{L^2}^2+\frac{\disp}{2}\norm{\bw^M_h}_{L^2}^2+\frac{1+\min\{0,\epsii\}}{2}\norm{\Grad\vphi^M_h}_{L^2}^2\\
	&\leq
	C(\dom)\norm{f(t^M)}_{L^2}\norm{\widetilde{g}^M_h}_{H^1}+\frac12\norm{f(t^M)}_{L^2}^2+\frac12\norm{\widetilde{g}^M_h}_{L^2(\dom)}^2\notag\\
	&+\frac{C(\dom)^2}{1+\min\{0,\epsii\}}\norm{f(t^M)}_{L^2}^2 +\frac{C(\epsi,\epsii)^2}{1+\min\{0,\epsii\}}\norm{\Grad\widetilde{g}^M_h}_{L^2}^2\notag\\
	&+	C\left(\norm{\Grad\bd_0}^2_{L^2(\dom)}+ \norm{\bw_0}_{L^2(\dom)}^2+
	\norm{\Grad\vphi_h^0}^2_{L^2(\dom)}+\norm{f(0)}_{L^2}^2+\norm{\widetilde{g}_h^0}^2_{H^1(\dom)}\right)\notag\\
	&-\Dt\damp\sum_{m=0}^{M-1}\int_{\dom}|{\bw}_{h}^{m+\sfrac{1}{2}}|^2  dx	+ C\Dt\left(\sum_{m=0}^{M}\norm{\Grad\vphi_h^m}_{L^2}^2\right)\notag \\
	&	+C\Dt\left(\sum_{m=0}^{M-1}\norm{D_t f(t^m)}_{L^2}^2+ \sum_{m=0}^{M}\norm{\widetilde{g}_h^{m}}_{H^1}^2+\sum_{m=0}^{M-1}\norm{ D_t \Grad\widetilde{g}_h^m }_{L^2}^2\right).\notag	
	\end{align}
	This yields an estimate of the form
	\begin{equation*}
	V(t^M)\leq A(t^M)+\Dt\sum_{m=0}^{M-1} B(t^m)V(t^m),
	\end{equation*}
	where
	\begin{equation*}
	V(t^m)=\frac{\oneconst}{2}\norm{\Grad_h\bd_h^M}_{L^2}^2+\frac{\disp}{2}\norm{\bw^M_h}_{L^2}^2+
	\norm{\Grad\vphi^M_h}_{L^2}^2,
	\end{equation*}
	and
	\begin{equation*}
	B(t^m)= \frac{2C}{\min\{1+\min\{0,\epsii\}-2C\Dt,\sfrac{1}{2}\}}
	\end{equation*}
	(assume $\Dt\leq 1+\min\{0,\epsii\}/(2C)$)
	and 
	\begin{align*}
	A(t^M)&=	C(\dom,\epsi,\epsii)\norm{\widetilde{g}^M_h}_{H^1}^2+C(\dom,\epsi,\epsii)\norm{f(t^M)}_{L^2}^2\\
	&+	C\left(\norm{\Grad\bd_0}^2_{L^2(\dom)}+ \norm{\bw_0}_{L^2(\dom)}^2+
	\norm{\Grad\vphi_h^0}^2_{L^2(\dom)}+\norm{f(0)}_{L^2}^2+\norm{\widetilde{g}_h^0}^2_{H^1(\dom)}\right)\\
	&	+C\Dt\left(\sum_{m=0}^{M-1}\norm{D_t f(t^m)}_{L^2}^2+ \sum_{m=0}^{M}\norm{\widetilde{g}_h^{m}}_{H^1}^2+\sum_{m=0}^{M-1}\norm{ D_t \Grad\widetilde{g}_h^m }_{L^2}^2\right).	
	\end{align*}
	Using the `discrete Gr\"onwall inequality' (e.g.~\cite{Holte2009}),
	\begin{equation}\label{eq:Gronwalldisc}
	V(t^M)\leq A(t^M)+\Delta t\sum_{m=0}^{M-1}A(t^m) B(t^m)\exp\left(\Delta t\sum_{k=m+1}^{M-1}B(t^k)\right),
	\end{equation}
	we obtain
	\begin{multline*}
	\frac{\oneconst}{2}\norm{\Grad_h\bd_h^M}_{L^2}^2+\frac{\disp}{2}\norm{\bw^M_h}_{L^2}^2+
	\norm{\Grad\vphi^M_h}_{L^2}^2\\
	 \leq A(t^M)+\frac{2C\Dt}{\min\{1+\min\{0,\epsii\}-2C\Dt,\sfrac{1}{2}\}}\sum_{m=0}^{M-1}A(t^m)\exp\left(\frac{2C\Dt (M-1-m)}{\min\{1+\min\{0,\epsii\}-2C\Dt,\sfrac{1}{2}\}}\right)\\
	\leq A(t^M)+C\exp(C t^M)\Dt\sum_{m=0}^{M-1}A(t^m),
	\end{multline*}
	with the expression for $A(t^m)$ given above. Hence we obtain the estimate
	\begin{multline*}
		\frac{\oneconst}{2}\norm{\Grad_h\bd_h^M}_{L^2}^2+\frac{\disp}{2}\norm{\bw^M_h}_{L^2}^2+
	\norm{\Grad\vphi^M_h}_{L^2}^2\\
	\leq	
	C\left(\norm{\widetilde{g}^M_h}_{H^1}^2+\norm{f(t^M)}_{L^2}^2\right)+C\exp(Ct^M)\Dt\sum_{m=0}^{M-1}\norm{f(t^m)}_{L^2}^2\\
	+C(t^M\exp(Ct^M)+1)\left(\norm{\Grad\bd_0}^2_{L^2(\dom)}+ \norm{\bw_0}_{L^2(\dom)}^2
	+	\norm{\Grad\vphi_h^0}^2_{L^2(\dom)}+\norm{f(0)}_{L^2}^2+\norm{\widetilde{g}_h^0}^2_{H^1(\dom)}\right)\\
	+C(1+t^M\exp(C t^M))\Dt\left(\sum_{m=0}^{M-1}\norm{D_t f(t^m)}_{L^2}^2+\sum_{m=0}^{M-1}\norm{D_t\Grad\widetilde{g}^m_h}^2_{L^2}\right)\\
		+ C\exp(Ct^M)(1+t^M)\Dt\sum_{m=0}^M \norm{\widetilde{g}^m_h}^2_{H^1}.
	\end{multline*}
Comparing~\eqref{eq:monster5}, we also obtain the same bound on the $L^2_{t,x}$-norm of $\wh{\bw}_h$.
which proves the result.
\end{proof}
\section{Convergence}\label{sec:conv}
Using the discrete energy estimate~\eqref{eq:discreteenergyestimate}, we now proceed to showing convergence of the scheme. We assume that the initial data satisfies
\begin{equation}\label{eq:initdata}
\bd_0\in H^1(\dom),\quad |\bd_0(x)|=1,\quad \text{a.e.}\, x\in \dom, \quad\partial_t\bd(0,\cdot)=\bd_1\in L^2(\dom),
\end{equation}
and define $\bw_0:=\bd_1\times\bd_0$ and set $\bw_h(0,\cdot)=\bw_0$. This implies for the piecewise constant functions~\eqref{eq:pwconstd}--\eqref{eq:whhat}
\begin{equation}\label{eq:discreteinit}
\norm{\bd_h(0,\cdot)}_{H^1(\dom)}\leq C, \quad \norm{\bw_h(0,\cdot)}_{L^2(\dom)}\leq C,\quad |\bd_h(0,x)|=1,\quad x\in \dom.
\end{equation}
Moreover, we assume that
\begin{equation}
\label{eq:CFL}
\Dt\leq C h,\quad \text{for some constant}\, C>0. 
\end{equation}
Then we can show
\begin{theorem}
	Assume $\oneconst>0$, $\disp>0$ and $\damp\geq 0$, and $\epsii>-1$. Assume $f\in W^{1,2}([0,T];L^2(\dom))$ and $g\in C^1([0,T]\times\partial\dom)$ which can be extended to a function $\widetilde{g}\in L^\infty(0,T;H^1(\dom))\cap W^{1,2}([0,T];H^1(\dom))$. Moreover, assume that the initial data $(\bd_0,\bw_0)$ satisfies~\eqref{eq:initdata} and that the time step $\Dt$ and the grid size $h$ are related by~\eqref{eq:CFL}. Then, as $h,\Dt\to 0$, the approximations $(\bd_h,\bw_h,\vphi_h)$ as defined in~\eqref{eq:pwconstd}--\eqref{eq:pwconstphi} converge up to a subsequence to a weak solution of~\eqref{eq:model1} as in Definition~\ref{def:weaksol}. 
\end{theorem}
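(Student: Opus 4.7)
The plan is to leverage the discrete energy estimate from Lemma~\ref{lem:discreteenergy} to extract uniform bounds, obtain compactness, and pass to the limit in the weak formulation. First, from \eqref{eq:discreteenergyestimate} together with the assumed regularity of $f$ and $\widetilde g$, we obtain uniform bounds (in $h,\Dt$) on $\bd_h$ and $\wh{\bd}_h$ in $L^\infty(0,T;H^1(\dom))$, on $\bw_h$ and $\wh{\bw}_h$ in $L^\infty(0,T;L^2(\dom))$, and on $\vphi_h$ in $L^\infty(0,T;H^1(\dom))$. The constraint preservation from Lemma~\ref{lem:constraint} also yields $\|\bd_h\|_{L^\infty}\le 1$, and via~\eqref{seq:dd} we get $D_t\bd^m_{\bi}=\dhalf_{\bi}\times\bw^{m+1/2}_{\bi}$ bounded in $L^\infty(L^2)$, hence a uniform time-regularity estimate on $\bd_h$. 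These bounds yield, along a subsequence, weak-$*$ limits $\bd\in L^\infty(H^1)$, $\bw\in L^\infty(L^2)$, $\vphi\in L^\infty(H^1)$, together with $\bd_t=\bd\times\bw$ in the sense of distributions and $|\bd|=1$ a.e. once we have strong convergence of $\bd_h$.

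Next, I would establish strong convergence of $\bd_h$ in $L^2((0,T)\times\dom)$ (and a.e., along a further subsequence) via a discrete Aubin--Lions argument analogous to~\cite{Karper2014}: the uniform $H^1$-bound in space combined with the time-difference estimate $\|\bd_h(\cdot+\tau)-\bd_h(\cdot)\|_{L^2}\le C\tau^{1/2}$ obtained from $\bd_t=\bd\times\bw$ and the $L^\infty(L^2)$-bound on $\bw_h$ give relative compactness in $L^2$. Combined with the CFL condition~\eqref{eq:CFL} (which links spatial and temporal scales and allows control of the discretization error between $\bd_h$, $\wh{\bd}_h$, and the piecewise constant interpolants), this produces strong $L^p$ convergence of $\bd_h$ for $p<\infty$ and, by Lemma~\ref{lem:constraint}, $|\bd|=1$ a.e. To obtain strong convergence of $\Grad\vphi_h$, I would exploit the elliptic equation~\eqref{seq:phid}: subtracting the discrete equation for $\vphi_h$ at two indices and using the coercivity $1+\epsii\min\{0,\cdot\}>0$ along with the strong convergence of $\bd_h$ shows that $\vphi_h\to\vphi$ strongly in $L^2(0,T;H^1(\dom))$, where $\vphi$ is the unique solution of the limit elliptic equation $-\Div(\Grad\vphi+\epsii(\bd\cdot\Grad\vphi)\bd)=f$ with boundary data $g$.

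The third step is to pass to the limit in the weak formulation. For the elliptic equation, strong convergence of $\bd_h$ and weak (then strong) convergence of $\Grad\vphi_h$ allows us to take $h\to 0$ in $a_h^m(\vphi_h,\psi_h)=F^m(\psi_h)$ with $\psi_h$ a finite-element interpolant of any $\psi\in H^1_0(\dom)$. For the $\bd$-equation, one tests~\eqref{seq:wd} with a suitable discrete counterpart of a test function $\phi\in C^\infty_c([0,T)\times\dom)$; using discrete integration by parts in time to move $D_t\bw$ onto $\phi$, the identity $\bw^{m+1/2}=D_t\bd^m\times\dhalf$ (a consequence of \eqref{seq:dd} and the constraint preservation argument) to rewrite terms in the form of the weak formulation, and the vector identity $A\cdot(B\times C)=B\cdot(C\times A)$, I would show that each term converges. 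The discrete Laplacian term becomes $\oneconst\Grad\bd\times\bd:\Grad\phi$ using standard summation by parts and weak convergence of $\Grad_h\bd_h$ paired with strong convergence of $\bd_h$. The quadratic term in $\Grad\vphi$ passes to the limit thanks to the strong $L^2$-convergence of $\Grad\vphi_h$ together with the uniform $L^\infty$ bound and a.e.\ convergence of $\bd_h$. The attainment of the initial data~\eqref{eq:contat0} follows from the time-continuity estimate used in the compactness argument.

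The principal obstacle, in my view, is the passage to the limit in the quadratic coupling $(\bd\cdot\Grad\vphi)\Grad\vphi\times\bd$: this requires strong convergence of $\Grad\vphi_h$, which is not directly provided by the energy estimate (that gives only weak $H^1$-compactness in space, uniform in time). Overcoming this hinges on using the elliptic equation~\eqref{seq:phid} as an independent source of regularity: the difference $\vphi_h-\vphi$ satisfies an elliptic equation whose right-hand side goes to zero strongly because of the a.e.\ convergence $\bd_h\to\bd$ together with the Dominated Convergence Theorem applied with the envelope $|\bd_h|\equiv 1$. A secondary technical point is matching the piecewise-constant-in-time, piecewise-constant-in-space approximations for $\bd_h,\bw_h$ with continuous test functions, which I would handle by the standard device of comparing with their nodal/midpoint values and absorbing the resulting $O(h+\Dt)$ error using~\eqref{eq:CFL}.
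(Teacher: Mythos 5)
Your overall architecture coincides with the paper's: energy estimate plus constraint preservation give the uniform bounds, a discrete Aubin--Lions argument gives strong $L^2$ convergence of $\bd_h$, the coercivity of the elliptic form for $\epsii>-1$ is used to upgrade the weak convergence of $\Grad\vphi_h$ to strong $L^2$ convergence, and this is exactly what unlocks the quadratic coupling term. You also correctly identify the strong convergence of $\Grad\vphi_h$ as the principal obstacle. Two of your justifications, however, do not hold as stated.

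First, the identity $\bw^{m+1/2}=D_t\bd^m\times\dhalf$ is false at the discrete level. From \eqref{seq:dd} one gets
\begin{equation*}
D_t\bd^m_{\bi}\times\dhalf_{\bi}=\bigl(\dhalf_{\bi}\times\bw^{m+1/2}_{\bi}\bigr)\times\dhalf_{\bi}
=\bigl|\dhalf_{\bi}\bigr|^2\bw^{m+1/2}_{\bi}-\bigl(\dhalf_{\bi}\cdot\bw^{m+1/2}_{\bi}\bigr)\dhalf_{\bi},
\end{equation*}
and $|\dhalf_{\bi}|^2=\tfrac12(1+\bd^m_{\bi}\cdot\bd^{m+1}_{\bi})\neq 1$ in general, nor is $\dhalf_{\bi}\cdot\bw^{m+1/2}_{\bi}$ zero. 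Since Definition~\ref{def:weaksol} is phrased in terms of $\bd_t\times\bd$ rather than $\bw$, you still need to identify the limit $\bw$ with $\bd_t\times\bd$; the paper does this only in the limit, where $|\bd|=1$ gives $\bd_t\times\bd=\bw-(\bd\cdot\bw)\bd$, and then proves $\bd\cdot\bw=0$ a.e.\ from the discrete conservation law $D_t(\bd_h^m\cdot\bw_h^m)=0$ (a consequence of \eqref{seq:dd}--\eqref{seq:wd} and $\bd_h^0\cdot\bw_h^0=0$) combined with strong convergence of $\bd_h$ and weak convergence of $\bw_h$. This extra conservation property is missing from your argument and cannot be bypassed. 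Relatedly, you never address why the limits of $\bw_h$ and $\wh{\bw}_h$ agree, which is needed because both appear in \eqref{seq:wd}; the paper shows $\Dt\,D_t^+\bw_h\to 0$ in a negative Sobolev norm using the equation itself.

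Second, your justification for the strong convergence of $\Grad\vphi_h$ via ``dominated convergence with envelope $|\bd_h|\equiv 1$'' is not sufficient: the error terms produced when comparing the discrete and limit elliptic identities are products of an a.e.-convergent bounded sequence ($\bd_h^{(k)}\bd_h^{(\ell)}$) with sequences such as $\partial_i\vphi_h\,\partial_j\vphi$ that converge only \emph{weakly} in $L^1$, so there is no pointwise convergence to dominate. One needs the product lemma (Lemma~\ref{lem:kennethlemma}): a uniformly bounded a.e.-convergent sequence times a weakly $L^1$-convergent sequence converges weakly in $L^1$. There is also the conformity issue you gloss over: $\vphi_h-\vphi\notin V_h$, so one cannot literally subtract the two equations tested against their difference; the paper instead tests each equation against its own admissible function ($u_{0,h}$ and $u_0$ respectively) and compares the resulting energy identities. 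These repairs are all available, but as written the proposal has genuine holes at exactly these points.
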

\begin{proof}
	Using Lemma~\ref{lem:constraint} and Lemma~\ref{lem:discreteenergy}, the Poincar\'{e} inequality for $u_{0,h}$, and that $\norm{\widetilde{g}_h(t)}_{H^1}\leq C\norm{\widetilde{g}(t)}_{H^1}\leq C$ uniformly in $h$, 
 we obtain the following uniform in $h>0$ bounds for all $t\in [0,T]$, 
\begin{subequations}
	\label{eq:uniformbounds}
	\begin{align}
	&\norm{\bd_h(t)}_{L^2}\leq C,\quad \norm{\Grad_h \bd_h}_{L^2}\leq C,\label{eq:waek}\\
	&|\bd_h(t,x)|=1,\quad (t,x)\in [0,T]\times\dom,\\
	&\norm{\bw_h(t)}_{L^2}\leq C,\\
	&\norm{\vphi_h(t)}_{L^2}\leq C,\quad \norm{\Grad \vphi_h}_{L^2}\leq C.
	\end{align}
\end{subequations}
Using the numerical scheme, we also get a priori estimates on the forward time differences of $\bd_h$:
Since $D_t^+\bd^m_{\bi} = \bd_{\bi}^{m+\nicefrac{1}{2}}\times \bw_{\bi}^{m+\nicefrac{1}{2}}$, and $|\bd_{\bi}^m|=1$ by Lemma~\ref{lem:constraint}, we obtain,
\begin{equation}\label{eq:timederbounded}
\norm{D_t^+ \bd_h(t)}_{L^2}\leq C\norm{\wh{\bw}_h(t)}_{L^2}\leq C.
\end{equation}
The Banach-Alaoglu theorem then implies the convergence of subsequences, for simplicity of notation denoted by $\{(\bd_h,\bw_h,\vphi_h)\}_{h>0}$,
\begin{subequations}\label{eq:weakconv}
\begin{align}
	&\bd_h\weakstar \bd,\quad\text{in }\,  L^\infty([0,T);L^2(\dom)),\quad \wh{\bd}_h\weakstar \bd,\quad\text{in }\,  L^\infty([0,T);L^2(\dom)),\\
	&D_t^+\bd\weakstar \partial_t \bd,\quad\text{in }\,  L^\infty([0,T);L^2(\dom)),\quad \Grad_h\bd_h\weakstar\Grad\bd,\quad\text{in }\,  L^\infty([0,T);L^2(\dom)),\\
	&\bw_h\weakstar \bw,\quad\text{in }\, L^\infty([0,T);L^2(\dom)),\quad \wh{\bw}_h\weakstar \wh{\bw},\quad\text{in }\, L^\infty([0,T);L^2(\dom)),\\
		&\vphi_h\weakstar \vphi,\quad\text{in }\, L^\infty([0,T);H^1(\dom)),\quad u_{0,h}\weakstar u_0,\quad\text{in }\, L^\infty([0,T);H^1(\dom))
		\label{eq:weakphi}
\end{align}
\end{subequations}
as $h\to 0$. The second item follows since
\begin{equation*}
\wh{\bd}_h(t,x) = \bd_h(t,x) +\frac{1}{2}\Dt D_t^+\bd(t,x)),
\end{equation*}
and so by~\eqref{eq:timederbounded}, 
\begin{equation*}
\int_{\dom} |\bd_h(t,x)-\wh{\bd}_h(t,x)|^2 dx \leq \Dt^2 \norm{D_t^+\bd_h(t,\cdot)}_{L^2}^2\to 0,\quad \text{as }\, \Dt,h\to 0.
\end{equation*}
The uniform bounds~\eqref{eq:waek} and~\eqref{eq:timederbounded} in fact imply pre-compactness of the sequence $\{\bd_h\}_{h>0}$ in $L^2$, (see e.g. Chapter 6 in~\cite{lady}):
\begin{equation}\label{eq:strongconv}
\bd_h\rightarrow\bd,\quad\text{in }\, L^2([0,T]\times\dom).
\end{equation}
Therefore a subsequence of $\{\bd_h\}_{h>0}$ converges almost everywhere and so $|\bd|=1$ for almost every $(t,x)\in [0,T]\times\dom$. Since $\bd_h$ and $\wh{\bd}_h$ have the same strong limit in $L^2$ and almost everywhere, their weak gradients have the same limits too.
We write the scheme~\eqref{eq:m2} in terms of the piecewise constant functions $\bd_h$, $\bw_h$, etc.:
\begin{subequations}\label{eq:m2h}
	\begin{align}
	\label{seq:ddh}
	&D_t \bd_h=\wh{\bd}_{h}\times \wh{\bw}_{h},\\
	\label{seq:wdh}
	&\disp D_t \bw_h=\oneconst\Delta_h \wh{\bd}_{h}\times \wh{\bd}_{h}-\damp \wh{\bw}_{h}+S_h\times\wh{\bd}_h\\
	\label{seq:phidh}
	&u_{0,h}(\cdot)\in V_h,\quad\text{s.t.}\quad a_h(u_{0,h},v)=F(v)-a_h(\widetilde{g}_h,v),\quad \forall v\in V_h,\quad \vphi_h := u_{0,h}+\widetilde{g}_h,
	\end{align}
\end{subequations}
where
\begin{equation}
\label{eq:Sh}
\begin{split}
 S_h(t,x)&:=\frac{\epsi}{2|\mathcal{C}_{\bi}|}\int_{\mathcal{C}_{\bi}}\Big((\Grad\vphi_h(t^{m+1},y) \cdot\wh{\bd}_h)\Grad\vphi_h(t^m,y)\\
 &\hphantom{\frac{\epsi}{2|\mathcal{C}_{\bi}|}\int_{\mathcal{C}_{\bi}}\Big(} +(\Grad\vphi_h(t^{m},y) \cdot\wh{\bd}_h)\Grad\vphi_h(t^{m+1},y)\Big) dy,\quad x\in \mathcal{C}_{\bi},\, t\in [t^m,t^{m+1}).
\end{split}
\end{equation}
Denote by $\Pi_h:H^1_0(\dom)\to V_h$ the projection onto the finite element space $V_h$. This projection satisfies
\begin{equation}\label{eq:H1projection}
\abs{\Pi_h v-v}_{H^1(\dom)}\stackrel{h\to 0}{\longrightarrow} 0,
\end{equation}
for any function in $H^1(\dom)$, see e.g.~\cite[Rem. 1.6]{Ern2004}.
Multiplying equations~\eqref{seq:ddh} and~\eqref{seq:wdh} with test functions $\phi_1,\phi_2\in C^\infty_c([0,T)\times\dom)$ and integrating over $[0,T)\times\dom$, and using the projection $\Pi_h\psi$ of a function $\psi\in L^2(0,T; H^1_0(\dom))$ as a test function in~\eqref{seq:phidh}, we obtain
\begin{subequations}\label{eq:m2hw}
	\begin{align}
	\label{seq:ddhw}
	&\int_0^T\int_{\dom} D_t \bd_h\cdot \phi_1 dx dt=\int_0^T\int_{\dom}(\wh{\bd}_{h}\times \wh{\bw}_{h})\cdot \phi_1 dxdt,\\
	\label{seq:wdhw}
	&-\int_{\Dt}^T\int_{\dom}\disp \bw_h\cdot D_t ^-\phi_2 dx dt-\int_{\dom}\disp \bw_h(0,x)\cdot \phi_2(0,x) dx\\
	&\quad =\int_0^T\int_{\dom}\left(-\oneconst(\Grad_h \wh{\bd}_{h}\times \wh{\bd}_{h}):\Grad_h\phi_2-\damp \wh{\bw}_{h}\cdot\phi_2+(S_h\times\wh{\bd}_h)\cdot\phi_2\right) dxdt, \notag\\
	\label{seq:phidhw}
	&\int_0^T\int_{\dom} (\Grad\vphi_h+\epsii(\bd_h\cdot\Grad\vphi_h)\bd_h)\cdot\Grad\Pi_h\psi dx dt = \int_0^T\int_{\dom}f\Pi_h\psi dxdt,
	\end{align}
\end{subequations}
where $\nu$ is the outward normal vector along the boundary.
Letting $h\to 0$ and using~\eqref{eq:weakconv},~\eqref{eq:strongconv}, and~\eqref{eq:H1projection} we obtain along a subsequence,
 \begin{subequations}\label{eq:m2hwx}
 	\begin{align}
 	\label{seq:ddhwx}
 	&\int_0^T\int_{\dom} \partial_t\bd\cdot \phi_1 dx dt=\int_0^T\int_{\dom}(\bd\times \wh{\bw} )\cdot \phi_1 dxdt,\\
 	\label{seq:wdhwx}
 	&-\int_{0}^T\int_{\dom}\disp \bw\cdot \partial_t\phi_2 dx dt-\int_{\dom}\disp \bw_0(x)\cdot \phi_2(0,x) dx\\
 	&\quad =\int_0^T\int_{\dom}\left(-\oneconst(\Grad \bd\times \bd):\Grad\phi_2-\damp \wh{\bw}\cdot\phi_2+(\overline{S}\times{\bd})\cdot\phi_2\right) dxdt, \notag\\
 	\label{seq:phidhwx}
 	&\int_0^T\int_{\dom} ( \Grad \vphi+\epsii(\bd\cdot \Grad\vphi) \bd )\cdot\Grad\psi dxdt =\int_0^T\int_{\dom}f \psi dxdt,
 	\end{align}
 \end{subequations}
where $\overline{S}$ is the weak limit of $\{S_h\}_{h>0}$. 
 Hence it remains to show that $\wh{\bw}=\bw$ and $\overline{S}=\epsi(\Grad\vphi\cdot\bd)\Grad\vphi$, where the latter will follow if we can show that $\Grad\vphi$ converges strongly in $L^2([0,T]\times\dom)$. 

To show that $\overline{S}=\epsi(\Grad\vphi\cdot\bd)\Grad\vphi$, we on one hand choose $u_0:=\vphi-\widetilde{g}$ as a test function in~\eqref{seq:phidhwx} and obtain
\begin{equation}
\label{eq:limphi}
\int_0^T\int_{\dom} ( |\Grad \vphi|^2+\epsii(\bd\cdot \Grad\vphi)^2) dx=\int_0^T\int_{\dom}f u_0 \,dxdt+\int_0^T\int_{\dom} ( \Grad \vphi+\epsii(\bd\cdot \Grad\vphi) \bd )\cdot\Grad\widetilde{g}\, dxdt. 
\end{equation}
On the other hand, we can take $\psi=u_{0,h}=\vphi_h-\widetilde{g}_h$ as a test function in~\eqref{seq:phidhw}, and integrate over $[0,T]$:
\begin{equation}
\label{eq:beforelimphi}
\begin{split}
\int_0^T\int_{\dom} ( |\Grad \vphi_h|^2+\epsii(\bd_h\cdot \Grad\vphi_h)^2) dx=&\int_0^T\int_{\dom}f u_{0,h}\, dxdt\\
&+\int_0^T\int_{\dom} ( \Grad \vphi_h+\epsii(\bd_h\cdot \Grad\vphi_h) \bd_h )\cdot\Grad\widetilde{g}_h\, dxdt. 
\end{split}
\end{equation}
Subtracting~\eqref{eq:limphi} from~\eqref{eq:beforelimphi}, we obtain
\begin{multline}
\label{eq:difference}
\underbrace{\int_0^T \int_{\dom}\left(  |\Grad \vphi_h|^2-|\Grad\vphi|^2+ \epsii(\bd_h\cdot \Grad \vphi_h)^2-\epsii(\bd\cdot\Grad\vphi)^2\right)dxdt}_{A}\\
=\underbrace{\int_0^T\int_{\dom}f\cdot u_{0,h} dxdt-\int_0^T\int_{\dom}f\cdot u_0 dxdt}_{F}\\
+\underbrace{\int_0^T\int_{\dom} ( \Grad \vphi_h+\epsii(\bd_h\cdot \Grad\vphi_h) \bd_h )\cdot\Grad\widetilde{g}_h\, dxdt-  
	\int_0^T\int_{\dom} ( \Grad \vphi+\epsii(\bd\cdot \Grad\vphi) \bd )\cdot\Grad\widetilde{g}\, dxdt}_{G}.
\end{multline}
We denote $E_h:= \Grad\vphi_h$ and $E:= \Grad\vphi$. Then we can rewrite the term $A$ as
\begin{align*}
A &= \int_0^T \int_{\dom}\left(  |E_h -E|^2 +\epsii(\bd_h\cdot (E_h-E) )^2\right)dxdt\\
&\quad+\int_0^T \int_{\dom}\left( 2 E\cdot (E_h-E) +\epsii\left( 2(\bd_h\cdot (E_h-E))(\bd_h\cdot E)  +(\bd_h\cdot E)^2-(\bd\cdot E)^2\right)\right)dxdt\\
\end{align*} 
We note that for $\epsii>-1$, we have
\begin{equation*}
\int_0^T \int_{\dom}\left(  |E_h -E|^2 +\epsii(\bd_h\cdot (E_h-E) )^2\right)dxdt\geq \min\{1,1-\epsii\}\int_0^T \int_{\dom}|E_h -E|^2 dx dt.
\end{equation*}
Hence~\eqref{eq:difference} implies
\begin{multline}
\label{eq:tbestimated}
\min\{1,1-\epsii\}\norm{E-E_h}_{L^2([0,T]\times\dom)}^2\\
\leq\int_0^T \int_{\dom}\left(  |E_h -E|^2 +\epsii(\bd_h\cdot (E_h-E) )^2\right)dxdt\\
=\underbrace{ 2\int_0^T \int_{\dom}  E\cdot (E-E_h)dxdt}_{A_1}   -\underbrace{2\epsii \int_0^T \int_{\dom} (\bd_h\cdot E_h)(\bd_h\cdot E) dxdt}_{A_2}\\
 +\underbrace{\epsii  \int_0^T \int_{\dom} \left((\bd\cdot E)^2+(\bd_h\cdot E)^2\right)dxdt}_{A_3} 
 +F+G
\end{multline}
We now show that each of the terms $A_1$, $F$ and $G$ go to zero as $h\to 0$. In addition, we will see that $A_3-A_2 \to 0$. The term $A_1$ goes to zero because $\{E_h\}_{h>0}$ converges weakly up to a subsequence. 
For the term $A_2$, we use Lemma~\ref{lem:kennethlemma} componentwise for $v_m=E_{h_m}^{(i)}E^{(j)}$ and $u_m=\bd_{h_m}^{(k)}\bd_{h_m}^{(\ell)}$, $i,j,k,\ell=1,\dots,n$, since $\bd_h\in L^\infty([0,T)\times\dom)$ and converges almost everywhere, and $E_h^{(i)}E^{(j)}$ converges weakly in $L^1$ to $E^{(i)}E^{(j)}$ as $h,\Delta t\to 0$. Thus,
\begin{equation*}
A_2\longrightarrow 2\epsii\int_0^T\int_\dom (\bd\cdot E)^2 dx dt,\quad \text{ as } h,\Delta t\to 0.
\end{equation*}
For the term $A_3$, we can use the same lemma with $v_m = E^{(i)}E^{(j)}$ and $u_m=\bd_{h_m}^{(k)}\bd_{h_m}^{(\ell)}$, $i,j,k,\ell=1,\dots,n$ to obtain
\begin{equation*}
A_3\longrightarrow 2\epsii \int_0^T\int_\dom (\bd\cdot E)^2 dx dt,\quad \text{ as } h,\Delta t\to 0.
\end{equation*}
Hence $A_3-A_2\stackrel{h,\Delta t\to 0}{\longrightarrow} 0$. The term $F$ goes to zero due to the weak convergence of $u_{0,h}$,~\eqref{eq:weakphi}.

 We turn to the remaining term $G$. 
 We can write it as
 \begin{multline*}
 G = \underbrace{\int_0^T\int_{\dom} ( \Grad \vphi_h\cdot\Grad\widetilde{g}_h - \Grad \vphi\cdot\Grad\widetilde{g})\, dxdt}_{G_1}\\
 + \underbrace{\epsii\int_0^T\int_{\dom} ( (\bd\cdot \Grad\vphi_h) \bd \cdot\Grad\widetilde{g}-(\bd\cdot \Grad\vphi) \bd \cdot\Grad\widetilde{g})\, dxdt}_{G_2}\\
 + \underbrace{\epsii\int_0^T\int_{\dom} ( (\bd_h\cdot \Grad\vphi_h) \bd_h \cdot\Grad\widetilde{g}-(\bd\cdot \Grad\vphi_h) \bd \cdot\Grad\widetilde{g})\, dxdt}_{G_3}\\
 +
 \underbrace{\epsii\int_0^T\int_{\dom} ((\bd_h\cdot \Grad\vphi_h) \bd_h \cdot\Grad\widetilde{g}_h - (\bd_h\cdot \Grad\vphi_h) \bd_h \cdot\Grad\widetilde{g}\, dxdt}_{G_4}
 \end{multline*}
 $G_1$ goes to zero by the strong convergence of $\Grad\widetilde{g}_h$ in $L^2_{t,x}$ and the weak convergence of $\Grad\vphi_h$ in $L^2_{t,x}$, and $G_2$ goes to zero by the weak convergence of $\Grad\vphi_h$ in $L^2_{t,x}$ as well. For $G_3$, we use Lemma~\ref{lem:kennethlemma} componentwise with $v_m = \partial_i\vphi_{h_m}\partial_{j}\widetilde{g}$ and $u_m=\bd_{h_m}^{(k)}\bd_{h_m}^{(\ell)}-\bd^{(k)}\bd^{(\ell)}$ since $\bd_h$ converges almost everywhere and $\partial_i\vphi_{h_m}\partial_{j}\widetilde{g}$ converges weakly to $\partial_i\vphi\partial_j\widetilde{g}$ in $L^1$ as $\Dt,h\to 0$. Finally, $G_4\to 0$ due to the strong convergence of $\Grad\widetilde{g}_h$ in $L^2$, c.f.~\eqref{eq:H1projection}. Hence also $G\to 0$ as $\Dt,h\to 0$. 
 Therefore, the right hand side of~\eqref{eq:tbestimated} goes to zero as $h,\Delta t\to 0$ (up to subsequence).
Hence
\begin{equation}
\label{eq:strongE}
E_h=\Grad\vphi_h\longrightarrow E=\Grad\vphi,\quad \text{ in } L^2([0,T]\times\dom),\, \text{ as } h,\Delta t\to 0.
\end{equation}
We observe that
\begin{align*}
\int_{0}^T\int_{\dom}(S_h\times\wh{\bd}_h)\cdot\phi_2 dx dt& = \frac{\epsi}{2}\underbrace{\int_{0}^T\int_{\dom}(\Grad\vphi_h(t+\Dt,x) \cdot\wh{\bd}_h)(\Grad\vphi_h \times\wh{\bd}_h)\cdot\phi_2 dx dt}_{I}\\
& + \frac{\epsi}{2}\underbrace{\int_{0}^T\int_{\dom}(\Grad\vphi_h \cdot\wh{\bd}_h)(\Grad\vphi_h(t+\Dt,x) \times\wh{\bd}_h)\cdot\phi_2 dx dt}_{J}. 
\end{align*}
We consider the term $I$ ($J$ can be treated similarly). We rewrite it as follows
\begin{align*}
&\int_{0}^T\int_{\dom}(\Grad\vphi_h(t+\Dt,x) \cdot\wh{\bd}_h)(\Grad\vphi_h \times\wh{\bd}_h)\cdot\phi_2 dx dt\\
\quad& = \underbrace{\int_{0}^T\int_{\dom}(\Grad\vphi_h(t+\Dt,x) \cdot{\bd})(\Grad\vphi \times{\bd})\cdot\phi_2 dx dt}_{I_1} \\
&\qquad + \underbrace{\int_{0}^T\int_{\dom}\left[(\Grad\vphi_h(t+\Dt,x) \cdot\wh{\bd}_h)(\Grad\vphi \times\wh{\bd}_h)-(\Grad\vphi_h(t+\Dt,x) \cdot{\bd})(\Grad\vphi \times{\bd})\right]\cdot\phi_2 dx dt}_{I_2}\\
&\qquad+\underbrace{\int_{0}^T\int_{\dom}(\Grad\vphi_h(t+\Dt,x) \cdot\wh{\bd}_h)((\Grad\vphi_h-\Grad\vphi) \times\wh{\bd}_h)\cdot\phi_2 dx dt}_{I_3}.
\end{align*}
To analyze term $I_1$, we denote $v:=[(\Grad\vphi\times\bd)\cdot\phi_2]\bd\in L^\infty(0,T;L^2(\dom))$. We assume that $\Dt$ is small enough such that the support of $\phi_2(t,\cdot)$ (and hence $v(t,\cdot)$) is contained in $(0,T-\Dt)$. Then
\begin{align*}
I_1 & = \int_0^T\int_{\dom}\Grad\vphi_h(t+\Dt,x)\cdot v(t,x) dx dt \\
& = \int_{\Dt}^{T+\Dt}\int_{\dom}\Grad\vphi_h(t,x)\cdot v(t-\Dt,x) dx dt \\
& = \int_{\Dt}^{T}\int_{\dom}\Grad\vphi_h(t,x)\cdot (v(t-\Dt,x)-v(t,x)) dx dt + \int_{\Dt}^{T}\int_{\dom}\Grad\vphi_h(t,x)\cdot v(t,x) dx dt \\
& = \int_{\Dt}^{T}\int_{\dom}\Grad\vphi_h(t,x)\cdot (v(t-\Dt,x)-v(t,x)) dx dt + \int_{0}^{T}\int_{\dom}\Grad\vphi_h(t,x)\cdot v(t,x) dx dt\\
&\quad - \int_0^{\Dt}\int_{\dom}\Grad\vphi_h(t,x)\cdot v(t,x) dx dt,
\end{align*}
where we used for the second and third equality that $v(t,\cdot)$ is supported in $(0,T-\Dt)$.
Hence
\begin{align*}
\left|I_1-\int_{0}^{T}\int_{\dom}\Grad\vphi(t,x)\cdot v(t,x) dx dt\right|&\leq \left|\int_{\Dt}^{T}\int_{\dom}\Grad\vphi_h(t,x)\cdot (v(t-\Dt,x)-v(t,x)) dx dt\right|\\
&\quad  +\left| \int_{0}^{T}\int_{\dom}(\Grad\vphi_h-\Grad\vphi)\cdot v(t,x) dx dt\right|\\
&\quad +\left|\int_0^{\Dt}\int_{\dom}\Grad\vphi_h(t,x)\cdot v(t,x) dx dt\right|\\
&\leq \norm{\Grad\vphi_h}_{L^2([0,T]\times\dom)} \norm{v(\cdot-\Dt)-v}_{L^2([\Dt,T]\times\dom)}\\
&\quad  +\left|\int_{0}^{T}\int_{\dom}(\Grad\vphi_h-\Grad\vphi)\cdot v(t,x) dx dt\right|\\
&\quad + \Dt\norm{v}_{L^\infty(0,T;L^2(\dom))}\norm{\Grad\vphi_h}_{L^\infty(0,T;L^2(\dom))}\\
&\leq C \norm{v(\cdot-\Dt)-v}_{L^2([\Dt,T]\times\dom)} \\
&\quad  +\left|\int_{0}^{T}\int_{\dom}(\Grad\vphi_h-\Grad\vphi)\cdot v(t,x) dx dt\right|+ C\Dt ,
\end{align*}
where we used the energy inequality, Lemma~\ref{lem:discreteenergy}, for the last inequality. Now the first term on the right hand side goes to zero by the continuity of $L^2$-shifts and the second term goes to zero due to the weak convergence of $\Grad\vphi_h$,~\eqref{eq:weakphi}. Hence
\begin{equation*}
I_1\longrightarrow \int_0^T\int_{\dom} (\Grad\vphi\cdot\bd)(\Grad\vphi\times\bd)\cdot\phi_2 dx dt,\quad \text{ as } \Dt,h\to 0,
\end{equation*}
up to a subsequence. Next, we consider $I_2$:
\begin{align*}
I_2 =\sum_{i=1}^n\int_{0}^T\int_{\dom}\partial_i\vphi_h(t+\Dt,x) \cdot\left\{\wh{\bd}_h^{(i)}(\Grad\vphi \times\wh{\bd}_h) -\bd^{(i)}(\Grad\vphi \times{\bd}) \right\}\cdot\phi_2 dx dt.
\end{align*}
We apply Lemma~\ref{lem:kennethlemma} componentwise with $v_m=\partial_i\vphi_{h_m}\partial_j\vphi\phi_2^{(r)}$ and $u_m = \bd_{h_m}^{(k)}\bd_{h_m}^{(\ell)}-\bd^{(k)}\bd^{(\ell)}$, $i,j,k,\ell,r=1,\dots, n$ and obtain that $I_2\to 0$ as $\Dt,h\to 0$ because $u_m\to 0$ a.e.. Lastly, $I_3$ goes to zero due to the $L^2$-convergence of $\Grad\vphi_h$:
\begin{align*}
|I_3|&\leq \norm{\wh{\bd}_h}_{L^\infty}^2\norm{\phi_2}_{L^\infty}\norm{\Grad\vphi_h(\cdot+\Dt)}_{L^2([0,T]\times\dom)}\norm{\Grad\vphi_h-\Grad\vphi}_{L^2([0,T]\times\dom)}\\
&\leq C\norm{\Grad\vphi_h-\Grad\vphi}_{L^2([0,T]\times\dom)} \stackrel{h,\Dt\to 0}{\longrightarrow} 0,
\end{align*}
where we also used the energy inequality, Lemma~\ref{lem:discreteenergy}. Thus, we conclude that
\begin{equation*}
I\stackrel{h,\Dt\to 0}{\longrightarrow}\int_0^T\int_{\dom} (\Grad\vphi\cdot\bd)(\Grad\vphi\times\bd)\cdot\phi_2 dx dt.
\end{equation*}
In a similar way, one can show that $J$ converges to the same quantity and hence
\begin{equation*}
\int_0^T\int_{\dom}(S_h\times\wh{\bd}_h)\cdot \phi_2 dx dt \stackrel{h,\Dt\to 0}{\longrightarrow}\epsi \int_0^T\int_{\dom} (\Grad\vphi\cdot\bd)(\Grad\vphi\times\bd)\cdot\phi_2 dx dt.
\end{equation*}
It remains to show that the limits of $\wh{\bw}_h$ and $\bw_h$ agree. We note that
\begin{equation*}
\wh{\bw}_h(t,x) = \bw_h(t,x) +\frac{1}{2}\Dt D_t^+\bw_h(t,x))
\end{equation*}
and use the finite difference scheme~\eqref{seq:wd} to show that $\Dt D_t^+ \bw_h$ vanishes. We write~\eqref{seq:wd} in terms of the piecewise constant functions $\bd_h$, $\bw_h$, etc.,
\begin{equation}\label{eq:fdmwh}
\disp D_t^+ \bw_h = -\damp\wh{\bw}_h +\oneconst\Div_h(\Grad_h\wh{\bd}_h\times \wh{\bd}_h)+S_h\times\wh{\bd}_h,
\end{equation}
where $S_h$ is defined in~\eqref{eq:Sh}.
Note that $S_h\in L^\infty([0,T];)L^1(\dom))$ for all $h>0$ by the a priori estimates~\eqref{eq:uniformbounds}.
We multiply~\eqref{eq:fdmwh} by a test function $\phi\in H^2(\dom)$, integrate over the domain, and change variables,
\begin{equation}\label{eq:fdmwh2}
\disp\int_{\dom} D_t^+ \bw_h\cdot \phi dx = -\int_{\dom}\left(\damp\wh{\bw}_h\phi +\oneconst(\Grad_h\wh{\bd}_h\times \wh{\bd}_h):\Grad_h \phi\right) dx+\int_{\dom} (S_h\times\wh{\bd}_h)\cdot \phi dx.
\end{equation}
Multiplying everything with $\Dt$, we see that the right hand side goes to zero as $\Dt\to 0$ due to the uniform bounds coming from the energy estimate~\eqref{eq:discreteenergyestimate}, and hence $\Dt \int_{\dom} D_t^+\bw_h\cdot \phi dx\to 0$ for all test functions $\phi\in L^1([0,T];H^2(\dom))$. Hence $\bw=\wh{\bw}$ in $L^\infty([0,T];H^{-2}(\dom))$ for this subsequence. Using density of $H^2$-functions in $L^2$, and that $\bw,\wh{\bw}\in L^2([0,T]\times\dom)$, we have $\bw=\wh{\bw}$ in $L^2([0,T]\times\dom)$ and in particular $\bw=\wh{\bw}$ for almost every $(t,x)\in [0,T]\times \dom$. This allows us to pass to the limit in all the terms in~\eqref{eq:m2hw} and conclude that the limit is a weak solution of~\eqref{eq:angmomentum}. Since $\bd_t\in L^\infty([0,T];L^2(\dom))$, the first equation~\eqref{seq:dt} holds for a.e. $(t,x)$. Since $|\bd|=1$, this means (taking the cross product with $\bd$):
\begin{equation}\label{eq:dtw}
\bd_t\times\bd = (\bd\times\bw)\times \bd = \bw-(\bd\cdot\bw)\bd.
\end{equation}
But from the numerical method~\eqref{seq:dd}--\eqref{seq:wd}, we have
\begin{equation*}
D_t(\bd_h^m\cdot\bw_h^m)= \bd^{m+\nicefrac{1}{2}}_h\cdot D_t\bw_h^m+\bw_h^{m+\nicefrac{1}{2}}\cdot D_t \bd_h^m = 0,
\end{equation*}
and since $\bd_h^0\cdot\bw_h^0=0$, this implies that
\begin{equation*}
\bd_h\cdot \bw_h=0,\quad (t,x)\in [0,T]\times\dom.
\end{equation*}
Since $\bd_h$ converges strongly, and $\bw_h$ converges weakly in $L^2$, we obtain that in the limit
\begin{equation*}
\bd\cdot \bw=0,\quad \text{ a.e. } (t,x)\in [0,T]\times\dom.
\end{equation*}
Thus,~\eqref{eq:dtw} becomes
\begin{equation*}
\bw = \bd_t \times \bd,
\end{equation*}
and we conclude that the limit $(\bd,\vphi)$ satisfies~\eqref{eq:weakformulation}. The continuity of $\bd$ at zero,~\eqref{eq:contat0}, follows from the fact that $\bd_t\in L^2([0,T]\times\dom)$. Hence $(\bd,\bw)$ is a weak solution in the sense of Definition~\ref{def:weaksol}.
\end{proof}
\begin{remark}
	Combining the techniques used here with those used in~\cite{damped}, it should also be possible to prove convergence in the case that $\disp=0$ and $\damp>0$.
\end{remark}
\section{Solving the nonlinear system \eqref{eq:m2}}\label{sec:fixed}
The algebraic system \eqref{eq:m2} is nonlinear and implicit, therefore it is not a priori clear that it has a solution. In this section, we show that a solution can be obtained using a fixed point iteration. In 3D, this iteration converges only under a strict assumption for the CFL-condition, specifically, given the estimates on the approximations we have from the energy estimate, Lemma~\ref{lem:discreteenergy}, we require $\Dt\leq C h^\theta$, where $\theta=\max\{1,\sfrac{n}{2}\}$, for some constant $C>0$. It is possible that there are faster ways of obtaining solutions to \eqref{eq:m2} and under milder assumptions on the time step $\Dt$. This is one of our current research efforts.
The fixed point iteration yields a constructive existence result, but in practice, the algebraic system~\eqref{eq:m2} can also be solved using Newton's method or similar.
We will assume for this section that $\disp>0$, if this assumption is not made, then the CFL-condition needed for convergence of the fixed point algorithm might be stricter, c.f.~\cite{damped}.

To set up a suitable fixed point iteration, we first collect a few observations:
We first note, that the update for $\bd_{\bi}^m$,~\eqref{seq:dd}, can be written as
\begin{equation}\label{eq:V}
\bd^{m+1}_{\bi} = V(\bw_{\bi}^{m+\frac12})\bd_{\bi}^m,
\end{equation}
where $V$ is the matrix given by
\begin{equation*}
V(w)=\frac{1}{1+\frac{\Delta t^2}{4}|w|^2}\left(\left(1-\frac{\Delta t^2}{4}|w|^2\right)\mathbf{I}+\frac{\Delta t^2}{2}w\otimes w+\Delta tQ(w)\right),
\end{equation*}
with $Q(w)$ defined as the skew-symmetric matrix
\begin{equation*}
Q(w)=\begin{pmatrix}
0 & w^{(3)} & -w^{(2)}\\
-w^{(3)} & 0 & w^{(1)}\\
w^{(2)} & -w^{(1)} & 0
\end{pmatrix}.
\end{equation*}
In fact, $Q$ is such that $Q(w)v=v\times w$. In~\cite[Lemma 4.8]{Karper2014} we have shown that 
\begin{equation}\label{eq:destimate}
\norm{V(\overline{\bw}_h^1)\bd_h-V(\overline{\bw}^2)\bd_h}_{L^2(\dom)}\leq C\Delta t \norm{\bw^1_h-\bw_h^2}_{L^2(\dom)},
\end{equation}
where $\overline{\bw}_h^i=(\bw_h^i+\bw_h)/2$, $i=1,2$, for piecewise constant functions $\bw_h$, $\bd_h$, $\bw_h^i$, $i=1,2$ on the grid on $\dom$.
Similarly, $\vphi^{m+1}_h$ solves the equation~\eqref{seq:phid}, and is therefore a function of $\bd^{m+1}_{\bi}$ and hence $\bw_{\bi}^{m+1}$. So $\vphi^{m+1}_h:=A(V(\overline{\bw}_h^{m+\nicefrac{1}{2}})\bd_h^m):=\mathcal{A}(\bw_h^{m+1})$. The bilinear forms $a$ and $a_h$ satisfy
\begin{equation*}
\begin{split}
(1+\min\{0,\epsii\})\norm{v}_{H^1_0(\dom)}^2&\leq a(v,v),\quad \forall v\in H^1_0(\dom),\\
(1+\min\{0,\epsii\})\norm{v}_{H^1_0(\dom)}^2&\leq a_h(v,v),\quad \forall v\in H^1_0(\dom),
\end{split}
\end{equation*}
and 
\begin{equation*}
\begin{split}
\left|a(v,w)\right|&\leq C(\epsii) \norm{v}_{H^1_0}\norm{w}_{H^1_0},\quad \forall v,w\in H^1_0(\dom),\\
\left|a_h(v,w)\right|&\leq C(\epsii) \norm{v}_{H^1_0}\norm{w}_{H^1_0},\quad \forall v,w\in H^1_0(\dom),
\end{split}
\end{equation*}
for any $\bd$ with $\|\bd\|_\infty\leq 1$ ($\|\bd_h\|_\infty\leq 1$) uniformly in $h$, which implies ellipticity and continuity in $H^1_0$ of $a$, $a_h$ respectively and hence solvability of the elliptic equation for a given $\bd$, $\bd_h$  with $\|\bd\|_\infty\leq 1$ ($\|\bd_h\|_\infty\leq 1$)  respectively as long as $\epsii>-1$.
Furthermore, we can estimate the difference $\Grad\vphi_h^1-\Grad\vphi_h^2$ between two solutions $\vphi_h^1$ and $\vphi_h^2$ of~\eqref{seq:phidh} for different given $\bd_h^1$ and $\bd_h^2$ (but same boundary conditions and right hand side $f$): If we assume the same boundary conditions, then $\vphi_h^1-\vphi_h^2\in V_h$, hence we can use it as a test function in~\eqref{seq:phidh} for the equation for $\vphi_h^1$ and $\vphi_h^2$. Subtracting the two resulting equations, we have (we omit the time dependence to simplify the notation)
\begin{equation*}
a_h(\vphi_h^1,\vphi_h^1-\vphi_h^2)-a_h(\vphi_h^2,\vphi_h^1-\vphi_h^2) = 0.
\end{equation*}
This is the same as
\begin{multline*}
\int_{\dom}|\Grad(\vphi_h^1-\vphi_h^2)|^2 dx +\epsii\int_{\dom}\left(\Grad(\vphi_h^1-\vphi_h^2)\cdot\frac{\bd_h^1+\bd_h^2}{2}\right)^2 dx \\
+\frac{\epsii}{2}\int_\dom \left((\bd_h^1-\bd_h^2)\cdot \Grad(\vphi_h^1-\vphi_h^2)\right)\left((\Grad\vphi_h^1\cdot\bd_h^1)+(\Grad\vphi_h^2\cdot\bd_h^2)\right) dx \\
+\frac{\epsii}{4}\int_\dom \left((\Grad\vphi_h^1+\Grad\vphi_h^2)\cdot(\bd_h^1-\bd_h^2)\right)\left( (\bd_h^1+\bd_h^2)\cdot(\Grad\vphi_h^1-\Grad\vphi_h^2)\right) dx
=0.
\end{multline*}
We rearrange terms:
\begin{multline*}
\int_{\dom}|\Grad(\vphi_h^1-\vphi_h^2)|^2 dx +\epsii\int_{\dom}\left(\Grad(\vphi_h^1-\vphi_h^2)\cdot\frac{\bd_h^1+\bd_h^2}{2}\right)^2 dx \\
=-\frac{\epsii}{2}\int_\dom \left((\bd_h^1-\bd_h^2)\cdot \Grad(\vphi_h^1-\vphi_h^2)\right)\left((\Grad\vphi_h^1\cdot\bd_h^1)+(\Grad\vphi_h^2\cdot\bd_h^2)\right) dx \\
-\frac{\epsii}{4}\int_\dom \left((\Grad\vphi_h^1+\Grad\vphi_h^2)\cdot(\bd_h^1-\bd_h^2)\right)\left( (\bd_h^1+\bd_h^2)\cdot(\Grad\vphi_h^1-\Grad\vphi_h^2)\right) dx.
\end{multline*}
The left hand side of this expression, we can lower bound by
\begin{equation*}
\text{ LHS }\geq (1+\min(0,\epsii))\norm{\Grad\vphi_h^1-\Grad\vphi_h^2}_{L^2(\dom)}^2
\end{equation*}
The right hand side, we can upper bound by
\begin{equation*}
\text{ RHS }\leq C\norm{\Grad\vphi_h^1-\Grad\vphi_h^2}_{L^2(\dom)}\norm{\bd_h^2-\bd_h^1}_{L^2(\dom)}\left(\norm{\Grad\vphi_h^1}_{L^\infty}+\norm{\Grad\vphi_h^2}_{L^\infty}\right).
\end{equation*}
Combining the two, we obtain
\begin{equation*}
\norm{\Grad\vphi_h^1-\Grad\vphi^2}_{L^2(\dom)}\leq C_{\epsii} \norm{\bd_h^2-\bd_h^1}_{L^2(\dom)}\left(\norm{\Grad\vphi_h^1}_{L^\infty}+\norm{\Grad\vphi_h^2}_{L^\infty}\right).
\end{equation*}
Combining the `inverse' estimate for finite element approximations, e.g.~\cite[Thm. 3.2.6]{Ciarlet2002},
\begin{equation}\label{eq:inverse}
\norm{f_h}_{L^\infty(\dom)}\leq h^{-\frac{n}{q}}\norm{f_h}_{L^q(\dom)},
\end{equation}
for any $1\leq q<\infty$ with the energy estimate~\eqref{eq:discreteenergyestimate}, we obtain
\begin{equation*}
\norm{\Grad\vphi_h^1-\Grad\vphi^2}_{L^2(\dom)}\leq C_{\epsii} h^{-\frac{n}{2}} \norm{\bd_h^2-\bd_h^1}_{L^2(\dom)}\left(\norm{\Grad\vphi_h^1}_{L^2}+\norm{\Grad\vphi_h^2}_{L^2}\right)\leq C_{\epsii} h^{-\frac{n}{2}} \norm{\bd_h^2-\bd_h^1}_{L^2(\dom)}.
\end{equation*}
When $\bd_h^1$ and $\bd_h^2$ are given by $\bd_h^1 = V(\overline{\bw}^1_h)\bd_h$ and $\bd_h^2 = V(\overline{\bw}^2_h)\bd_h$, we also obtain, using~\eqref{eq:destimate},
\begin{equation}\label{eq:phigradestimate}
\norm{\Grad\vphi_h^1-\Grad\vphi^2}_{L^2(\dom)}\leq C_{\epsii} h^{-\frac{n}{2}}\Delta t \norm{\bw_h^2-\bw_h^1}_{L^2(\dom)}.
\end{equation}
Using the Poincar\'{e} inequality for the difference of $\vphi_h^1$ and $\vphi_h^2$ which has zero trace, we also get
\begin{equation}\label{eq:phiestimate}
\norm{\vphi_h^1-\vphi^2_h}_{L^2(\dom)}\leq C_{\epsii} h^{-\frac{n}{2}}\Delta t \norm{\bw_h^2-\bw_h^1}_{L^2(\dom)}.
\end{equation}
We can therefore write the updates $\vphi_h^{m+1}$ and $\bd_h^{m+1}$ as functions of $(\bd_h^m,\bw_h^m,\vphi_h^m)$ and $\bw_h^{m+1}$, i.e., $\bd_h^{m+1}=V(\bw_h^{m+\nicefrac{1}{2}})\bd_h^m$ and $\vphi^{m+1}_h:=A(V(\bw_h^{m+\nicefrac{1}{2}})\bd_h^m):=\mathcal{A}(\bw_h^{m+1})$ and consider the mapping $\bw_h^{\text{old}}\mapsto \bw^{\text{new}}_h=F(\bw^{\text{old}}_h)$ defined by, for given $(\bd_h,\bw_h,\vphi_h)$, (and $f$ and boundary data $g$)
\begin{equation}\label{eq:fixedpointnewtry}
\begin{split}
\overline{\bw}_h &= \frac{\bw_h+\bw_h^{\text{old}}}{2},\\
	\bd_h^{\text{new}} & = V(\overline{\bw}_h)\bd_h,\\
	\overline{\bd}_h&= \frac{\bd_h+\bd_h^{\text{new}}}{2},\\
	u_{0,h}^{\text{new}}\in V_h,\quad\text{s.t.}\quad &a_h^{\text{new}}(u_{0,h}^{\text{new}},v)=F^{m+1}(v)-a_h^{\text{new}}(\widetilde{g}_h^{m+1},v),\quad \forall v\in V_h,\quad \vphi_h^{\text{new}} := u_{0,h}^{\text{new}}+\widetilde{g}_h^{m+1},\\
\disp\frac{\bw^{\text{new}}_h-\bw_h}{\Delta t} &=\oneconst\Delta_h \overline{\bd}_h \times \overline{\bd}_h-\damp \frac{\bw_h+\bw_h^{\text{new}}}{2}+S_h\times\overline{\bd}_h\\
\end{split}
\end{equation}
where
\begin{equation*}
a_h^{\text{new}}(v,w):=\int_{\dom}\left[\Grad v\cdot \Grad w+\epsii(\bd_h^{\text{new}}\cdot\Grad v)(\bd_h^{\text{new}}\cdot\Grad w)\right] dx
\end{equation*}
(i.e. the bilinear form using the update $\bd_h^{\text{new}}$ as coefficients), and
\begin{equation*}
S_h(x):=\frac{\epsi}{2|\mathcal{C}_{\bi}|}\int_{\mathcal{C}_{\bi}}\Big((\Grad\vphi_h^{\text{new}}(y) \cdot\overline{\bd}_h)\Grad\vphi_h(y)
 +(\Grad\vphi_h(y) \cdot\overline{\bd}_h)\Grad\vphi_h^{\text{new}}(y)\Big) dy,\quad x\in \mathcal{C}_{\bi}.
\end{equation*}
We observe that if $(\bd_h,\bw_h,\vphi_h)=(\bd_h^m,\bw_h^m,\vphi_h^m)$, then $\bw_h^{m+1}$ is a fixed point of~\eqref{eq:fixedpointnewtry}. Therefore, we will show that $\bw_h^{\text{old}}\mapsto \bw^{\text{new}}_h=F(\bw^{\text{old}}_h)$ is a contraction under a suitable time step constraint and hence the fixed point exists.
\begin{lemma}
	\label{lem:contraction}
	The mapping $\bw_h^{\text{old}}\mapsto \bw^{\text{new}}_h=F(\bw^{\text{old}}_h)$ defined by~\eqref{eq:fixedpointnewtry} is a contraction
	\begin{equation*}
	\norm{F(\bw_h^1)-F(\bw_h^2)}_{L^2}\leq q \norm{\bw_h^1-\bw_h^2}_{L^2},
	\end{equation*}
	for $0<q<1$ under the time step constraint
	\begin{equation}\label{eq:CFLnew}
	\Delta t\leq \kappa h^{\max\{1,\nicefrac{n}{2}\}},
	\end{equation}
for a constant $\kappa>0$ sufficiently small.
\end{lemma}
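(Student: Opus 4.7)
The strategy is to subtract the defining equation of $F$ in~\eqref{eq:fixedpointnewtry} evaluated at two inputs $\bw_h^1,\bw_h^2$, take the $L^2$-norm, and estimate the resulting terms using the Lipschitz bounds~\eqref{eq:destimate} and~\eqref{eq:phigradestimate}, the inverse estimate~\eqref{eq:inverse}, and the uniform energy bounds of Lemma~\ref{lem:discreteenergy}. Writing $e_h := F(\bw_h^1)-F(\bw_h^2)$ and using superscripts $1,2$ for the intermediate quantities $\overline{\bd}_h^i,\vphi_h^{\text{new},i},S_h^i$ produced by each input, the last line of~\eqref{eq:fixedpointnewtry} yields
\begin{equation*}
\bigl(\disp+\tfrac{\damp\Dt}{2}\bigr) e_h = \Dt\,\oneconst\bigl(\Delta_h\overline{\bd}_h^1\times\overline{\bd}_h^1 - \Delta_h\overline{\bd}_h^2\times\overline{\bd}_h^2\bigr) + \Dt\bigl(S_h^1\times\overline{\bd}_h^1 - S_h^2\times\overline{\bd}_h^2\bigr).
\end{equation*}
Discarding the nonnegative damping contribution on the left, it suffices to bound the $L^2$ norm of each bracket on the right by $(q\disp/\Dt)\norm{\bw_h^1-\bw_h^2}_{L^2}$ with some fixed $q<1$.

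For each bracket I split
\begin{equation*}
A_h^1\times B_h^1 - A_h^2\times B_h^2 = (A_h^1-A_h^2)\times B_h^1 + A_h^2\times(B_h^1-B_h^2)
\end{equation*}
and estimate each piece in $L^2$ via H\"{o}lder. In the Laplacian bracket, the first piece is controlled using $\norm{\overline{\bd}_h^1}_{L^\infty}\le 1$ (Lemma~\ref{lem:constraint}), the finite-difference inverse bound $\norm{\Delta_h v}_{L^2}\le Ch^{-2}\norm{v}_{L^2}$, and~\eqref{eq:destimate}, giving $Ch^{-2}\Dt\norm{\bw_h^1-\bw_h^2}_{L^2}$. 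For the second piece I combine the energy bound $\norm{\Grad_h\overline{\bd}_h^2}_{L^2}\le C$ from Lemma~\ref{lem:discreteenergy} with an inverse estimate to get $\norm{\Delta_h\overline{\bd}_h^2}_{L^2}\le Ch^{-1}$, and use the $L^\infty$ inverse bound $\norm{w_h}_{L^\infty}\le Ch^{-n/2}\norm{w_h}_{L^2}$ together with~\eqref{eq:destimate} to bound $\norm{\overline{\bd}_h^1-\overline{\bd}_h^2}_{L^\infty}$ by $Ch^{-n/2}\Dt\norm{\bw_h^1-\bw_h^2}_{L^2}$; this piece contributes $Ch^{-1-n/2}\Dt\norm{\bw_h^1-\bw_h^2}_{L^2}$. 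In the source bracket, $S_h^i$ is quadratic in $\Grad\vphi_h,\Grad\vphi_h^{\text{new},i}$ and linear in $\overline{\bd}_h^i$: after splitting and applying~\eqref{eq:phigradestimate} (which yields $\norm{\Grad\vphi_h^{\text{new},1}-\Grad\vphi_h^{\text{new},2}}_{L^2}\le Ch^{-n/2}\Dt\norm{\bw_h^1-\bw_h^2}_{L^2}$) together with two $L^\infty$-$L^2$ inverse estimates~\eqref{eq:inverse} on $\Grad\vphi_h$ and $\Grad\vphi_h^{\text{new}}$ (each producing a factor $h^{-n/2}$), both pieces are bounded by $Ch^{-n}\Dt\norm{\bw_h^1-\bw_h^2}_{L^2}$.

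Combining the estimates, the Lipschitz constant of $F$ is bounded by $C\bigl(\Dt^2 h^{-2}+\Dt^2 h^{-1-n/2}+\Dt^2 h^{-n}\bigr)$, which can be made strictly less than any preassigned $q<1$ provided $\Dt\le\kappa h^{\max\{1,(2+n)/4,n/2\}}=\kappa h^{\max\{1,n/2\}}$ with $\kappa>0$ sufficiently small, matching~\eqref{eq:CFLnew}. The main obstacle is that Lemma~\ref{lem:discreteenergy} provides only $L^2$ control of first derivatives of $\bd_h$ and $\vphi_h$, so every passage to $L^\infty$ or to the Laplacian must be purchased through inverse estimates; in low dimension ($n\le 2$) the discrete Laplacian forces the classical parabolic-type CFL $\Dt\lesssim h$, whereas in $n=3$ the $L^\infty$ control of $\Grad\vphi_h$ bought at cost $h^{-n/2}=h^{-3/2}$ in the source term becomes the binding constraint $\Dt\lesssim h^{3/2}$. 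Once contraction is in place, Banach's fixed-point theorem yields a unique $\bw_h^{m+1}$, from which $\bd_h^{m+1}$ and $\vphi_h^{m+1}$ are recovered by a single application of $V$ and one elliptic solve.
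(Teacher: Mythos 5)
Your overall strategy mirrors the paper's exactly: subtract the update equations for $\bw_h^{\text{new}}$ at two inputs, discard the nonnegative damping contribution, split the cross-product differences via a ``telescoping'' decomposition, and control each piece using the Lipschitz bounds~\eqref{eq:destimate} and~\eqref{eq:phigradestimate}, the inverse estimate~\eqref{eq:inverse}, the constraint $\norm{\bd_h}_{L^\infty}\le 1$, and the a priori $L^2$ bounds on $\Grad\vphi_h$. The resulting constraint $\Dt\lesssim h^{\max\{1,n/2\}}$ matches~\eqref{eq:CFLnew}.

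There is, however, a gap in your treatment of the second piece of the Laplacian bracket, $\Delta_h\overline{\bd}_h^2 \times (\overline{\bd}_h^1-\overline{\bd}_h^2)$. You estimate $\norm{\Delta_h\overline{\bd}_h^2}_{L^2}\le Ch^{-1}$ by invoking $\norm{\Grad_h\overline{\bd}_h^2}_{L^2}\le C$ from Lemma~\ref{lem:discreteenergy} together with an inverse estimate. But the energy estimate controls $\Grad_h\bd_h^m$, the actual iterate of the scheme, not the trial field $\bd_h^{2,\text{new}}=V(\overline{\bw}_h^2)\bd_h^m$ produced from an \emph{arbitrary} input $\bw_h^2$ during the fixed-point iteration. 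The rotation $V(\overline{\bw}_h^2)$ varies from cell to cell with no control on $\Grad_h V$, so there is no uniform bound on $\norm{\Grad_h\bd_h^{2,\text{new}}}_{L^2}$. The only uniform information available for a general iterate is the pointwise constraint $\abs{\bd_h^{2,\text{new}}}=1$ (since $V$ is a rotation), which gives only $\norm{\Delta_h\overline{\bd}_h^2}_{L^\infty}\le Ch^{-2}$ (and hence $\norm{\Delta_h\overline{\bd}_h^2}_{L^2}\le Ch^{-2}$). If you repair your estimate by replacing $h^{-1}$ with the honest $h^{-2}$, the H\"older pairing you chose (Laplacian in $L^2$, difference in $L^\infty$) then yields a $\Dt^2 h^{-2-n/2}$ contribution, which forces $\Dt\lesssim h^{1+n/4}$ --- strictly more restrictive than~\eqref{eq:CFLnew} for $n\ge 2$. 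The paper avoids this by reversing the H\"older pairing: put the Laplacian factor in $L^\infty$, bounded by $Ch^{-2}$ via the constraint alone, and the difference $\overline{\bd}_h^1-\overline{\bd}_h^2$ in $L^2$, bounded by $C\Dt\norm{\bw_h^1-\bw_h^2}_{L^2}$ via~\eqref{eq:destimate}. That pairing keeps the contribution at $\Dt^2 h^{-2}$, matching the first piece, and does not require any $H^1$ control on the trial field. You should switch the exponents accordingly; the rest of your estimates (for the source bracket and the electric field differences) go through as written.
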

\begin{proof}
	We denote $\bw^{1,\text{new}}_h=F(\bw^{1}_h)$ and $\bw^{2,\text{new}}_h=F(\bw^{2}_h)$ two solutions with starting values $\bw_h^i$, $i=1,2$ and $\overline{\bw}_h^i=0.5(\bw_h+\bw_h^i)$, $\bd_h^i:=\bd_h^{i,\text{new}}$ and $\vphi_h^i:=\vphi_h^{i,\text{new}}$.
	We rewrite the equation for $\bw_h^{i,\text{new}}$ as
	\begin{equation*}
	\left(\disp+\frac{\damp\Delta t}{2}\right)\bw^{i,\text{new}}_h =\Delta t\oneconst\Delta_h \overline{\bd}_h^i \times \overline{\bd}_h^i+\left(\disp-\frac{ \damp\Delta t}{2} \right)\bw_h +\Dt S_h^i\times\overline{\bd}_h^i
	\end{equation*}
	We subtract the equation for $\bw_h^{1,\text{new}}$ from the equation for $\bw_h^{2,\text{new}}$ to obtain
		\begin{equation*}
	\begin{split}
	&\left(\disp+\frac{\damp\Delta t}{2}\right)(\bw^{2,\text{new}}_h-\bw^{1,\text{new}}_h)\\
	 &=\underbrace{\frac{1}{2}\Delta t\oneconst\Delta_h \left(\bd_h^2-\bd_h^1\right)  \times \overline{\bd}_h^2}_{A_1}+\underbrace{\frac{1}{2}\Delta t\oneconst\Delta_h \overline{\bd}_h^1  \times \left(\bd_h^2-\bd_h^1\right)}_{A_2}+\underbrace{\frac{\Delta t}{2} S_h^2\times (\bd_h^2-\bd_h^1)}_{B}\\
&\quad +
\underbrace{\sum_{\bi}\frac{\epsi\Dt}{4|\mathcal{C}_{\bi}|}\int_{\mathcal{C}_{\bi}}\!\!\!\Big((\Grad\vphi_h^{2}(y) \cdot({\bd}_h^2-{\bd}_h^1))\Grad\vphi_h(y)
+(\Grad\vphi_h(y) \cdot({\bd}_h^2-{\bd}_h^1))\Grad\vphi_h^{2}(y)\Big) dy\mathbf{1}_{\mathcal{C}_{\bi}}(x) \times\overline{\bd}_h^1}_{D}\\
&\quad +
\underbrace{\sum_{\bi}\frac{\epsi\Dt}{2|\mathcal{C}_{\bi}|}\int_{\mathcal{C}_{\bi}}\!\!\!\Big((\Grad(\vphi_h^{2}-\vphi_h^1)(y) \cdot\overline{\bd}_h^1)\Grad\vphi_h(y)
	+(\Grad\vphi_h(y) \cdot\overline{\bd}_h^1)\Grad(\vphi_h^{2}-\vphi_h^{1})(y)\Big) dy\mathbf{1}_{\mathcal{C}_{\bi}}(x) \times\overline{\bd}_h^1}_{E}.
	\end{split}
	\end{equation*}
	Thus,
	\begin{equation*}
	\norm{\bw^{2,\text{new}}_h-\bw^{1,\text{new}}_h}_{L^2}\leq \frac{1}{\disp+\frac{\damp\Delta t}{2}}\left(\sum_{i=1}^2\norm{A_i}_{L^2}+\norm{B}_{L^2}+\norm{D}_{L^2}+\norm{E}_{L^2}\right),
	\end{equation*}
	and we have to bound each of these terms. Again, we will frequently use the estimate
	\begin{equation*}
	\norm{\bd_h^i}_{L^\infty}\leq 1,
	\end{equation*}
	which follows because $\bd_h^i$ satisfy the unit length constraint (see~\cite{Karper2014} for a proof of this fact via analyzing the matrix $V$ in~\eqref{eq:V}). We will denote by $C$ a generic constant that does not depend on $h$ or $\Delta t$. For $A_1$, we have
	\begin{equation}
	\label{eq:A1}
	\norm{A_1}_{L^2}\leq \frac{k\Delta t}{2}\norm{\Delta_h(\bd_h^2-\bd_h^1)}_{L^2}\norm{\overline{\bd}_h^2}_{L^\infty}\leq \frac{C \Delta t}{h^2}\norm{\bd_h^2-\bd_h^1}_{L^2}\stackrel{\eqref{eq:destimate}}{\leq } \frac{C \Delta t^2}{h^2}\norm{\bw_h^2-\bw_h^1}_{L^2}.
	\end{equation}
	Similarly, we find for $A_2$,
	\begin{equation}
	\label{eq:A2}
	\norm{A_2}_{L^2}\leq \frac{k\Delta t}{2}\norm{\bd_h^2-\bd_h^1}_{L^2}\norm{\Delta_h\overline{\bd}_h^2}_{L^\infty}\leq \frac{C \Delta t}{h^2}\norm{\bd_h^2-\bd_h^1}_{L^2}\stackrel{\eqref{eq:destimate}}{\leq } \frac{C \Delta t^2}{h^2}\norm{\bw_h^2-\bw_h^1}_{L^2}.
	\end{equation}
	For $B$, we have
	\begin{equation*}
\begin{split}
	\norm{B}_{L^2}&\leq C\Delta t \norm{\bd_h^2-\bd_h^1}_{L^2}\norm{\Grad\vphi^2_h}_{L^\infty}\norm{\Grad\vphi_h}_{L^\infty}\norm{\overline{\bd}_h^2}_{L^\infty}\\
	&\leq \frac{C\Delta t}{h^n} \norm{\bd_h^2-\bd_h^1}_{L^2}\norm{\Grad\vphi^2_h}_{L^2}\norm{\Grad\vphi_h}_{L^2},
\end{split}
	\end{equation*}
	using the inverse estimate~\eqref{eq:inverse}. Using then the energy estimate~\eqref{eq:discreteenergyestimate} and~\eqref{eq:destimate}, we obtain
	\begin{equation}
	\label{eq:B1}
	\norm{B}_{L^2}\leq \frac{C \Delta t^2}{h^n}\norm{\bw_h^2-\bw_h^1}_{L^2}.
	\end{equation}
	We turn to the term $D$:
	\begin{align}\label{eq:D1}
	\begin{split}
		\norm{D}_{L^2}&\leq C\Delta t \norm{\bd_h^2-\bd_h^1}_{L^2}\norm{\Grad\vphi_h}_{L^\infty}\norm{\Grad\vphi_h^2}_{L^\infty}\norm{\overline{\bd}_h^1}_{L^\infty}\\
	&\leq \frac{C\Delta t}{h^n} \norm{\bd_h^2-\bd_h^1}_{L^2}\norm{\Grad\vphi_h}_{L^2}\norm{\Grad\vphi_h^2}_{L^2}\\
	&\leq \frac{C\Delta t^2}{h^n} \norm{\bw_h^2-\bw_h^1}_{L^2},	
	\end{split}
\end{align}
	using again the inverse bound~\eqref{eq:inverse}, the energy estimate and~\eqref{eq:destimate}. 
	We next estimate term $E$:
	\begin{align*}
	\norm{E}_{L^2}&\leq C\Delta t\norm{\overline{\bd}_h^1}_{L^\infty}^2 \norm{\Grad\vphi_h}_{L^\infty}\norm{\Grad(\vphi_h^2-\vphi_h^1)}_{L^2(\dom)}\\
     &\stackrel{\eqref{eq:inverse}}{\leq}\frac{C\Delta t}{h^{\frac{n}{2}}}\norm{\Grad\vphi_h}_{L^2}\norm{\Grad(\vphi_h^2-\vphi_h^1)}_{L^2(\dom)}\\
     & \leq \frac{C\Delta t}{h^{\frac{n}{2}}}\norm{\Grad(\vphi_h^2-\vphi_h^1)}_{L^2(\dom)},
	\end{align*}
	where we used the energy estimate for the last inequality. Now using~\eqref{eq:phigradestimate}, we obtain
	\begin{equation}
	\label{eq:E1}
	\norm{E}_{L^2}\leq \frac{C\Delta t^2}{h^n}\norm{\bw_h^1-\bw_h^2}_{L^2}.
	\end{equation}
	Combining~\eqref{eq:A1}, \eqref{eq:A2}, \eqref{eq:B1}, \eqref{eq:D1},  and \eqref{eq:E1}, we obtain
	\begin{equation*}
	\norm{\bw_h^{2,\text{new}}-\bw_h^{1,\text{new}}}_{L^2}\leq \frac{C}{\disp+\frac{\damp\Delta t}{2}}\left(\frac{\Delta t^2}{h^2} + \frac{\Delta t^2}{h^n}\right)\norm{\bw_h^2-\bw_h^1}_{L^2}.
	\end{equation*}
	So if we take $\Delta t\leq \kappa h^{\max\{1,\nicefrac{n}{2}\}}$ for $\kappa$ sufficiently small 
	we obtain that $F$ is a contraction.
\end{proof}
The previous lemma will imply that the following algorithm converges. We denote $\theta =\max\{1,\nicefrac{n}{2}\}$.
\equationbox{
	\begin{definition}\label{def:fixedp}
		Given $h >0 $, $\Delta t = \kappa h^\theta$, and functions 
		$(\bd^m_h, \bw^m_h,\vphi_h^m)$ satisfying \eqref{eq:m2}, we approximate
		the next time-step $(d^{m+1}_h, w^{m+1}_h,\vphi_h^{m+1})$ to a given 
		tolerance $\varepsilon > 0$ by the following procedure: Set
		\begin{equation*}
		(\bd_h^{m, 0}, \bw_h^{m,0},\vphi_h^{m,0}) = (\bd_h^m, \bw_h^m,\vphi_h^m),
		\end{equation*}
		and iteratively solve $(\bd_h^{m,s+1}, \bw_h^{m,s+1},\vphi_h^{m,s+1})$ satisfying
		\begin{equation}\label{num:fixed}
		\begin{split}
		\frac{\bd^{m,s+1}_h-\bd^m_h}{\Delta t} &=  \bd_h^{m,s+\nicefrac{1}{2}}  \times \frac{1}{2}\left(\bw^{m}_h + \bw^{m,s}_h\right), \\
		u_{0,h}^{m,s+1}\in V_h,\quad\text{s.t.}\quad &a_h^{m,s+1}(u_{0,h}^{m,s+1},v)=F^{m+1}(v)-a_h^{m,s+1}(\widetilde{g}_h^{m+1},v),\quad \forall v\in V_h,\\
		\vphi_h^{m,s+1}& := u_{0,h}^{m,s+1}+\widetilde{g}_h^{m+1},\\		
		\disp\frac{\bw^{m,s+1}_h-\bw^m_h}{\Delta t} &= \oneconst \Delta_h \bd_h^{m,s+\nicefrac{1}{2}}  \times  \bd_h^{m,s+\nicefrac{1}{2}}-\damp \frac{\bw^{m,s+1}_h+\bw_h^m}{2}+S_h\times\bd_h^{m,s+\nicefrac{1}{2}}\end{split}
		\end{equation}
		where $\bd^{m,s+\nicefrac{1}{2}}_{h}:=(\bd^m_h+ \bd^{m,s+1}_h)/2$,
		\begin{equation*}
		a_h^{m,s+1}(v,w):=\int_{\dom}\left[\Grad v\cdot \Grad w+\epsii(\bd_h^{m,s+1}\cdot\Grad v)(\bd_h^{m,s+1}\cdot\Grad w)\right] dx,
		\end{equation*}
	and
		\begin{equation*}
		S_h(x):=\frac{\epsi}{2|\mathcal{C}_{\bi}|}\int_{\mathcal{C}_{\bi}}\Big((\Grad\vphi_h^{m,s+1} \cdot{\bd}^{m,s+\nicefrac{1}{2}}_h)\Grad\vphi_h^m
		+(\Grad\vphi_h^m \cdot\bd^{m,s+\nicefrac{1}{2}}_h)\Grad\vphi_h^{m,s+1}\Big) dy,\quad x\in \mathcal{C}_{\bi}.
		\end{equation*} 
		until the following stopping criterion is met:
		\begin{equation}\label{eq:stop}
		\left\|\bw_h^{m, s+1} - \bw_h^{m,s}\right\|_{L^2(\dom)} + \left\|\Grad_h \bd_h^{m,s+1} - \Grad_h \bd_h^{m,s}\right\|_{L^2(\dom)}+	\left\|\Grad \vphi_h^{m, s+1} - \Grad \vphi_h^{m,s}\right\|_{L^2(\dom)} < \varepsilon.
		\end{equation}		
	\end{definition}
}

We note that~\eqref{num:fixed} corresponds to the nonlinear mapping $\bw_h^{m,s}\mapsto F(\bw_h^{m,s})=\bw_h^{m,s+1}$.
\begin{theorem}
	\label{lem:convalgo}
	For every $\varepsilon_0>0$ there is a number of iterations $s\in\mathbb{N}$ such that the stopping criterion~\eqref{eq:stop} is satisfied with $\varepsilon=\varepsilon_0$ and moreover
	\begin{equation*}
	\norm{\bw_h^{m+1}-\bw_h^{m,s+1}}_{L^2}+\norm{\Grad_h\bd_h^{m+1}-\Grad_h\bd_h^{m,s+1}}_{L^2}+\norm{\Grad\vphi_h^{m+1}-\Grad\vphi_h^{m,s+1}}_{L^2}<\varepsilon_0.
	\end{equation*}
\end{theorem}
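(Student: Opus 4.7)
The plan is to apply the Banach fixed point theorem based on Lemma~\ref{lem:contraction}, then translate the resulting geometric convergence of $\bw_h^{m,s}$ into the required tri-norm convergence.

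First, observe that by construction the iteration in Definition~\ref{def:fixedp} is precisely the mapping $\bw_h^{m,s}\mapsto F(\bw_h^{m,s})=\bw_h^{m,s+1}$ introduced before Lemma~\ref{lem:contraction}, with base data $(\bd_h,\bw_h,\vphi_h)=(\bd_h^m,\bw_h^m,\vphi_h^m)$. Under the CFL restriction~\eqref{eq:CFLnew}, Lemma~\ref{lem:contraction} gives a contraction constant $0<q<1$ (independent of $s$ for fixed $h,\Dt$), so the Banach fixed point theorem guarantees a unique fixed point $\bw_h^{m+1}$ of $F$, which by construction is the exact solution of \eqref{eq:m2} at time $t^{m+1}$, together with the corresponding $\bd_h^{m+1}=V(\bw_h^{m+\nicefrac{1}{2}})\bd_h^m$ and $\vphi_h^{m+1}=\mathcal{A}(\bw_h^{m+1})$. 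Moreover,
\begin{equation*}
\norm{\bw_h^{m,s+1}-\bw_h^{m+1}}_{L^2}\leq q^{s+1}\norm{\bw_h^{m,0}-\bw_h^{m+1}}_{L^2}=:q^{s+1} R_0,
\end{equation*}
where $R_0$ is finite thanks to the a priori bounds of Lemma~\ref{lem:discreteenergy}.

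Next, I would propagate this geometric decay to $\bd_h^{m,s}$ and $\vphi_h^{m,s}$. Applying estimate~\eqref{eq:destimate} with $\bw_h^1=\bw_h^{m,s+1}$, $\bw_h^2=\bw_h^{m+1}$ (and base $\bw_h=\bw_h^m$) yields
\begin{equation*}
\norm{\bd_h^{m,s+1}-\bd_h^{m+1}}_{L^2}\leq C\Dt\,q^{s+1}R_0,
\end{equation*}
and by a small variant of the argument producing~\eqref{eq:destimate} (applied after a discrete gradient, or by writing $\Grad_h(V(\cdot)\bd_h^m)$ via finite differences and using that $V$ is globally Lipschitz on bounded sets together with the inverse estimate~\eqref{eq:inverse} if needed), one obtains $\norm{\Grad_h\bd_h^{m,s+1}-\Grad_h\bd_h^{m+1}}_{L^2}\leq C h^{-1}\Dt\,q^{s+1}R_0$. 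Likewise, \eqref{eq:phigradestimate} gives
\begin{equation*}
\norm{\Grad\vphi_h^{m,s+1}-\Grad\vphi_h^{m+1}}_{L^2}\leq C h^{-n/2}\Dt\,q^{s+1}R_0.
\end{equation*}
For fixed $h,\Dt$ the prefactors are constants, so the sum of the three norms is bounded by $C(h,\Dt)\,q^{s+1}R_0$. Choosing $s$ large enough makes this smaller than $\varepsilon_0/2$, which proves the second assertion.

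Finally, for the stopping criterion, the triangle inequality gives
\begin{equation*}
\norm{\bw_h^{m,s+1}-\bw_h^{m,s}}_{L^2}\leq \norm{\bw_h^{m,s+1}-\bw_h^{m+1}}_{L^2}+\norm{\bw_h^{m+1}-\bw_h^{m,s}}_{L^2}\leq (q^{s+1}+q^s)R_0,
\end{equation*}
and analogously for the $\Grad_h\bd_h$ and $\Grad\vphi_h$ differences (with the same $h$- and $\Dt$-dependent prefactors as above). Since $q<1$, by enlarging $s$ further if necessary one can make the sum on the left of~\eqref{eq:stop} smaller than $\varepsilon_0$ as well. Taking the larger of the two required iteration counts yields a single $s$ for which both conclusions hold. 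The only mildly delicate point is the $\Grad_h\bd_h$ estimate, since Lemma~\ref{lem:contraction} is stated in the $L^2$-norm of $\bw_h$ rather than a discrete $H^1$-norm of $\bd_h$; however, for fixed $h$ this is handled by the trivial inverse bound $\norm{\Grad_h v_h}_{L^2}\leq C h^{-1}\norm{v_h}_{L^2}$, which suffices because we only need qualitative convergence in $s$ at fixed $h,\Dt$.
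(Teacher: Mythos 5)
Your proof takes essentially the same route as the paper: invoke the contraction from Lemma~\ref{lem:contraction}, deduce geometric decay of $\norm{\bw_h^{m,s}-\bw_h^{m+1}}_{L^2}$, propagate to $\Grad_h\bd_h^{m,s}$ via~\eqref{eq:destimate} combined with an inverse estimate and to $\Grad\vphi_h^{m,s}$ via~\eqref{eq:phigradestimate}, then use the triangle inequality for the stopping criterion. The one cosmetic difference is that the paper uses the CFL condition~\eqref{eq:CFLnew} to absorb the $\Dt/h$ and $\Dt/h^{n/2}$ prefactors into $h$-independent constants, whereas you correctly observe that for fixed $h,\Dt$ these are just constants, which is all the theorem statement needs; also note a small index slip ($\bd_h^{m,s+1}$ arises from $\bw_h^{m,s}$, not $\bw_h^{m,s+1}$), which only shifts the exponent of $q$ by one and does not affect the conclusion.
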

\begin{proof}
We let $\varepsilon>0$ arbitrary. Using that $\bw_h^{m+1}$ is a fixed point of $F$, we have
\begin{equation*}
\begin{split}
	\norm{\bw_h^{m+1}-\bw_h^{m,s}}_{L^2}& = 	\norm{F(\bw_h^{m+1})-F(\bw_h^{m,s-1})}_{L^2}\\
	&\stackrel{\text{Lem. } \ref{lem:contraction}}{\leq} q	\norm{\bw_h^{m+1}-\bw_h^{m,s-1}}_{L^2}\\
	&\leq q^{s}	\norm{\bw_h^{m+1}-\bw_h^{m}}_{L^2},
\end{split}
\end{equation*}
applying Lemma~\ref{lem:contraction} $s-1$ more times. Using the energy estimate, we then obtain
\begin{equation*}
\norm{\bw_h^{m+1}-\bw_h^{m,s}}_{L^2}\leq 2 q^s \sqrt{\frac{E_0}{\disp}}.
\end{equation*}
Using an inverse estimate and~\eqref{eq:destimate}, we also obtain
\begin{equation}
\begin{split}
\norm{\Grad_h\bd_h^{m,s}-\Grad_h\bd_h^{m+1}}_{L^2}&\leq \frac{C}{h}\norm{V(\bw_{h}^{m+\nicefrac{1}{2}})\bd_h^m-V\left((\bw_h^{m}+\bw_h^{m,s-1})/2\right)\bd_h^m}_{L^2}\\
&\leq \frac{C\Delta t}{h}\norm{\bw_h^{m+1}-\bw_h^{m,s-1}}_{L^2}\\
&\leq \frac{C\Delta t}{h} q^{s-1} \sqrt{\frac{E_0}{\disp}}\\
&\leq C q^s \sqrt{E_0},
\end{split}
\end{equation}
where the last inequality follows from the CFL-condition~\eqref{eq:CFLnew}.
Similarly, using~\eqref{eq:phigradestimate}, we obtain
\begin{equation*}
\begin{split}
\norm{\Grad\vphi^{m+1}_h-\Grad\vphi_h^{m,s}}_{L^2}&\leq  C \frac{\Delta t}{h^{\frac{n}{2}}}\norm{\bw_h^{m+1}-\bw_h^{m,s-1}}_{L^2}\\
&\leq \frac{C\Delta t}{h^{\frac{n}{2}}} q^{s-1} \sqrt{\frac{E_0}{\disp}}\\
&\leq C q^s \sqrt{E_0},
\end{split}
\end{equation*}
where the last inequality again follows from the CFL-condition. Thus,
\begin{equation*}
\norm{\bw^{m+1}_h-\bw_h^{m,s}}_{L^2}+\norm{\Grad\vphi^{m+1}_h-\Grad\vphi_h^{m,s}}_{L^2}+\norm{\Grad_h\bd^{m+1}_h-\Grad_h\bd_h^{m,s}}_{L^2}\leq C q^s \sqrt{E_0}.
\end{equation*}
Next, by triangle inequality
\begin{equation*}
\norm{\bw^{m,s+1}_h-\bw_h^{m,s}}_{L^2}\leq \norm{\bw^{m+1}_h-\bw_h^{m,s+1}}_{L^2}+\norm{\bw^{m+1}_h-\bw_h^{m,s}}_{L^2}\leq 4 q^s \sqrt{\frac{E_0}{\disp}}.
\end{equation*}
By the same argument, we obtain,
\begin{equation*}
\norm{\Grad_h\bd^{m,s+1}_h-\Grad_h\bd_h^{m,s}}_{L^2}\leq 2 Cq^s \sqrt{E_0}\quad\text{ and }\quad \norm{\Grad\vphi^{m,s+1}_h-\Grad\vphi_h^{m,s}}_{L^2}\leq 2 Cq^s \sqrt{E_0}.
\end{equation*}
Hence
\begin{equation*}
\norm{\bw^{m,s+1}_h-\bw_h^{m,s}}_{L^2}+\norm{\Grad\vphi^{m,s+1}_h-\Grad\vphi_h^{m,s}}_{L^2}+\norm{\Grad_h\bd^{m,s+1}_h-\Grad_h\bd_h^{m,s}}_{L^2}\leq C q^s \sqrt{E_0},
\end{equation*}
and we can pick $s$ large enough such that $C q^s \sqrt{E_0}<\varepsilon_0$. 
\end{proof}
This proves that the system~\eqref{eq:m2} has a unique solution that can be found for example by fixed point iteration or some other iterative procedure.
\section{Numerical experiments}\label{sec:num}
In the following, we test the numerical scheme on several examples for a 2-dimensional spatial domain. We assume that there are no free charges and set $f\equiv 0$. For all experiments, we will use the parameters
\begin{equation*}
h = \frac{1}{64},\quad \Dt = h\frac{\sqrt{\damp h^2+\disp}}{10},\quad  \dom = [-0.5,0.5]^2,\quad \disp=\frac12,\quad \oneconst =1,\quad \varepsilon = \frac{h^2}{20};
\end{equation*}
and
\begin{equation*}
\widetilde{g}(t,x,y) = 10\sin(2\pi t+0.2) (x+0.5)\sin(\pi y),\quad \bd_1\equiv \bw_0\equiv 0.
\end{equation*}
while varying the other parameters and initial data.

\medskip

The MATLAB code used to run the numerical experiments can be found at\\
 \url{https://github.com/fraenschii/director_efield_dynamics}.
\subsection{Alignment with electric field}
In the first numerical experiment, we test the claim that the director field aligns parallel to the director field when $\eps_1,\eps_2>0$ (corresponding to $\eps_a>0$) and perpendicular when $\eps_1,\eps_2<0$ (corresponding to $\eps_a<0$). We use $\damp =2$, and run the experiment up to time $T=2$. As initial data, we use 
\begin{equation*}
\bd_0(x)\equiv \frac{1}{\sqrt{2}}\begin{pmatrix}
1\\1\\0.
\end{pmatrix}
\end{equation*}
(So here the director field can only vary in the plane.) We first use $\epsi=5$ and $\epsii=0.5$ and then in a second experiment $\epsi=-5$ and $\epsii=-0.5$. We plot the approximations to the director field and the electric field in Figure~\ref{fig:exp1pos} and Figure~\ref{fig:exp1neg}.
\begin{figure}[ht]
	\centering
\begin{tabular}{lcr}
	\hspace{-1.5em}\subfloat[$t=0.25$]{\includegraphics[width=0.4\linewidth]{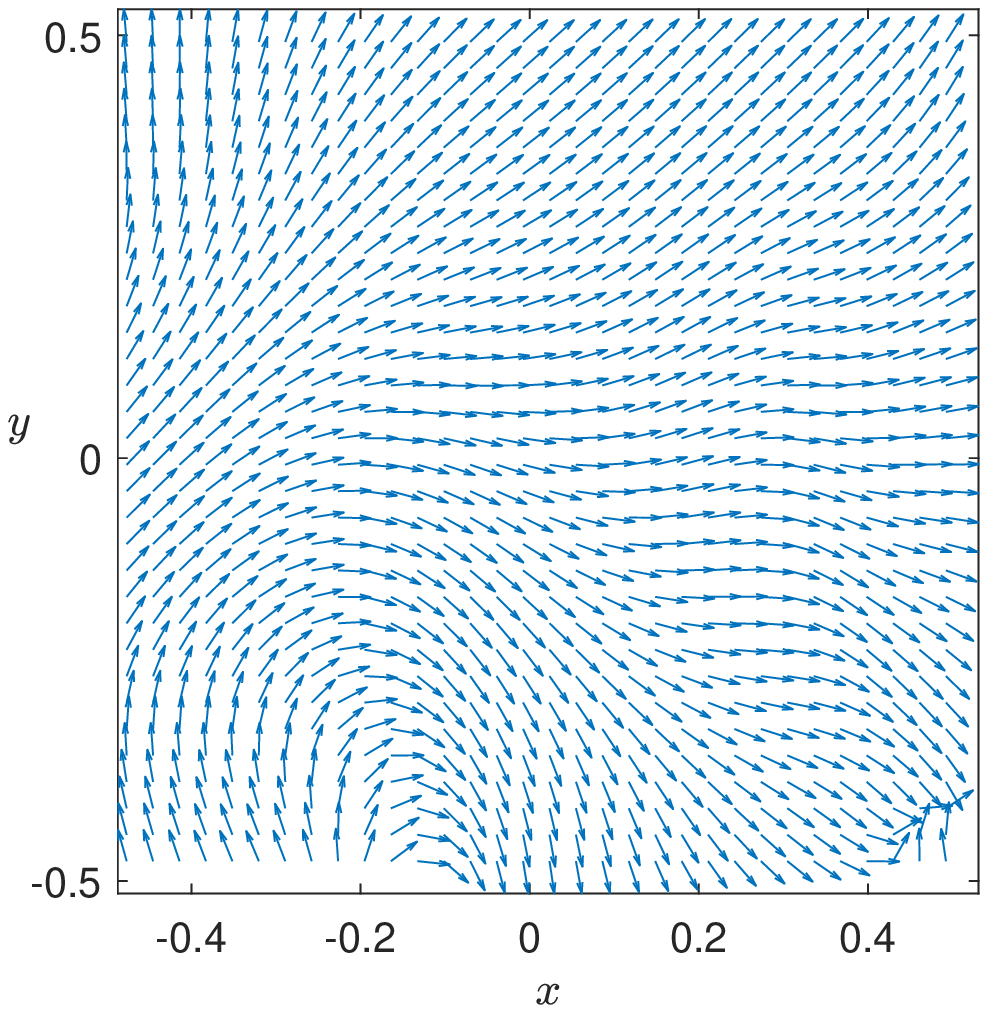}}\hspace{-2.25em}
	\subfloat[$t=0.5$]{\includegraphics[width=0.4\linewidth]{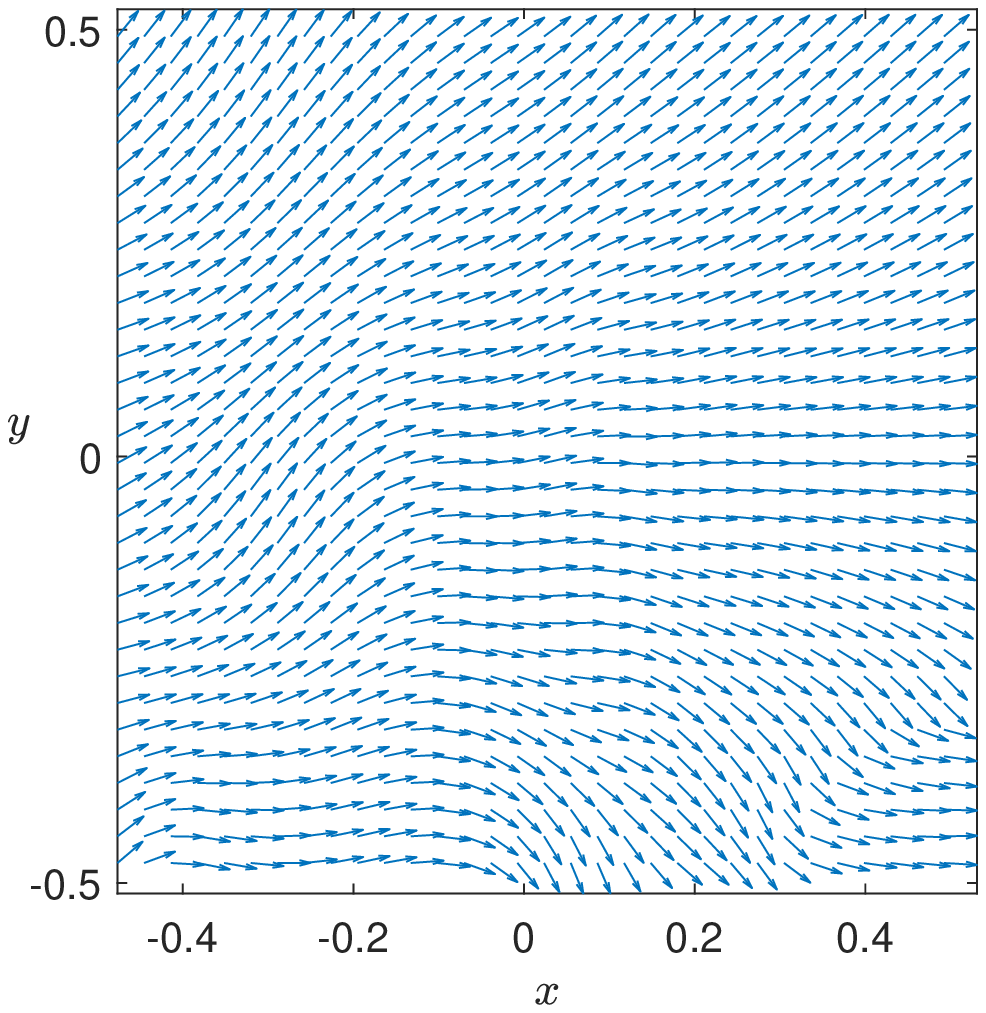}}\hspace{-2.25em}
	\subfloat[$t=2$]{\includegraphics[width=0.4\linewidth]{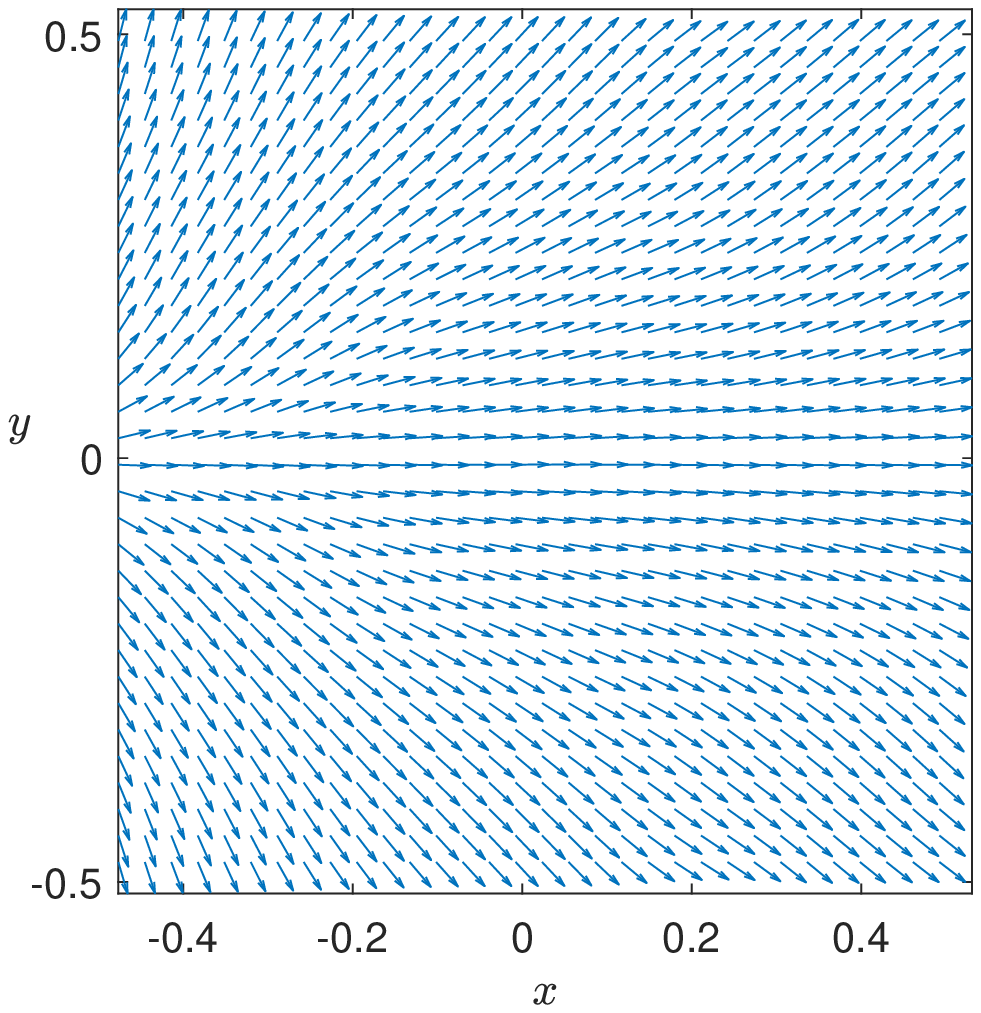}} \\
	\hspace{-1.5em}\subfloat[$t=0.25$]{\includegraphics[width=0.4\linewidth]{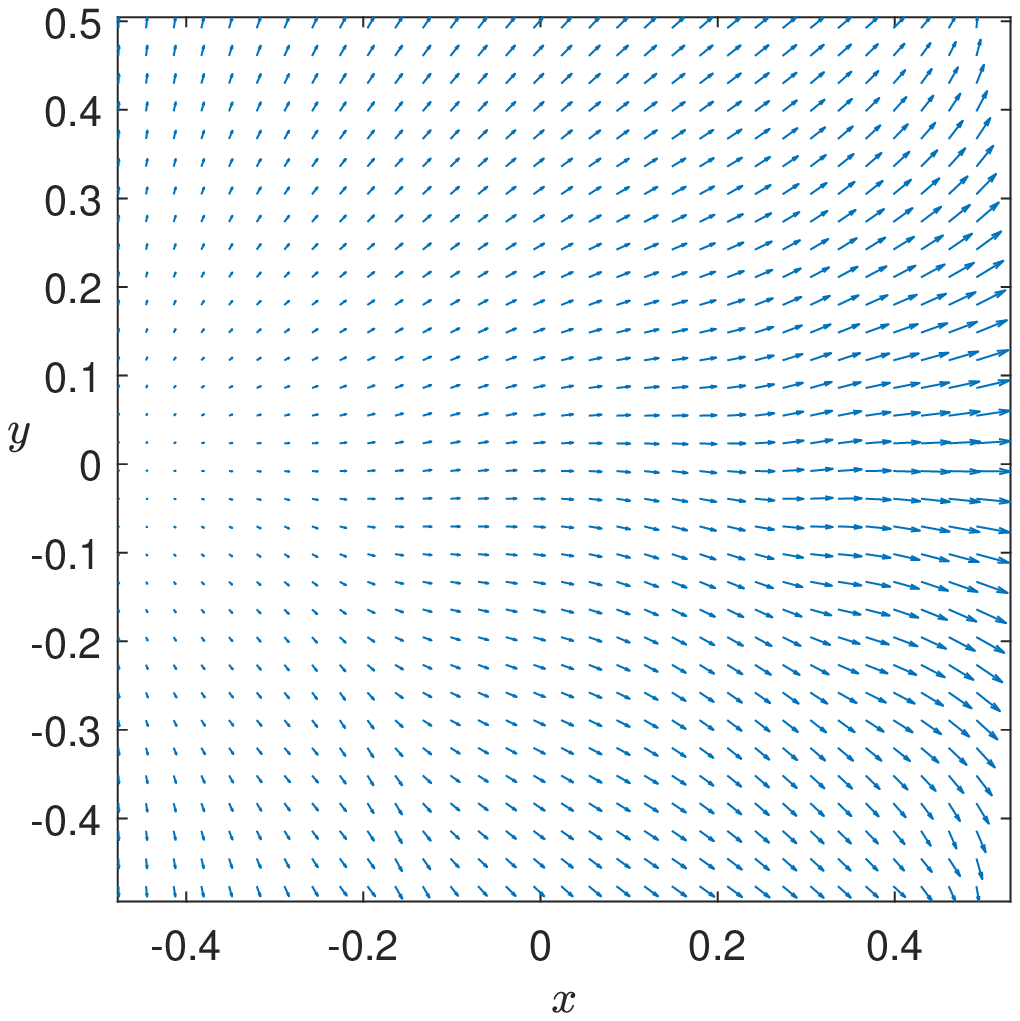}}\hspace{-2.25em}
	\subfloat[$t=0.5$]{\includegraphics[width=0.4\linewidth]{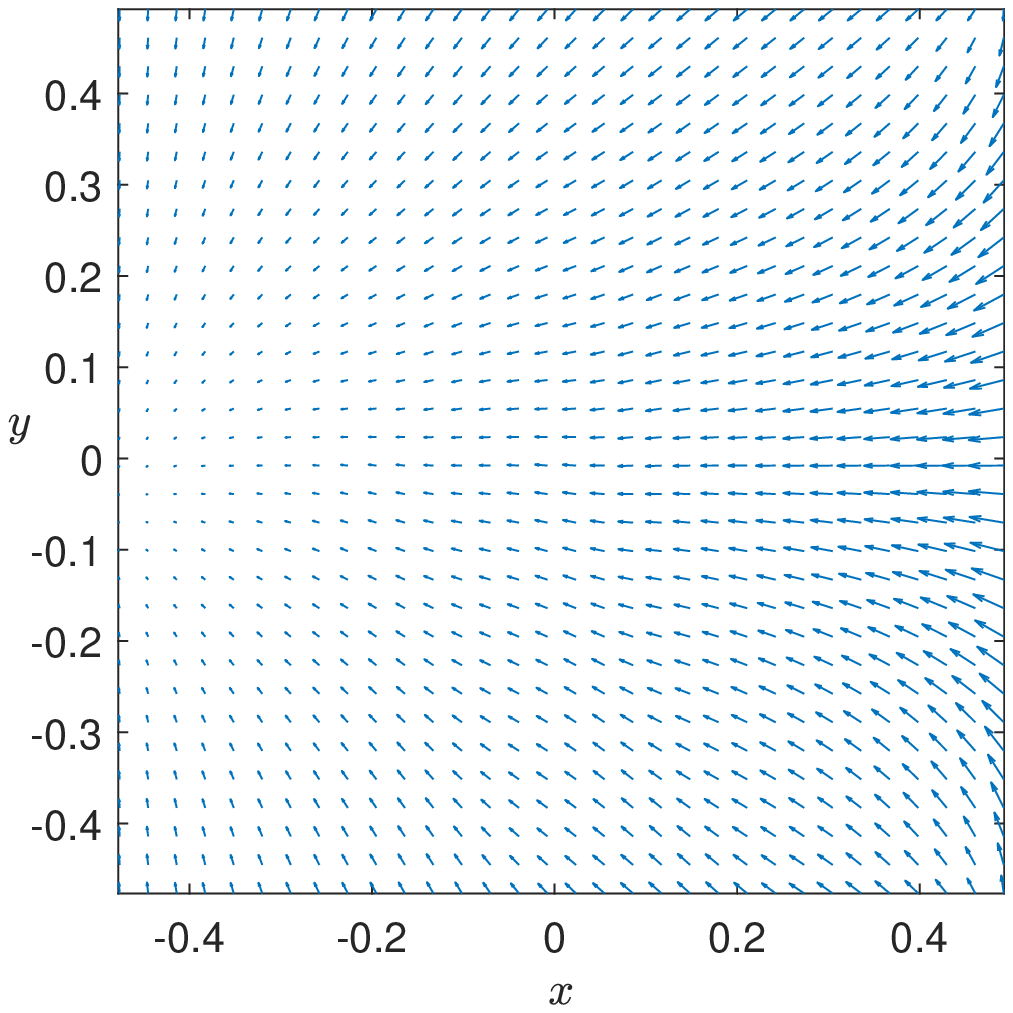}}\hspace{-2.25em}
	\subfloat[$t=2$]{\includegraphics[width=0.4\linewidth]{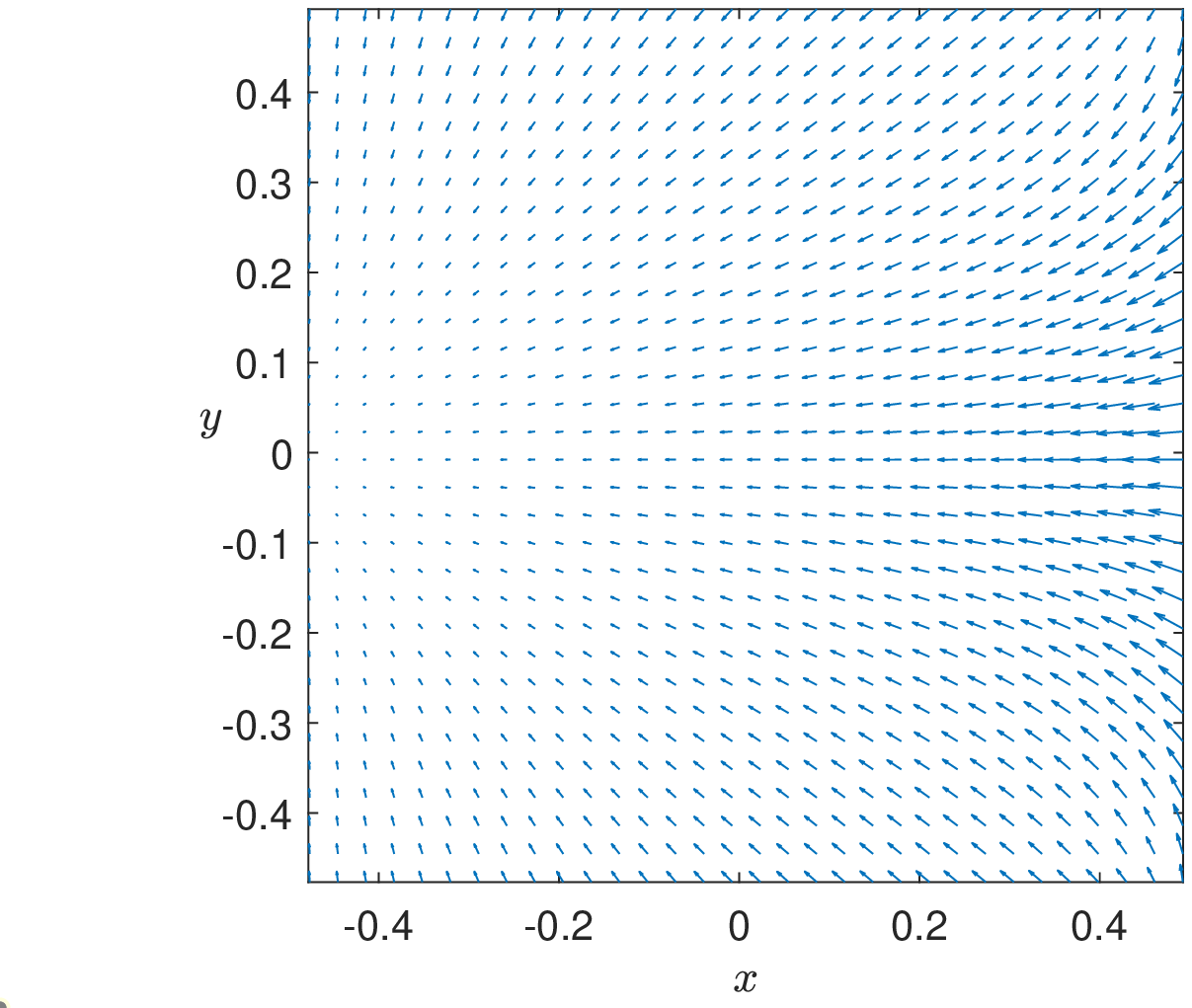}} 
\end{tabular}
	\caption{Experiment 1 with $\epsi=5$ and $\epsii=0.5$. Top row: Director field $\bd_h$, bottom row: Electric field $\Grad\vphi_h$.}
	\label{fig:exp1pos}
\end{figure}
\begin{figure}[ht]
	\centering
	\begin{tabular}{lcr}
		 \hspace{-1.5em}\subfloat[$t=0.25$]{\includegraphics[width=0.4\linewidth]{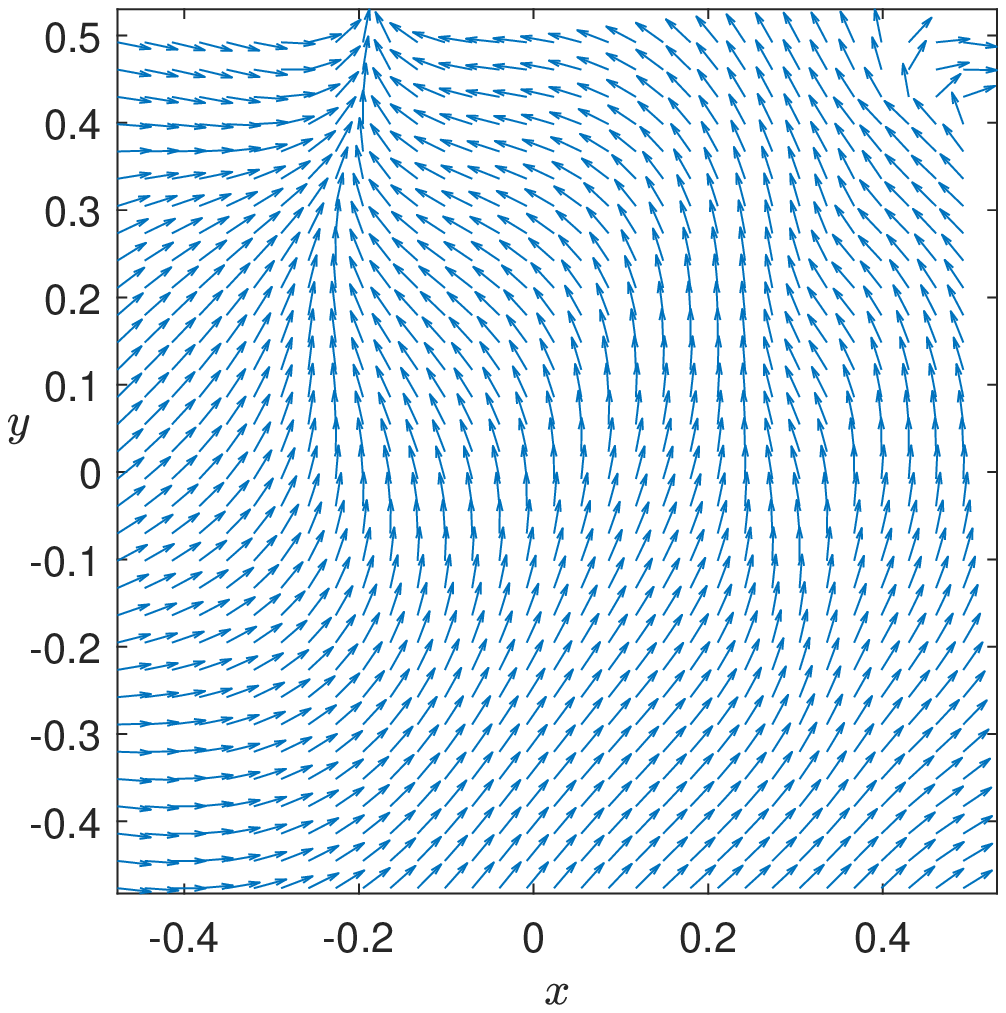}}\hspace{-2.25em}
		 		\subfloat[$t=0.5$]{\includegraphics[width=0.4\linewidth]{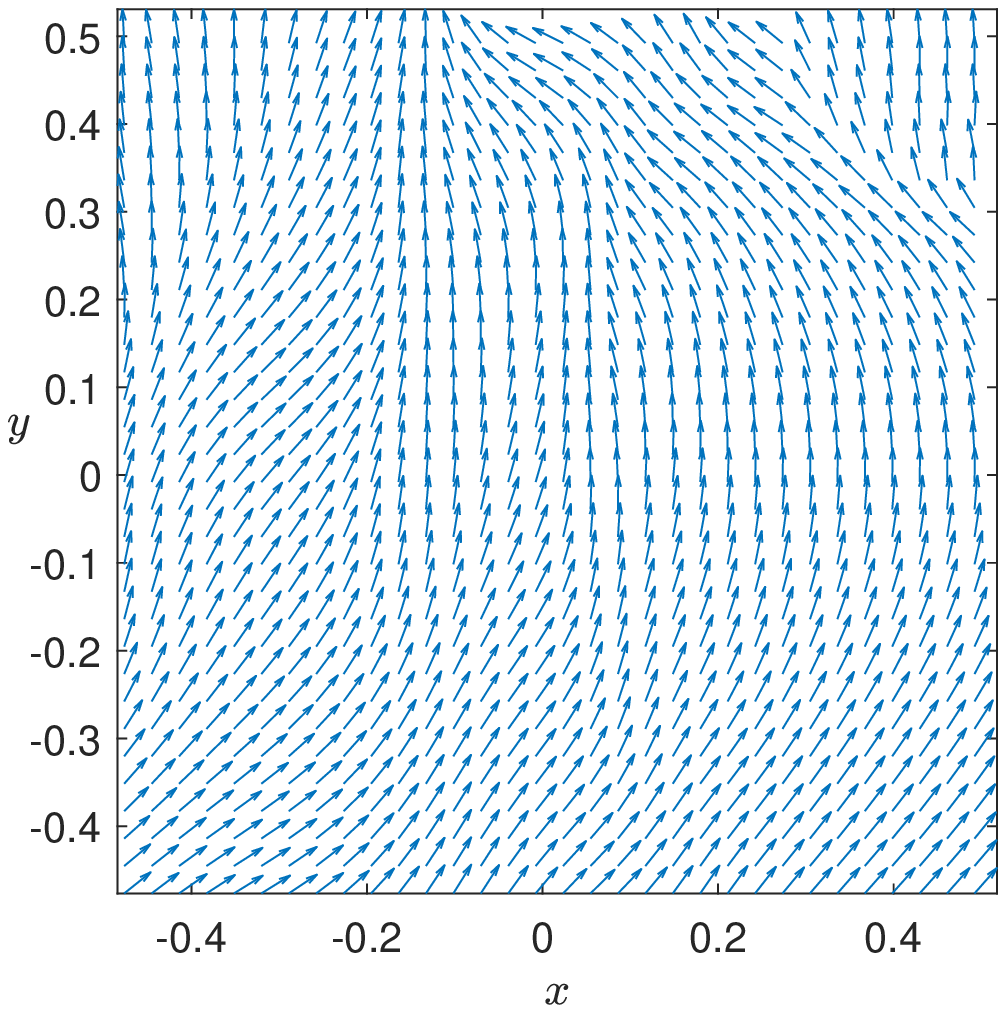}}\hspace{-2.25em}
		\subfloat[$t=2$]{\includegraphics[width=0.4\linewidth]{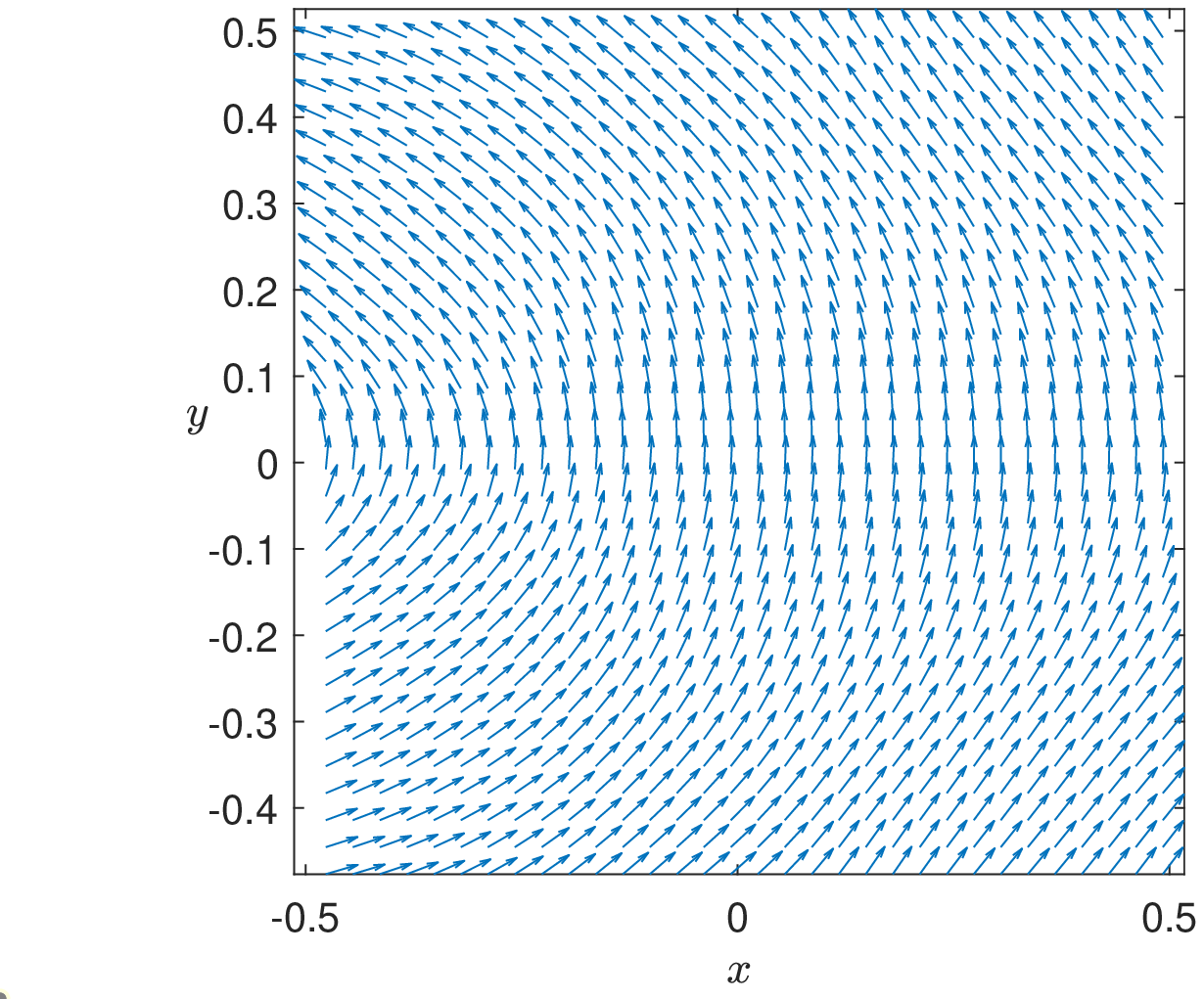}} \\
		 \hspace{-1.5em}\subfloat[$t=0.25$]{\includegraphics[width=0.4\linewidth]{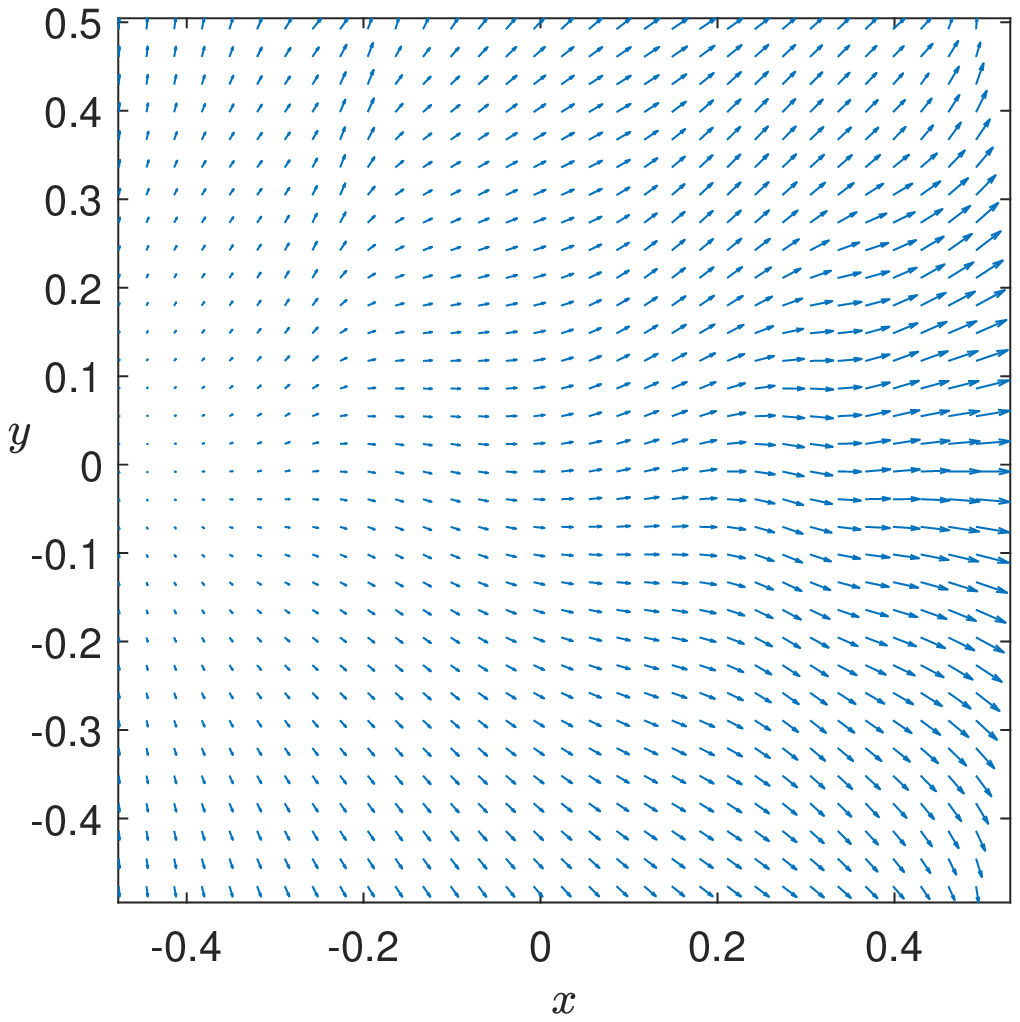}}\hspace{-2.25em}
		\subfloat[$t=0.5$]{\includegraphics[width=0.4\linewidth]{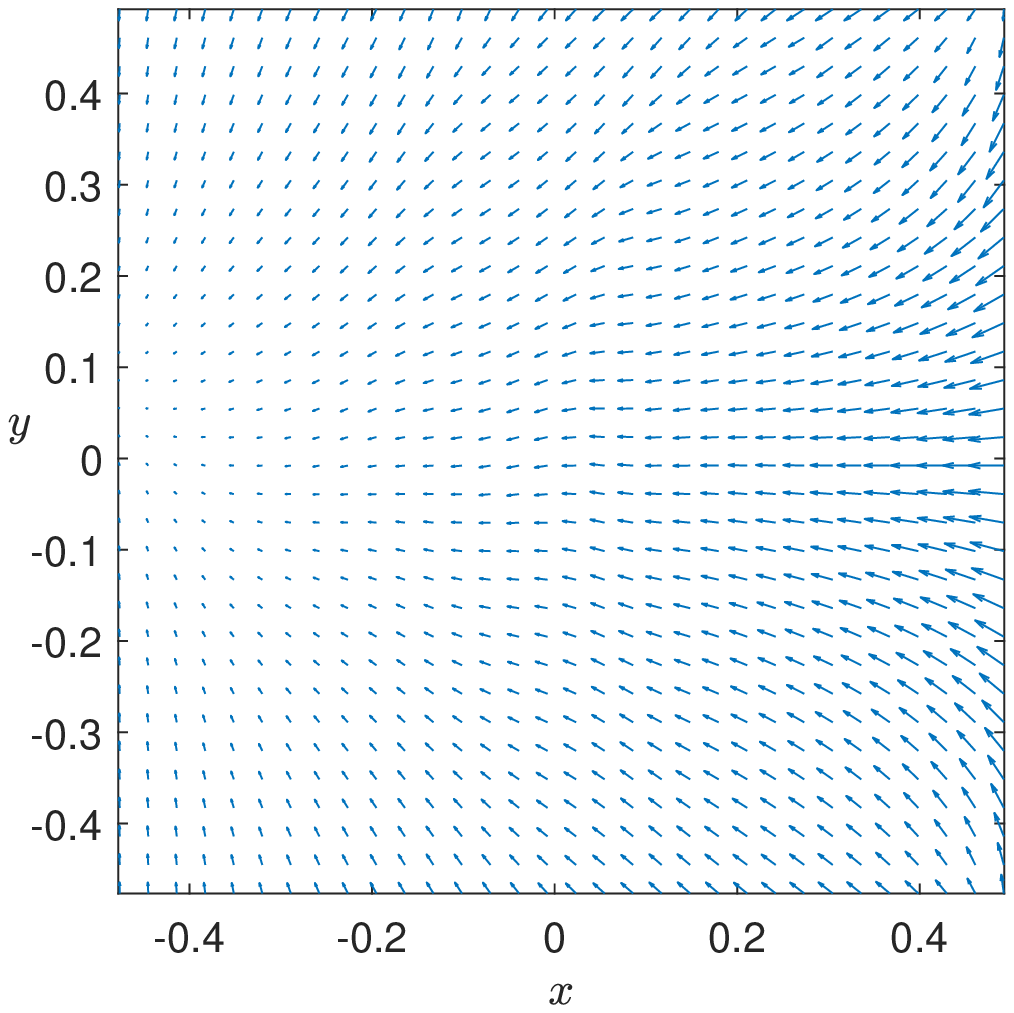}}\hspace{-2.25em}
		\subfloat[$t=2$]{\includegraphics[width=0.4\linewidth]{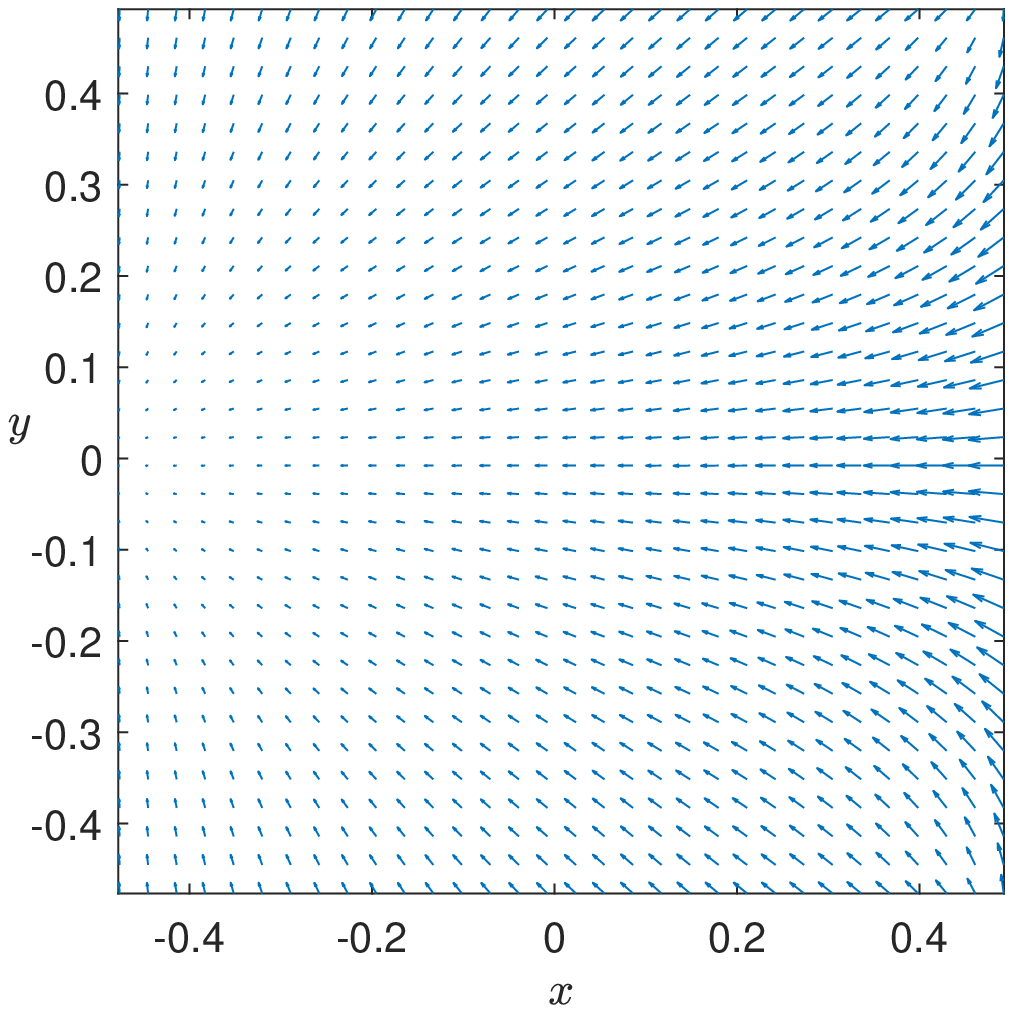}} 
	\end{tabular}
	\caption{Experiment 1 with $\epsi=-5$ and $\epsii=-0.5$. Top row: Director field $\bd_h$, bottom row: Electric field $\Grad\vphi_h$.}
	\label{fig:exp1neg}
\end{figure}

The electric field does not appear to be impacted by the director field very much, while the director field reacts quickly to the electric field and starts aligning in parallel, when the parameters $\eps_i$ are positive, and perpendicular when the parameters are negative, as predicted. 
In Figure~\ref{fig:exp1energy}, we also plotted the evolution of the `reduced' energy
\begin{equation}\label{eq:reducedenergy}
E_m = \frac12\int_{\dom} \left[k|\Grad_h\bd_h^m|^2+\disp|\bw_h^m|^2\right] dx
\end{equation}
and the effect of the damping
\begin{equation}\label{eq:damping}
\text{damping}(t) = \damp \int_0^t\int_{\dom} |\wh{\bw}_h|^2 dx ds,
\end{equation}
over time. We observe that after an initial increase of the energy due to the onset of the electric field, the damping factor leads to a significant decrease of the energy despite the oscillating electric field.
\begin{figure}[ht]
	\begin{tabular}{lr}
		\subfloat[]{\includegraphics[width=0.5\linewidth]{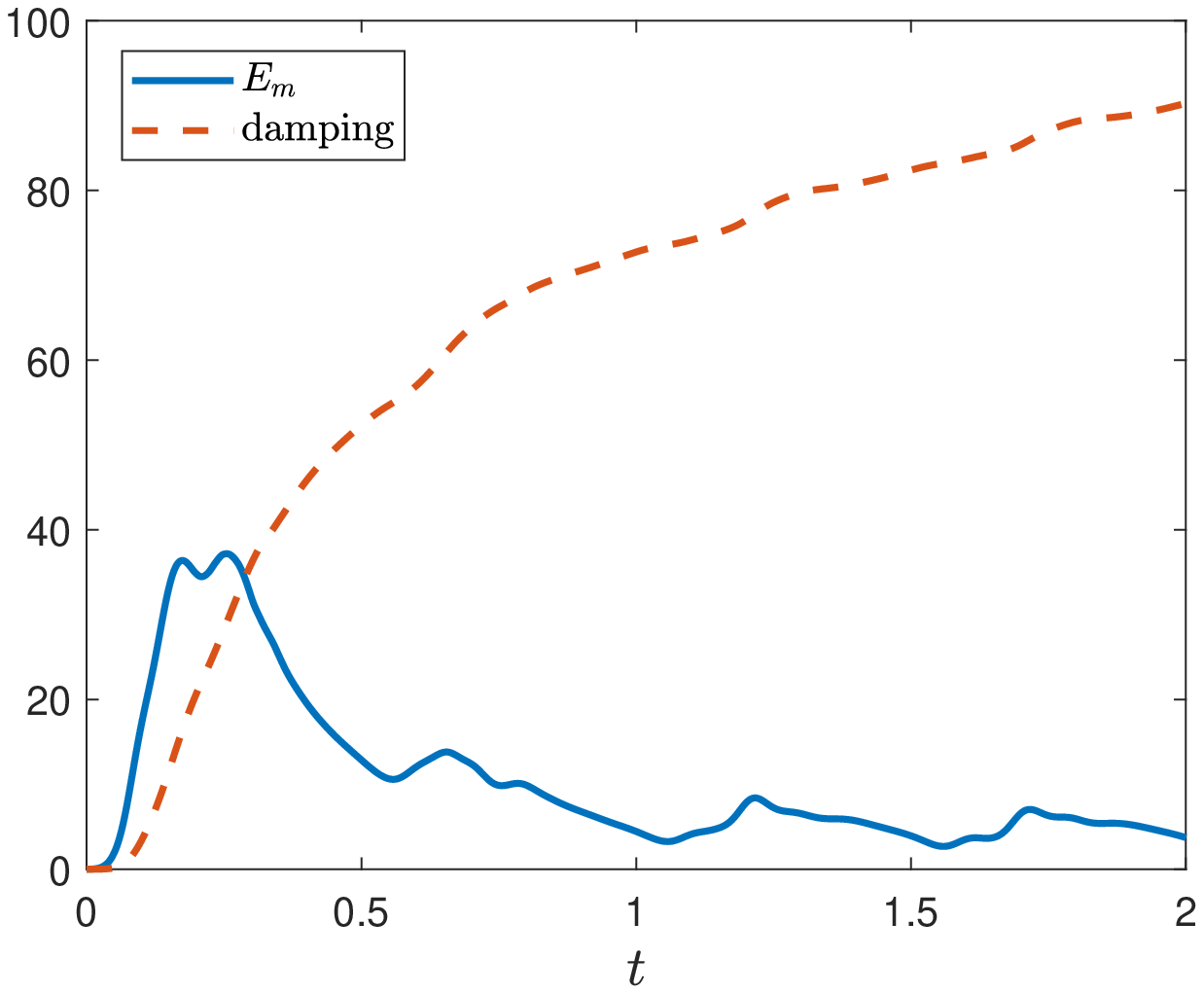}} 
		\subfloat[]{\includegraphics[width=0.5\linewidth]{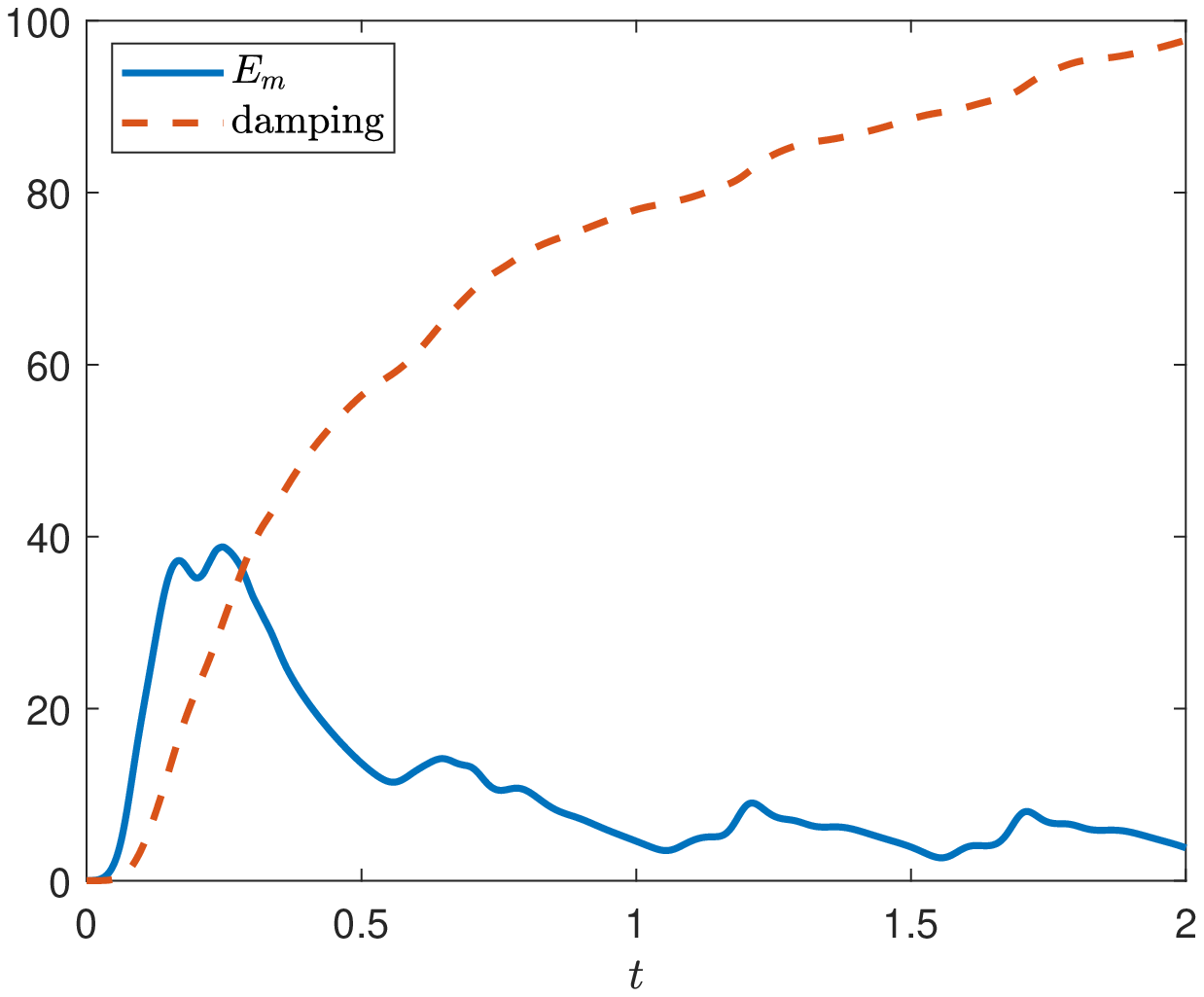}}
	\end{tabular}
	\caption{Experiment 1: Evolution of energy and damping term over time. Left: $\epsi=5$ and $\epsii=0.5$, right: $\epsi=-5$ and $\epsii=-0.5$.}
	\label{fig:exp1energy}
\end{figure}

\subsection{Development of singularities and damping}
For our second experiment, we use the initial data
\begin{equation}\label{eq:exp3init}
\begin{split}
\bd_0(x)=\begin{cases}
(0,0,-1)^\top,\quad &r\geq 1/2,\\
(2 x_1 a,2 x_2 a, a^2-r^2)^\top /(a^2+r^2),\quad &r<1/2,
\end{cases}
\end{split}   
\end{equation}
where $r=\sqrt{x_1^2+x_2^2}$ and $a(r)=(1-2r)^4$. 
\begin{figure}[ht]
	\begin{tabular}{lr}
		\subfloat[$\bd_0$]{\includegraphics[width=0.5\linewidth]{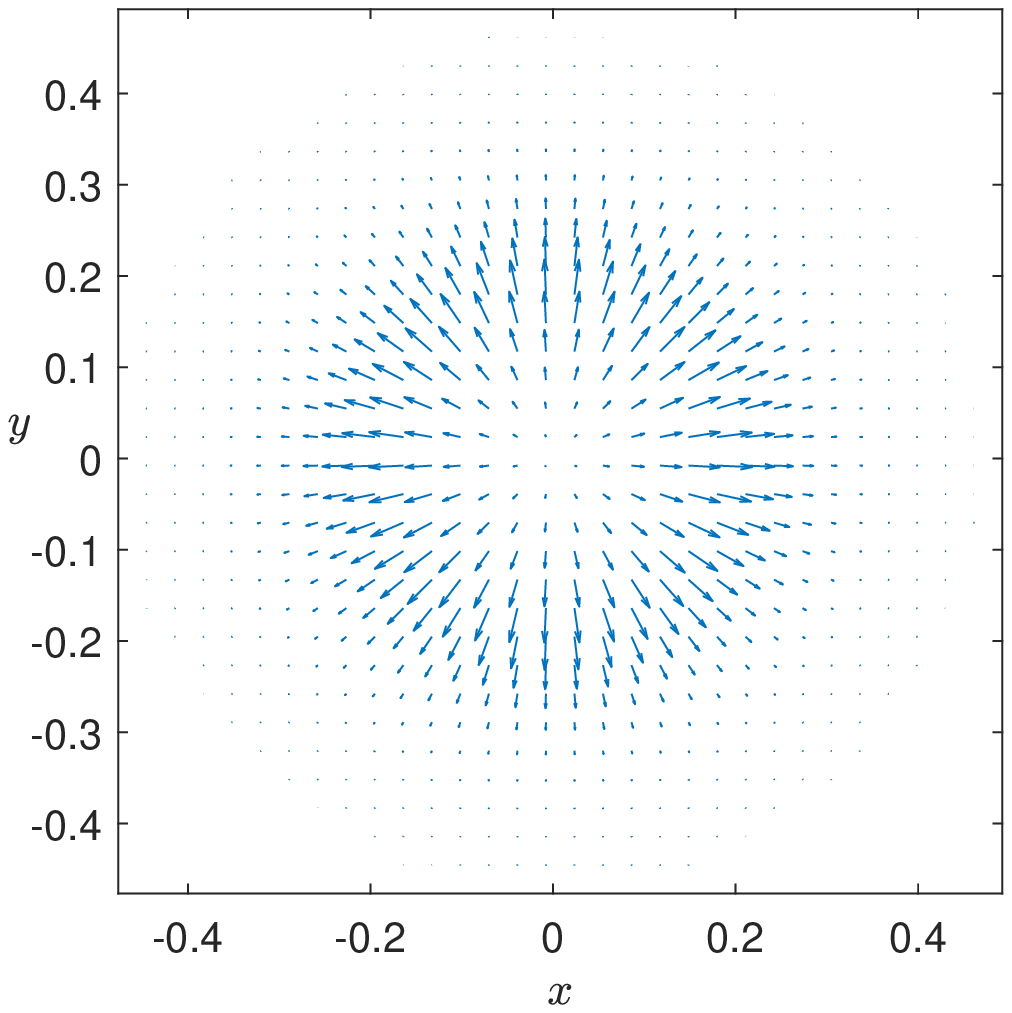}} 
		\subfloat[$\Grad\vphi_h^0$]{\includegraphics[width=0.5\linewidth]{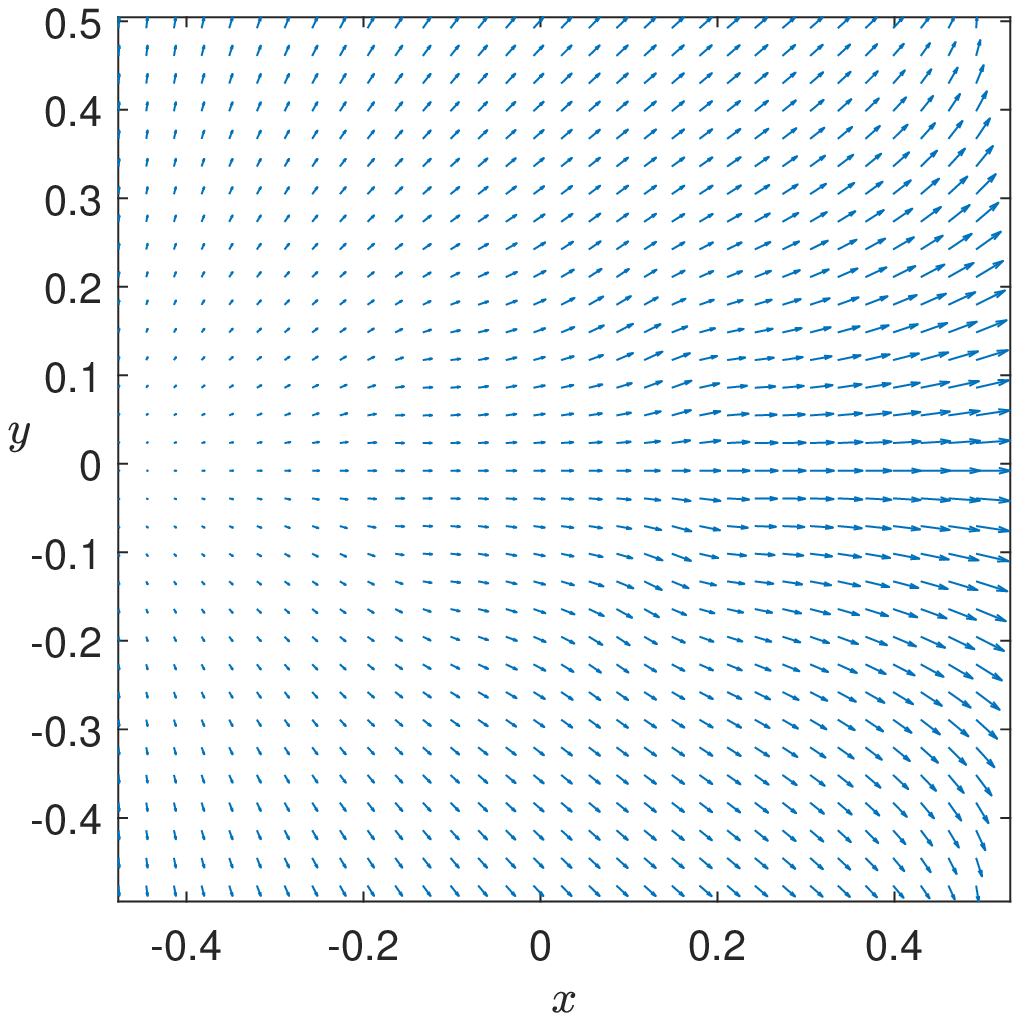}}
	\end{tabular}
	\caption{Experiment 2: Initial data for $\bd$~\eqref{eq:exp3init} and the initial approximation of the electric field $\Grad\vphi_h^0$.}
	\label{fig:exp2initdata}
\end{figure}
This initial data has been used in~\cite{Bartels2007,Karper2014} to demonstrate development of singularities in the wave map equation to the sphere. We use $\epsi=-5$ and $\epsii=-0.5$ and $\damp=0.5, 3$. We compute up to time $T=1$. 
\begin{figure}[ht]
	\centering
	\begin{tabular}{lccr}
		\hspace{-2.em}\subfloat[$t=0.25$]{\includegraphics[width=0.31\linewidth]{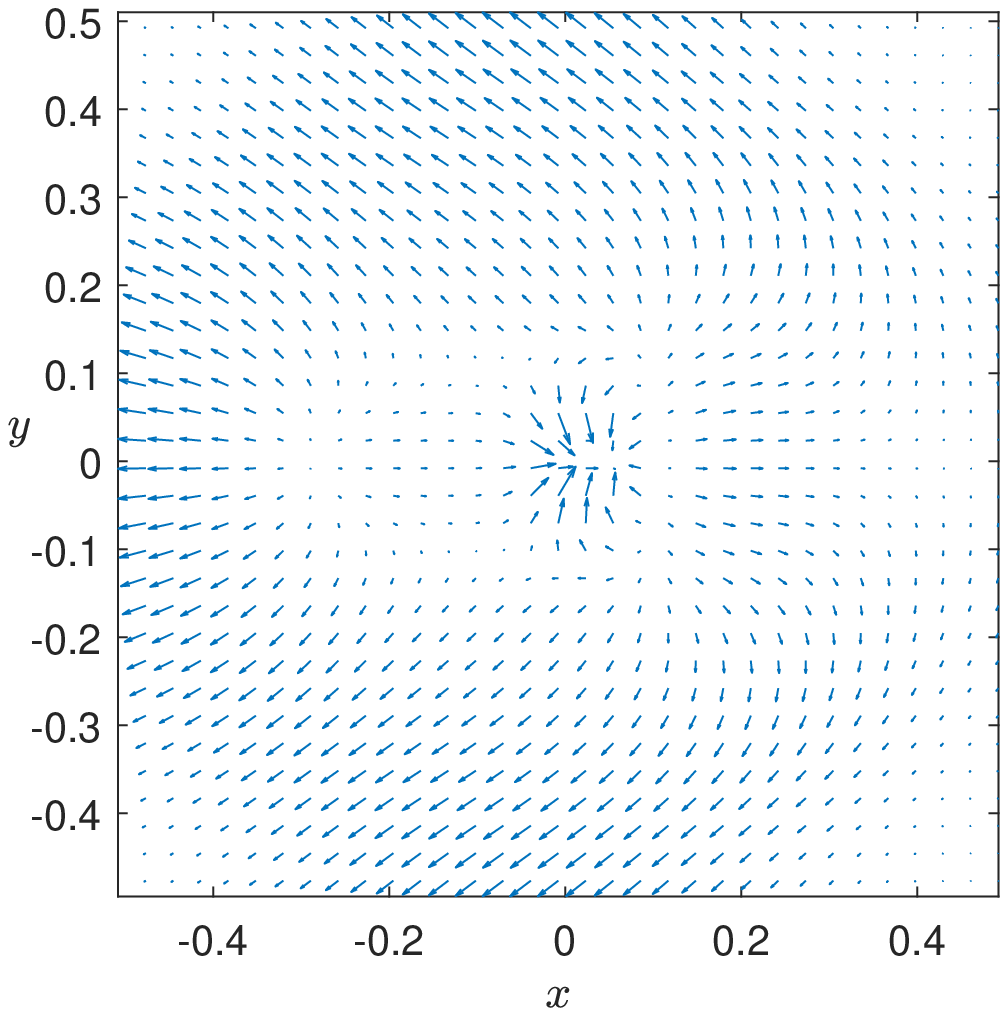}}\hspace{-2.25em}
		\subfloat[$t=0.5$]{\includegraphics[width=0.31\linewidth]{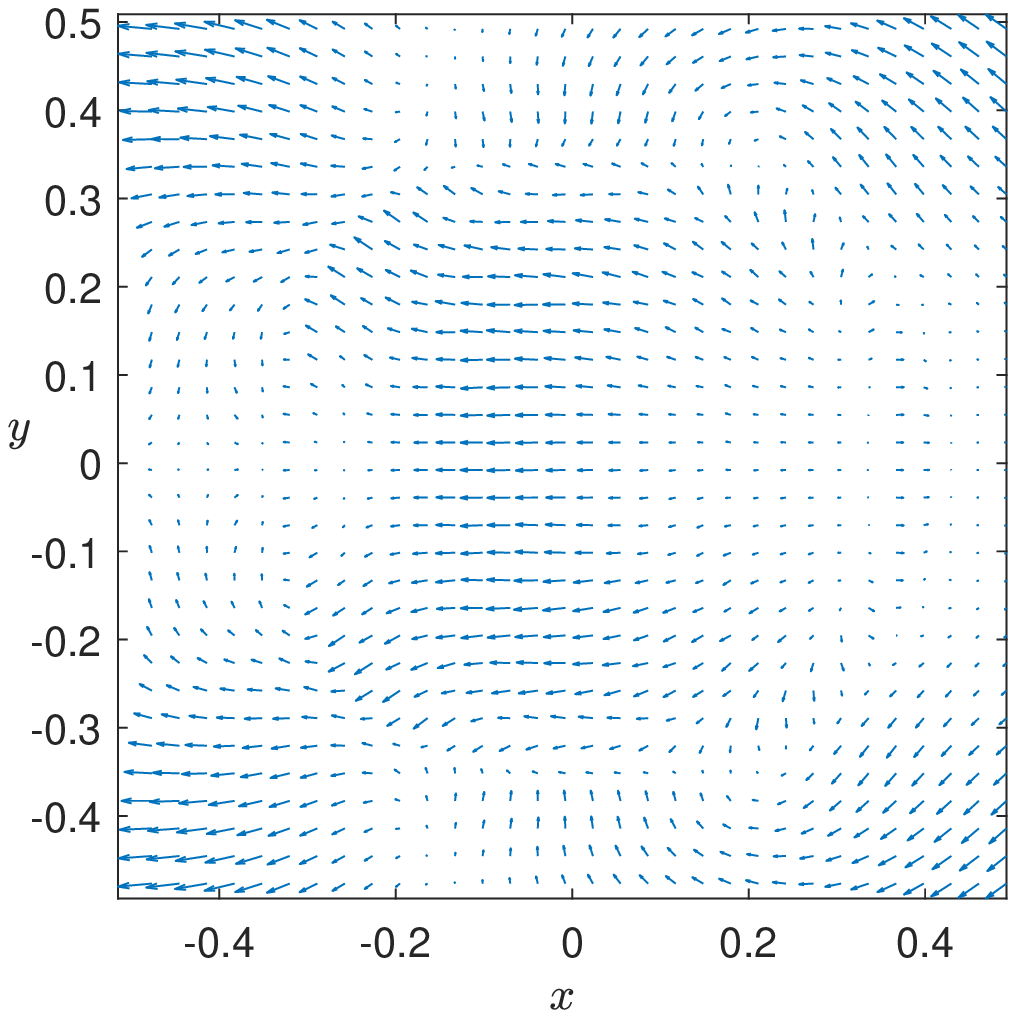}}\hspace{-2.25em}
		\subfloat[$t=0.75$]{\includegraphics[width=0.31\linewidth]{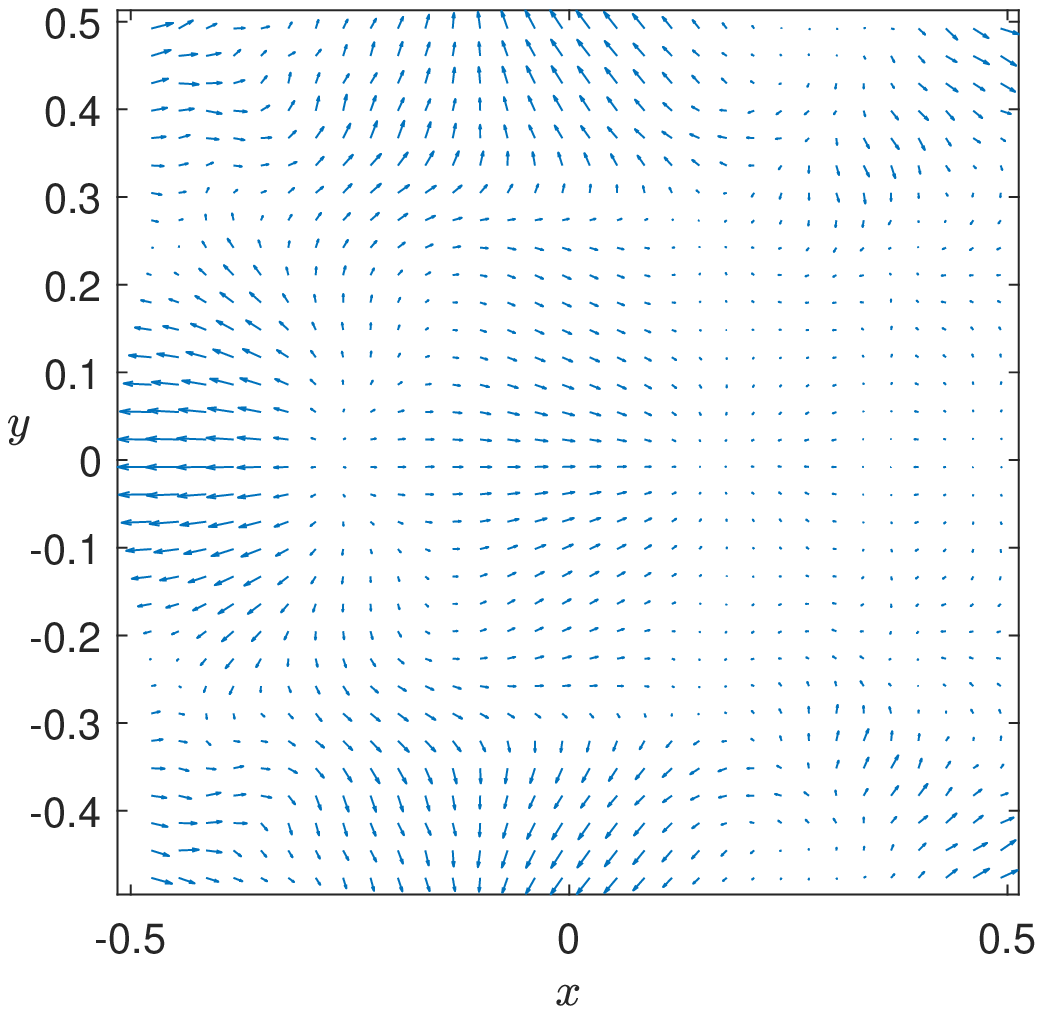}}\hspace{-2.25em}
		\subfloat[$t=1$]{\includegraphics[width=0.31\linewidth]{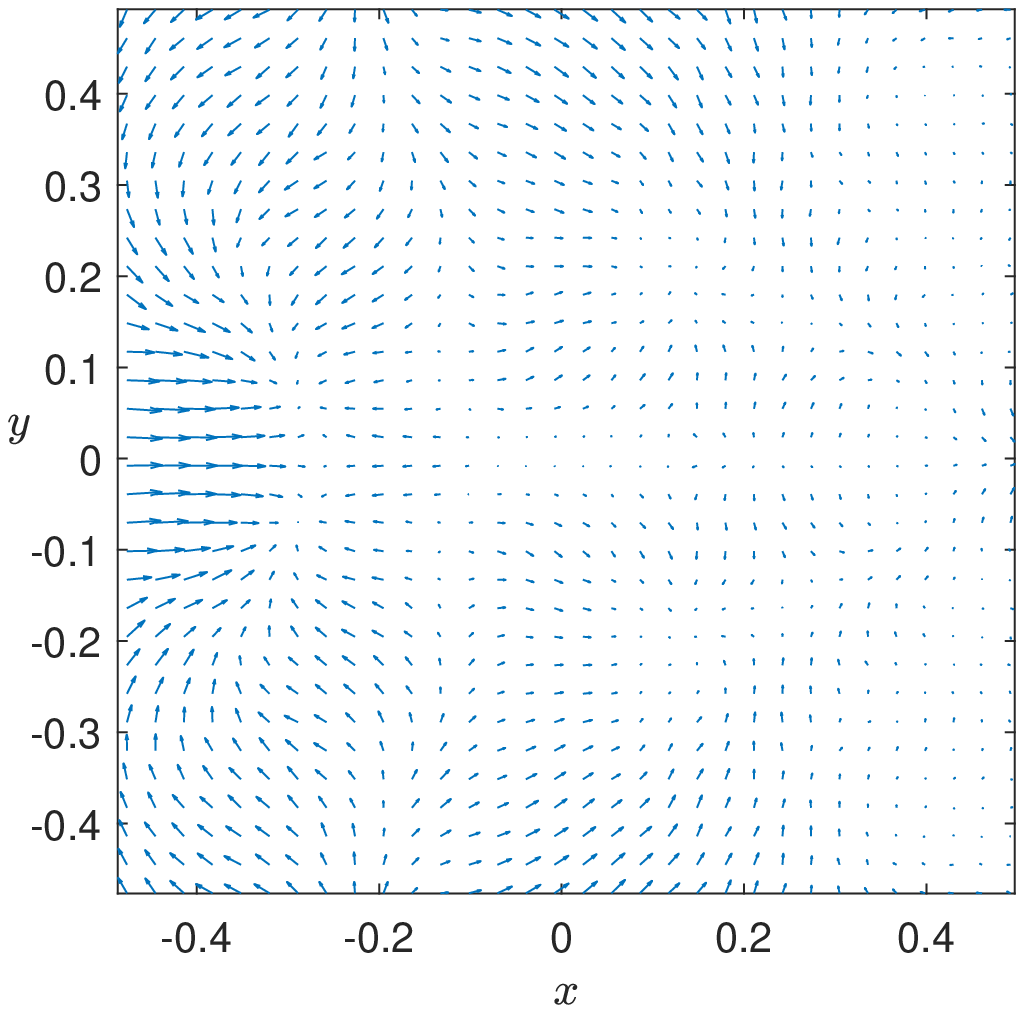}} \\
		\hspace{-2.em}\subfloat[$t=0.25$]{\includegraphics[width=0.31\linewidth]{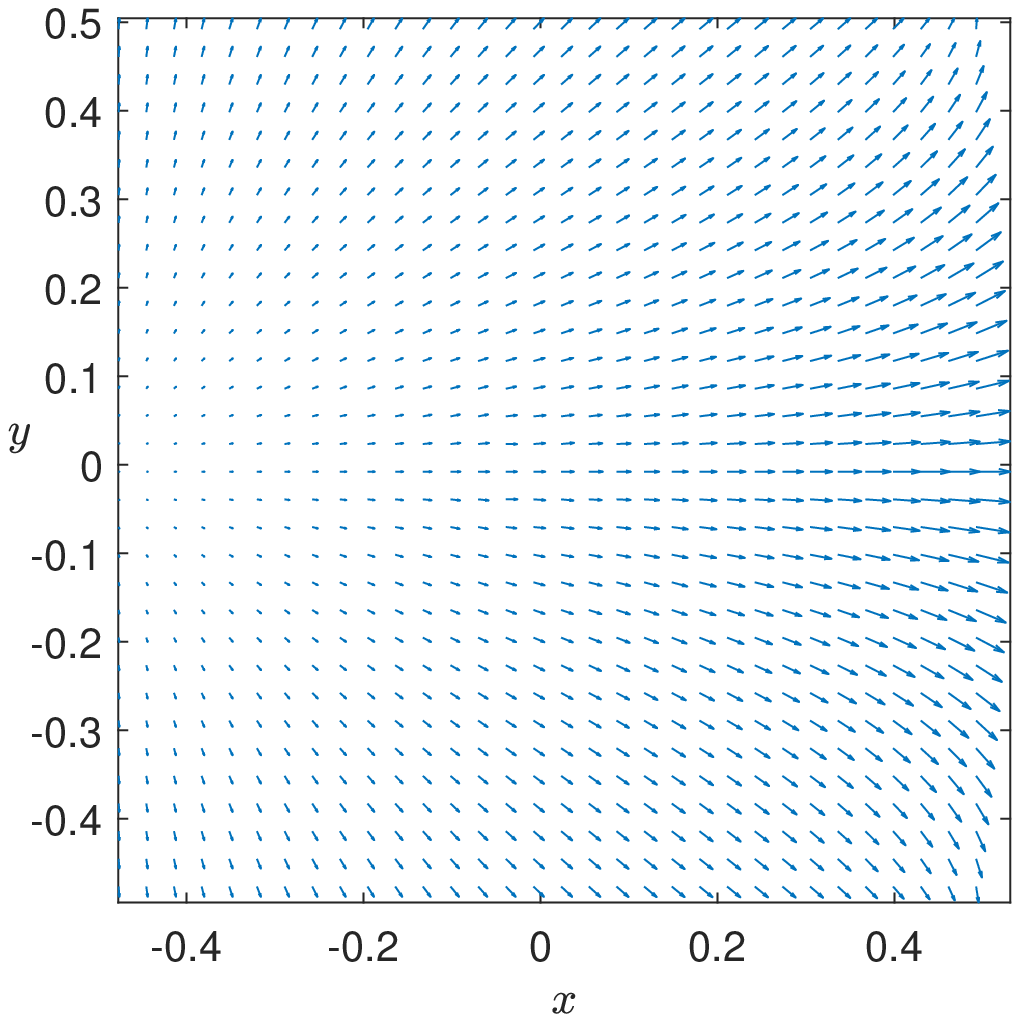}}\hspace{-2.25em}
		\subfloat[$t=0.5$]{\includegraphics[width=0.31\linewidth]{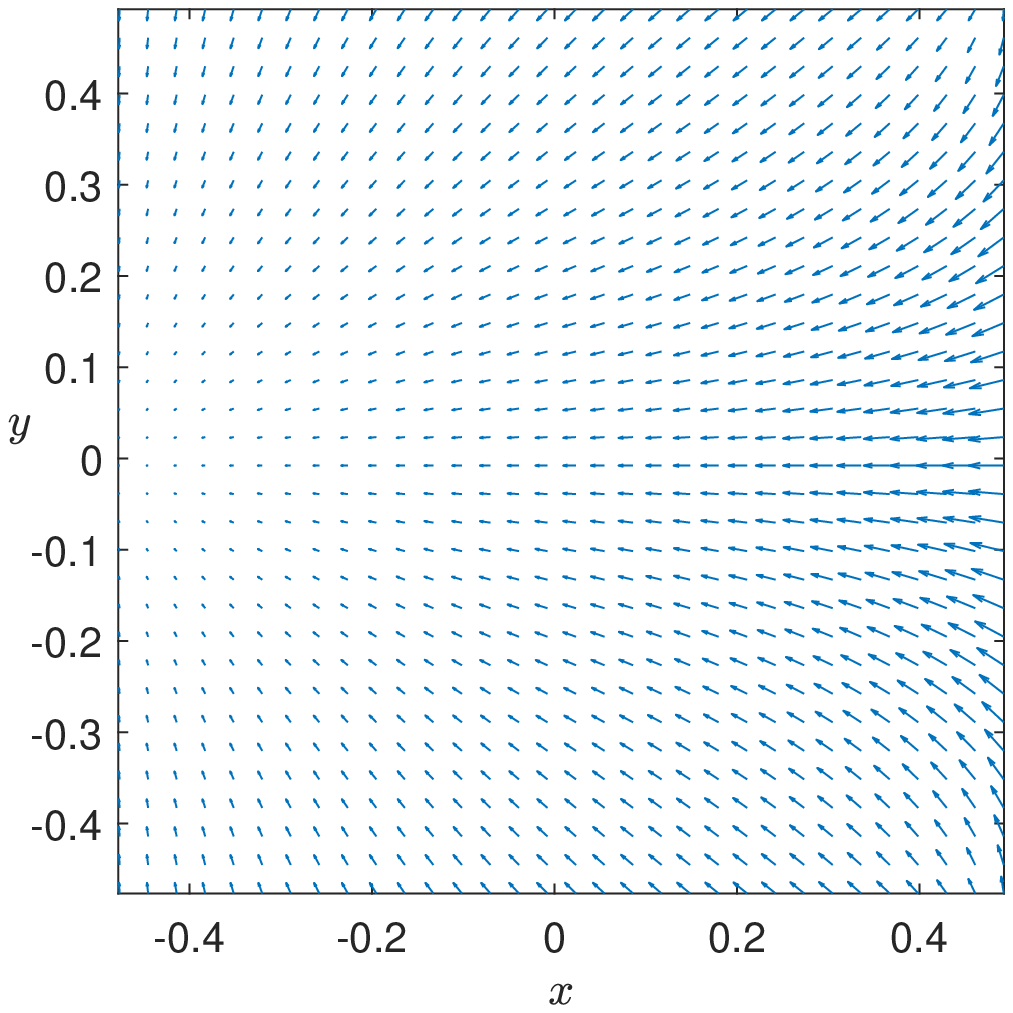}}\hspace{-2.25em}
		\subfloat[$t=0.75$]{\includegraphics[width=0.31\linewidth]{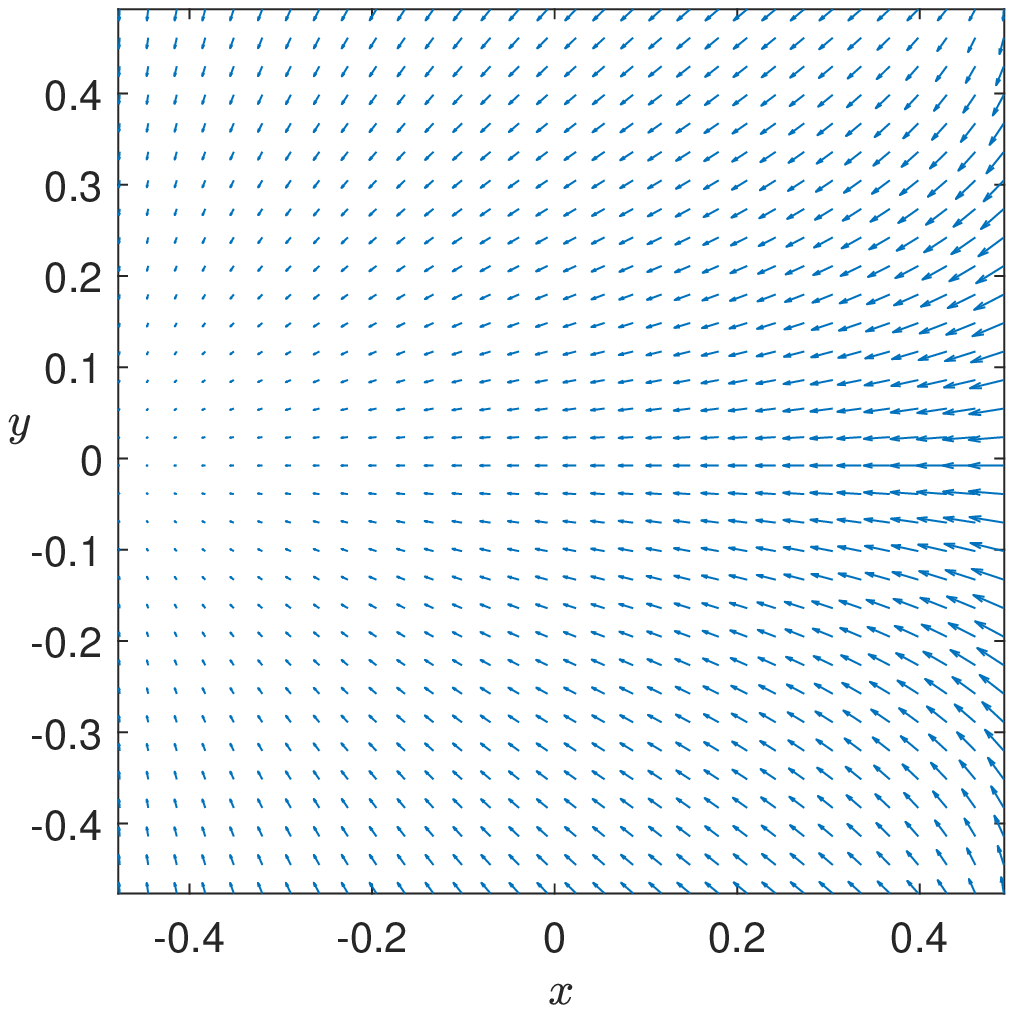}}\hspace{-2.25em}
		\subfloat[$t=1$]{\includegraphics[width=0.31\linewidth]{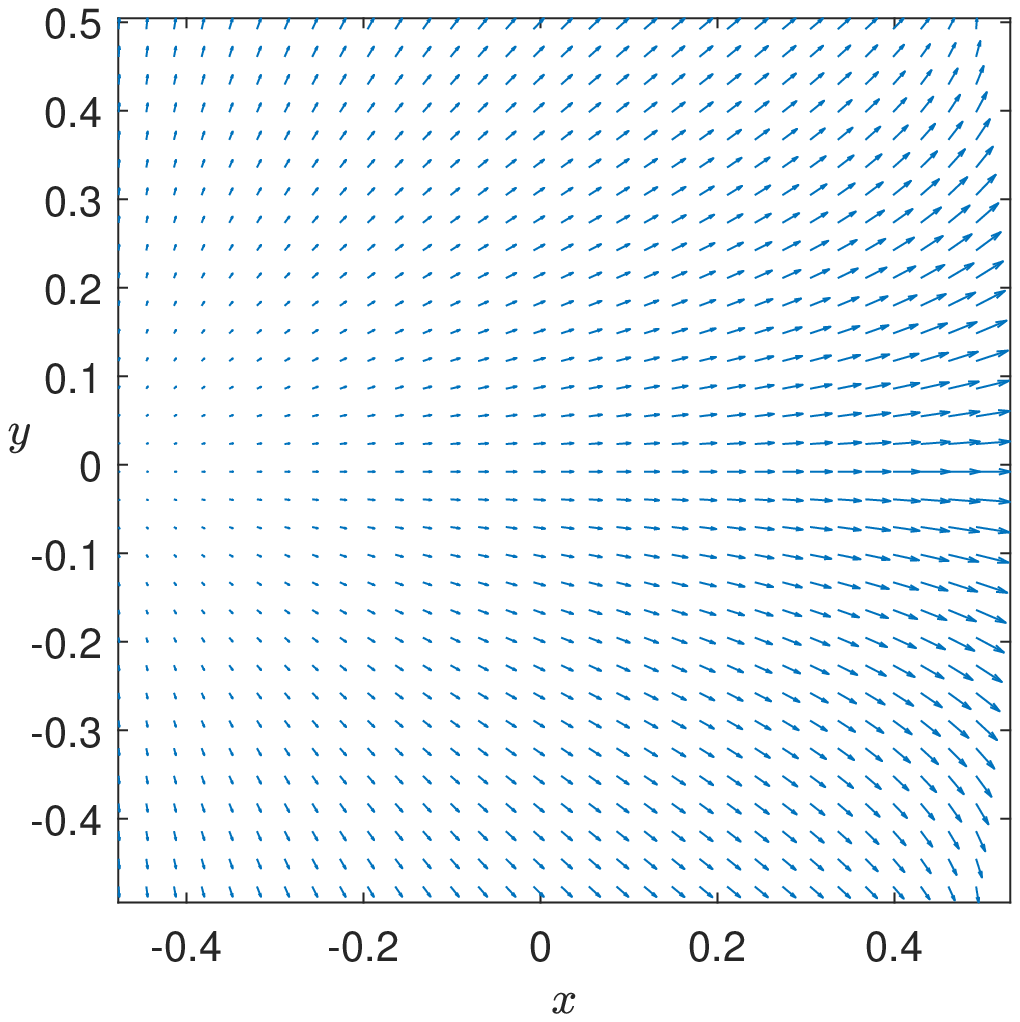}} 
	\end{tabular}
	\caption{Experiment 2 with $\damp = 0.5$. Top row: Director field $\bd_h$, bottom row: Electric field $\Grad\vphi_h$.}
	\label{fig:exp2smalldamp}
\end{figure}

\begin{figure}[ht]
	\centering
	\begin{tabular}{lccr}
		\hspace{-2.em}\subfloat[$t=0.25$]{\includegraphics[width=0.31\linewidth]{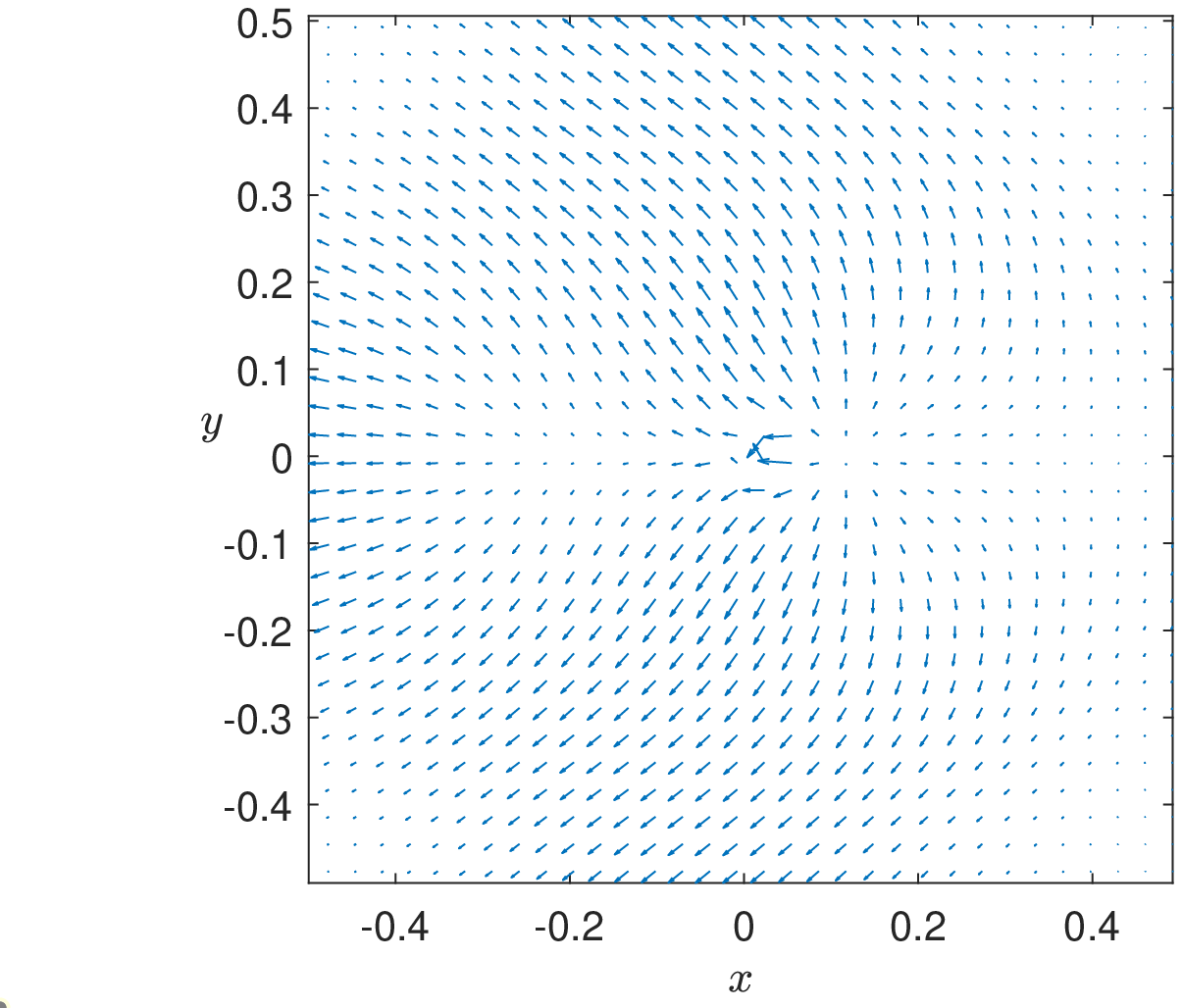}}\hspace{-2.25em}
		\subfloat[$t=0.5$]{\includegraphics[width=0.31\linewidth]{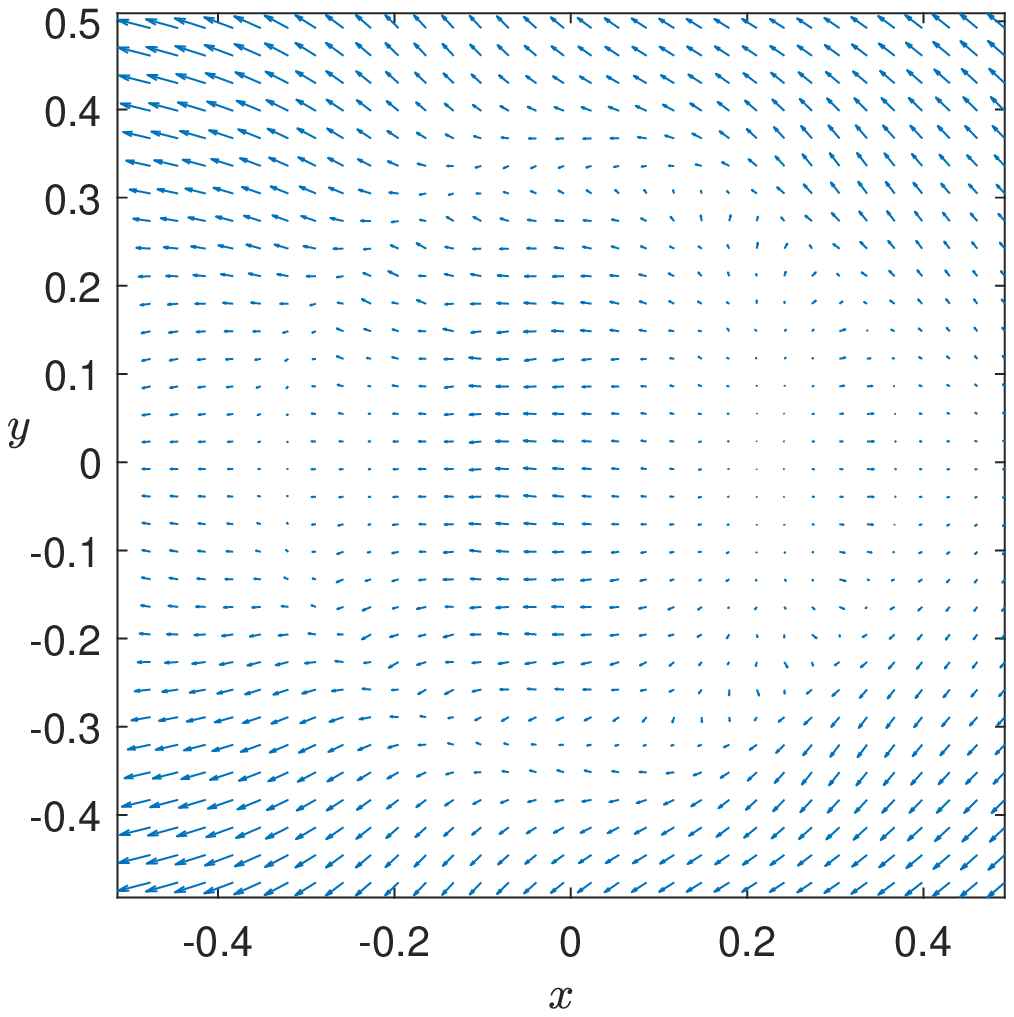}}\hspace{-2.25em}
		\subfloat[$t=0.75$]{\includegraphics[width=0.31\linewidth]{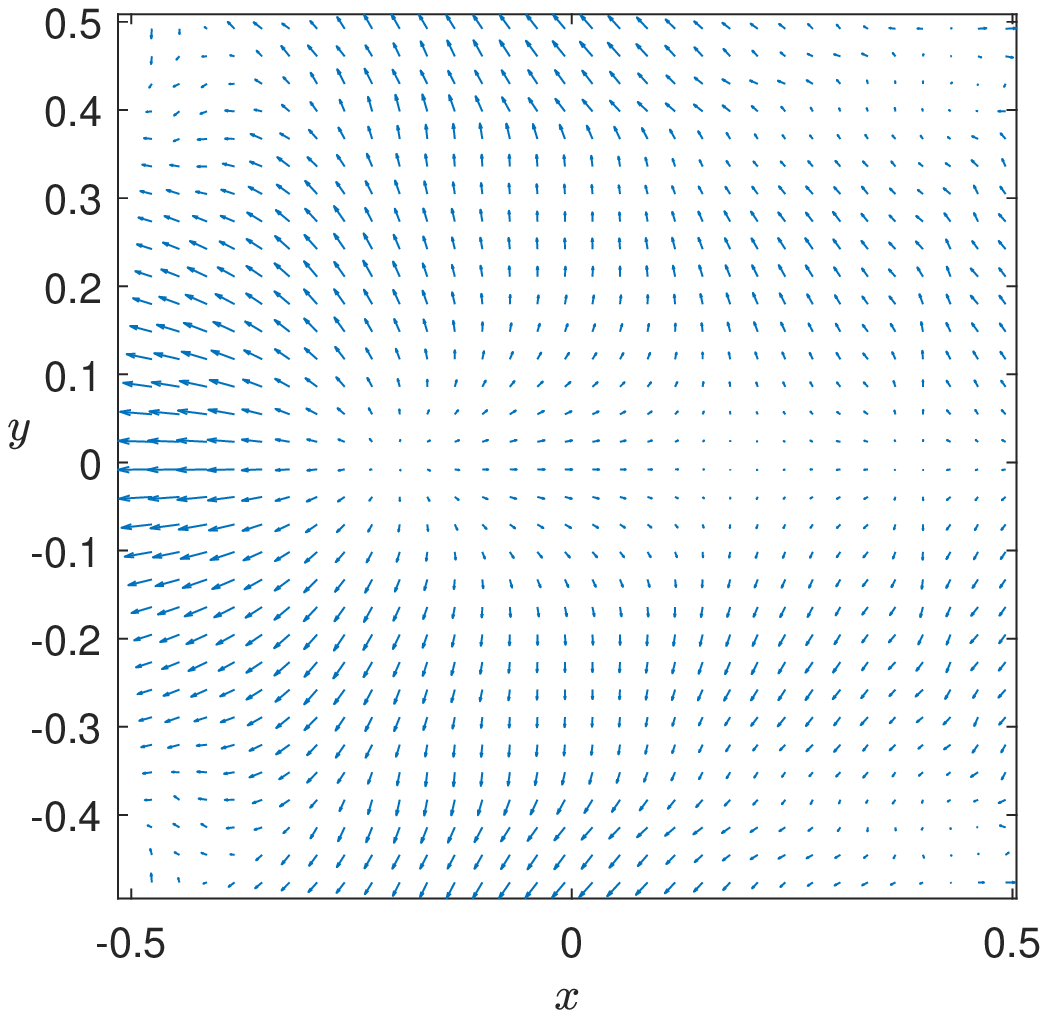}}\hspace{-2.25em}
		\subfloat[$t=1$]{\includegraphics[width=0.31\linewidth]{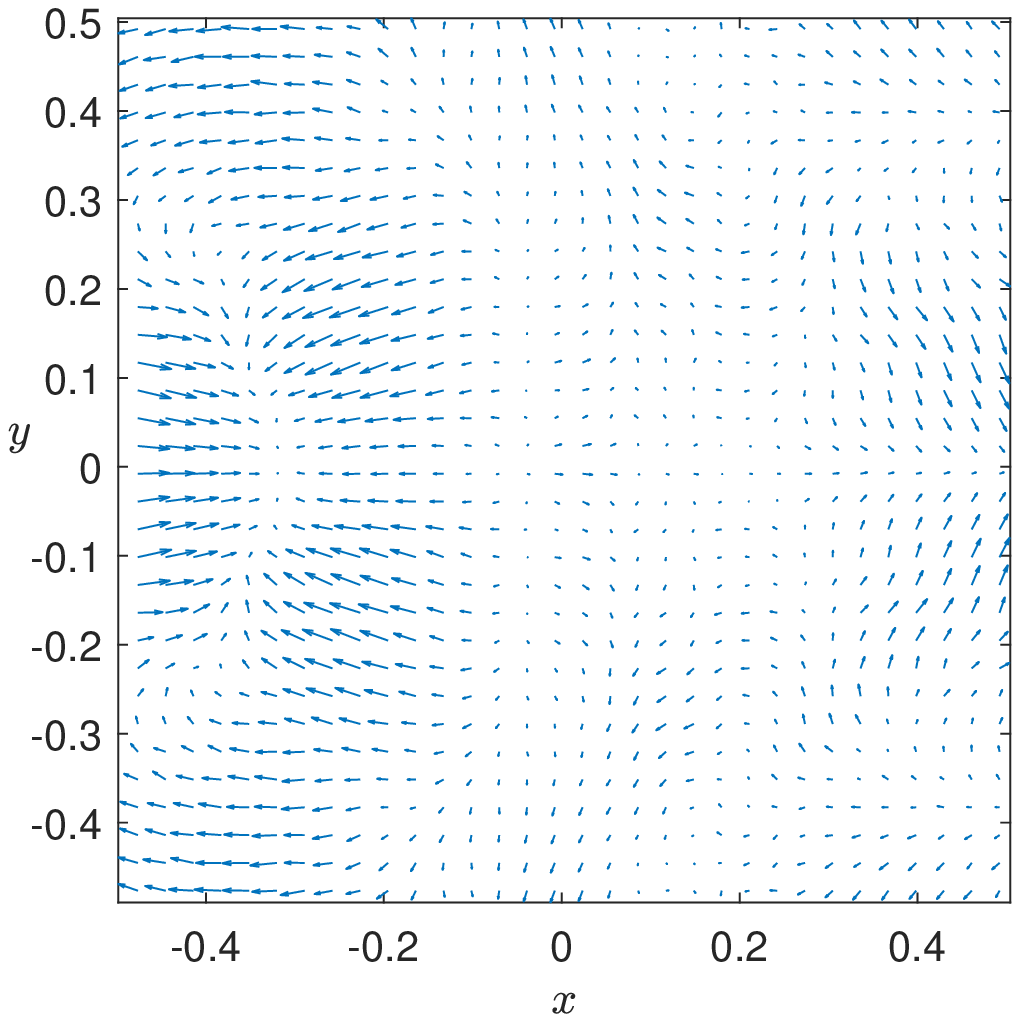}} \\
		\hspace{-2.em}\subfloat[$t=0.25$]{\includegraphics[width=0.31\linewidth]{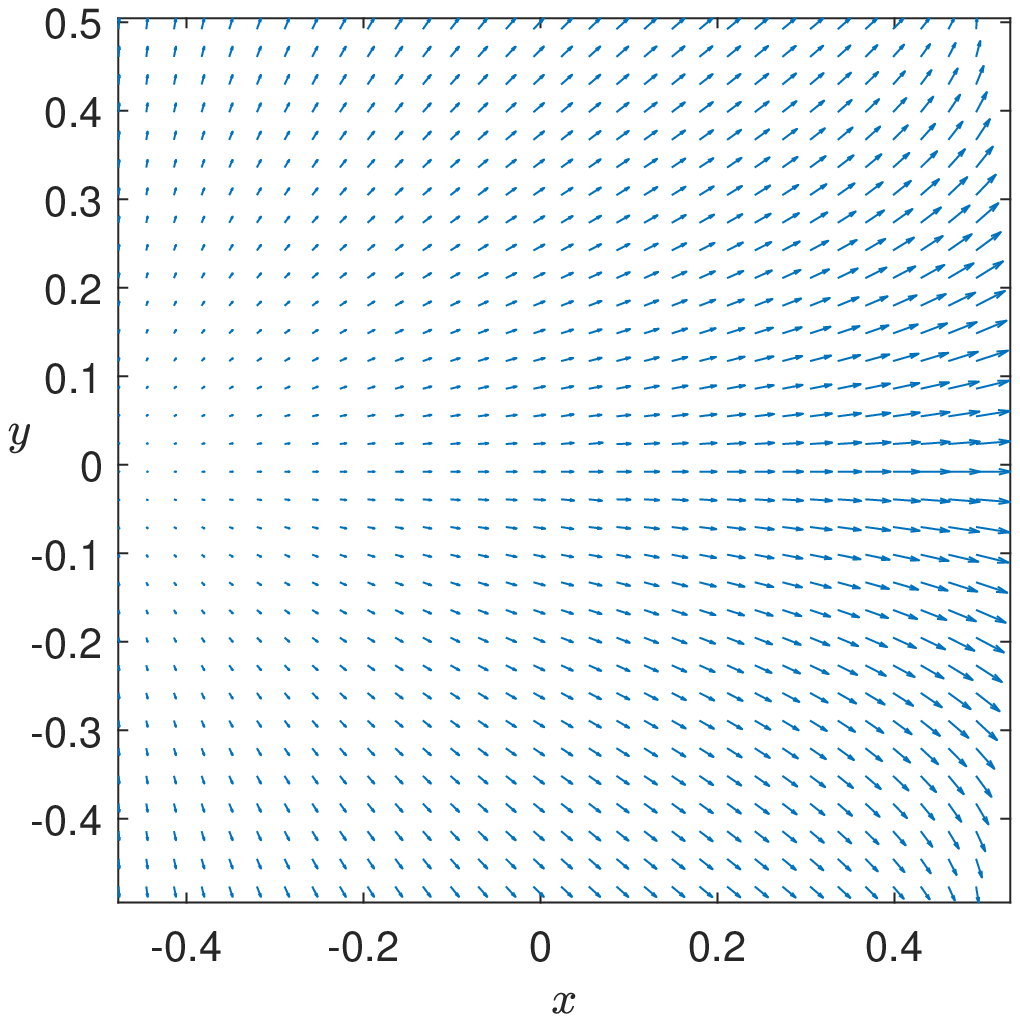}}\hspace{-2.25em}
		\subfloat[$t=0.5$]{\includegraphics[width=0.31\linewidth]{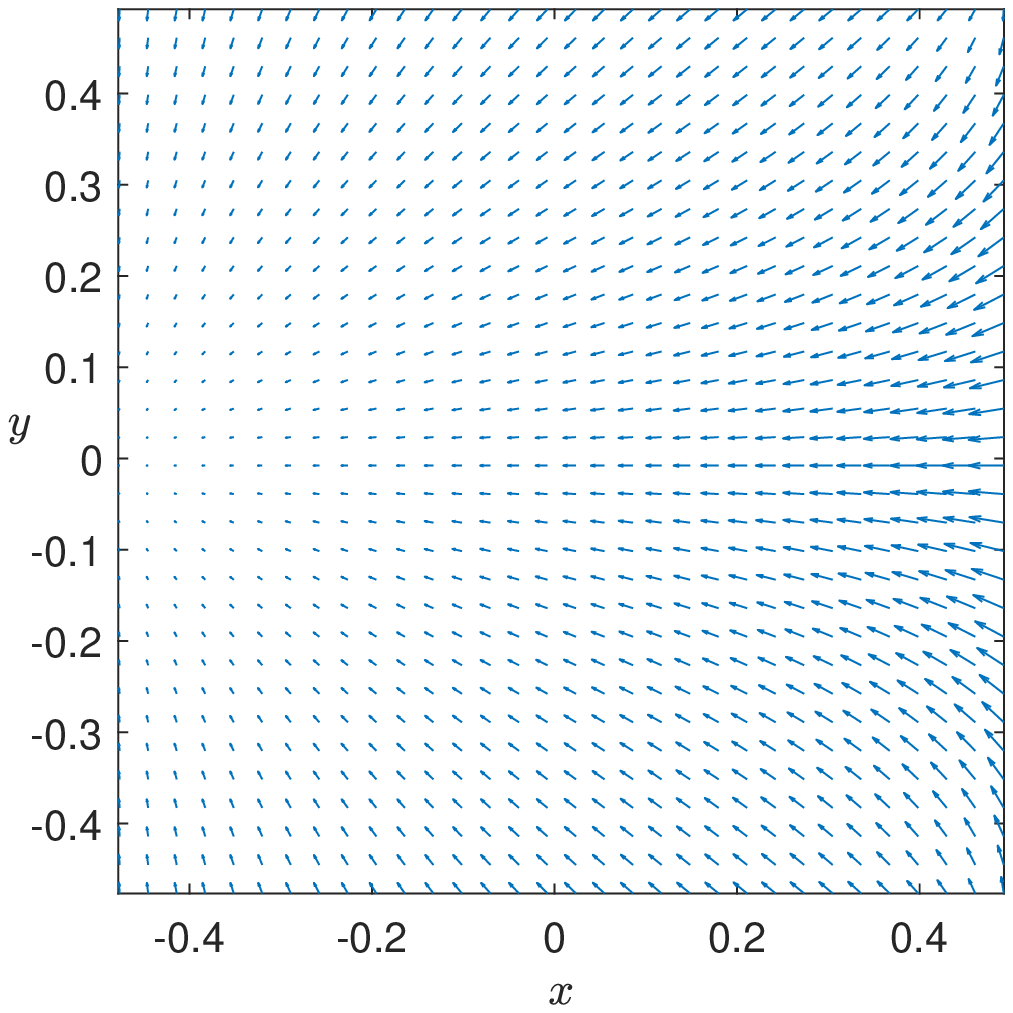}}\hspace{-2.25em}
		\subfloat[$t=0.75$]{\includegraphics[width=0.31\linewidth]{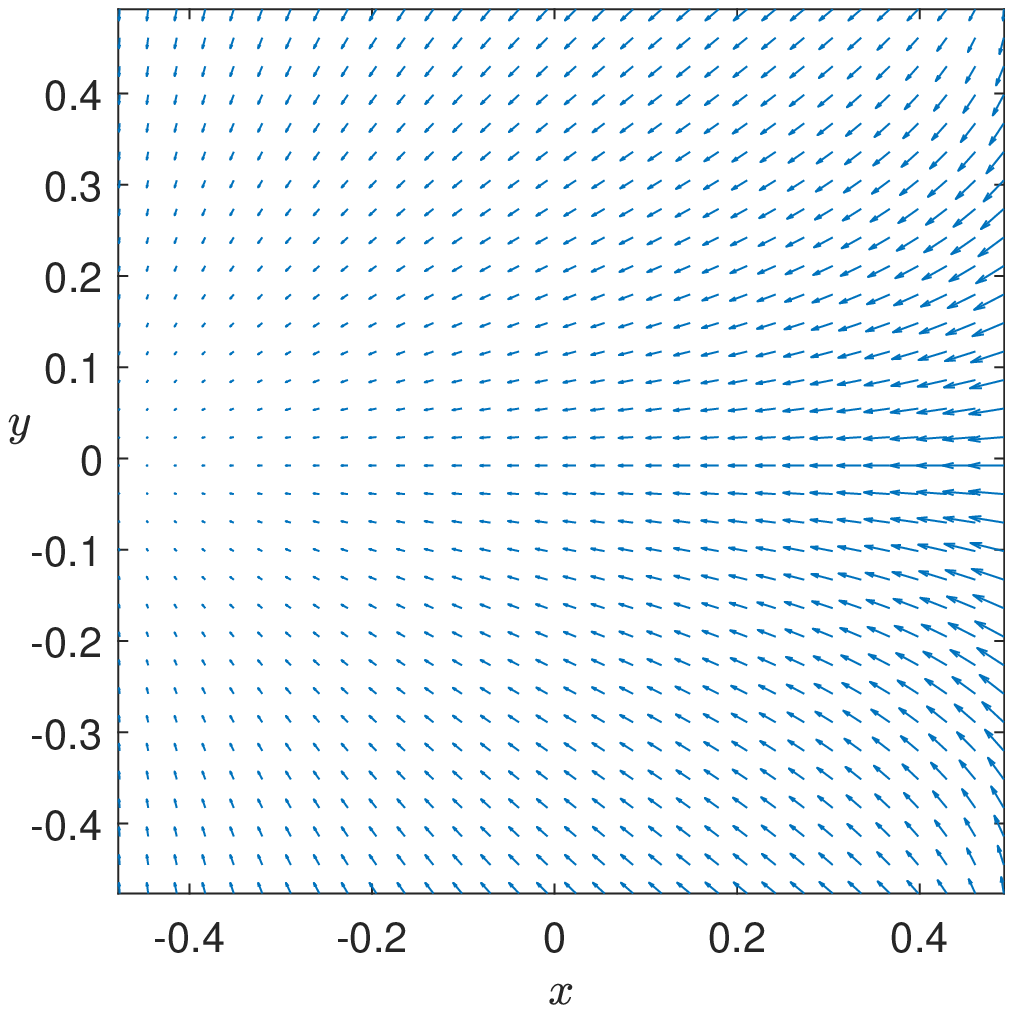}}\hspace{-2.25em}
		\subfloat[$t=1$]{\includegraphics[width=0.31\linewidth]{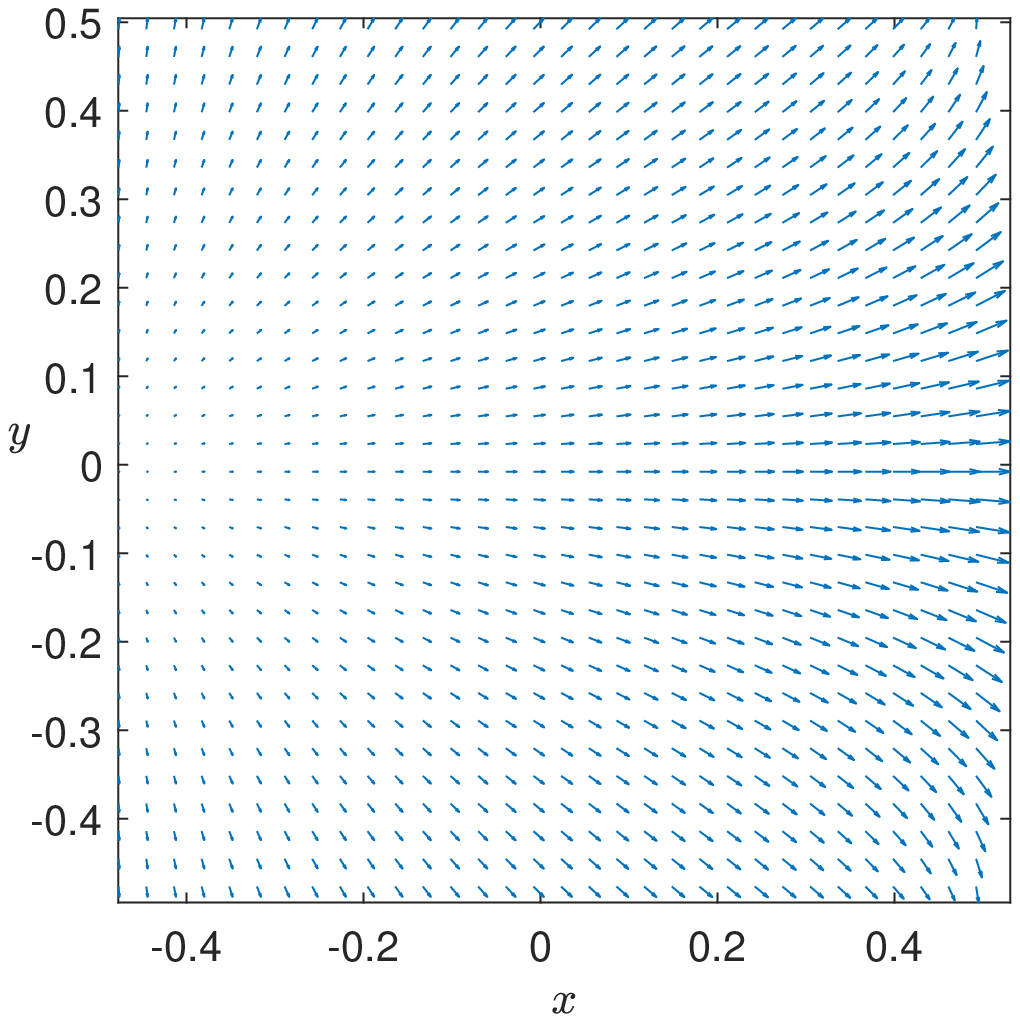}} 
	\end{tabular}
	\caption{Experiment 2 with $\damp = 3$. Top row: Director field $\bd_h$, bottom row: Electric field $\Grad\vphi_h$.}
	\label{fig:exp2largedamp}
\end{figure}

We observe in Figures~\ref{fig:exp2smalldamp} ($\damp=0.5$) and~\ref{fig:exp2largedamp} ($\damp=3$) that a singularity still seems to develop initially but then the effect of the electric field comes into play and forces the director field into a perpendicular alignment to the electric field. In the previous experiment, the director field only moved in the $x-y$-plane, and was so constrained into one direction that is perpendicular to the electric field. However, in this experiment, the director field moves in the whole $\mathbb{S}^2$ and can align perpendicular to the electric field in a 2-dimensional subspace of $\R^3$ which is what appears to happen and also leads to more dynamic behavior before the director field relaxes to an equilibrium state, see Figure~\ref{fig:exp2energy} for a plot of the evolution of the quantities~\eqref{eq:reducedenergy} and~\eqref{eq:damping} for $\damp=0.5$ and $\damp=3$. Again, the electric field does not appear to be perturbed by the director field very much, which could however also be due to our choice of parameters $\epsi$ and $\epsii$.

\begin{figure}[h]
	\begin{tabular}{lr}
		\subfloat[$\damp=0.5$]{\includegraphics[width=0.5\linewidth]{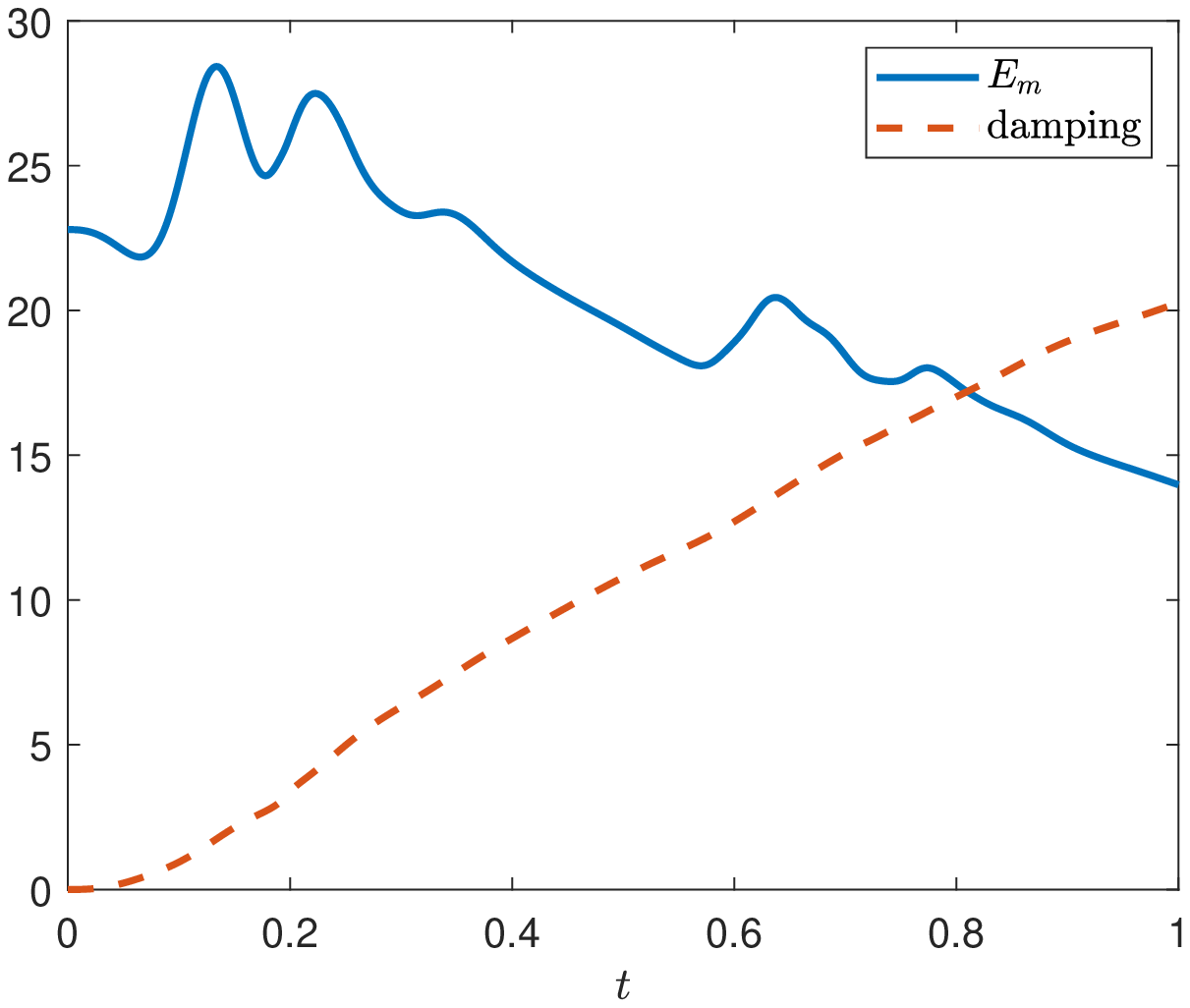}} 
		\subfloat[$\damp=3$]{\includegraphics[width=0.5\linewidth]{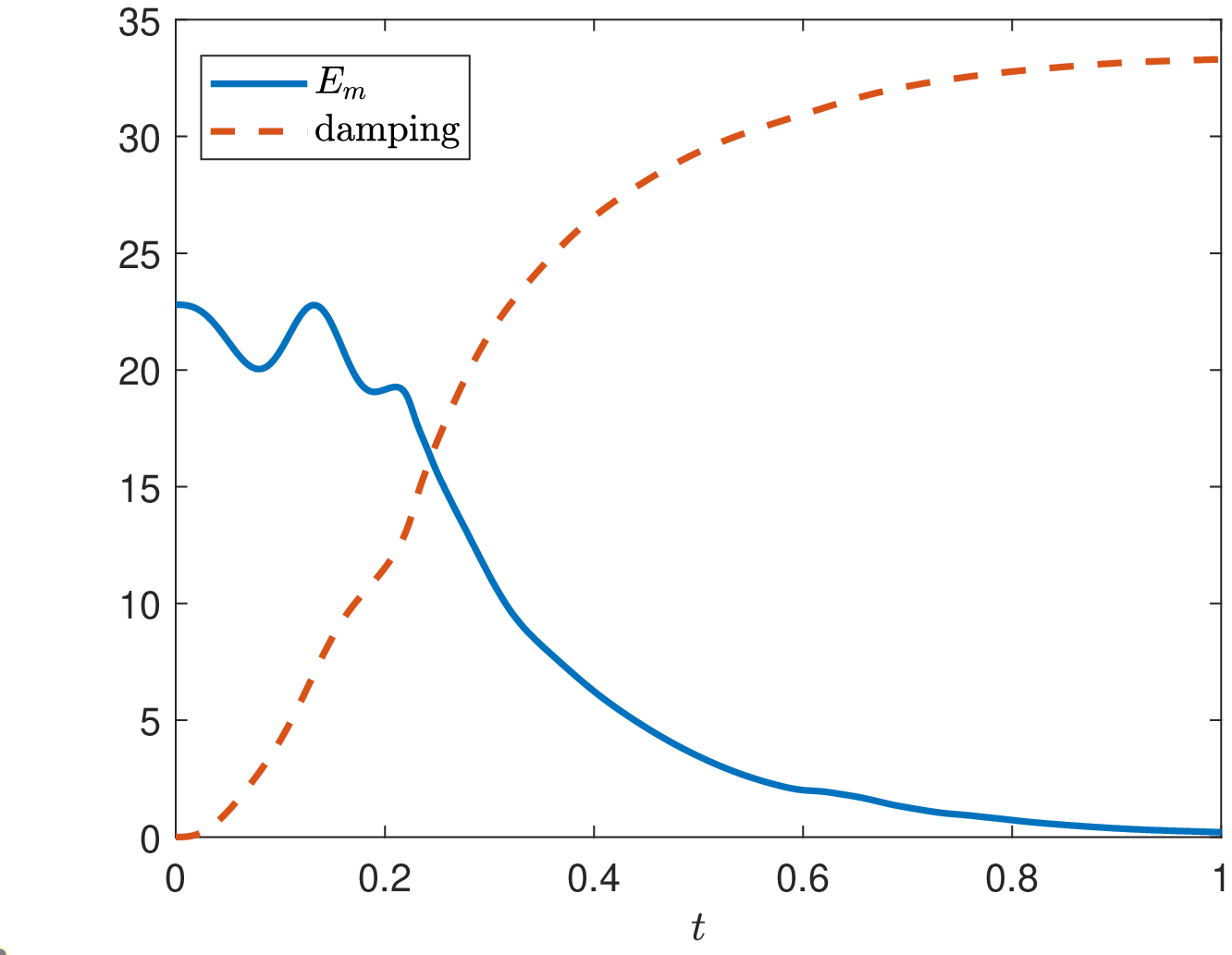}}
	\end{tabular}
	\caption{Experiment 2: Evolution of the energy~\eqref{eq:reducedenergy} and~\eqref{eq:damping} for $\damp=0.5$ and $3$.}
	\label{fig:exp2energy}
\end{figure}

\section{Acknowledgment}
I would like to thank Juan Pablo Borthagaray and Andreas Prohl for encouraging to finally write up this paper.
\appendix
\section{Real Analysis Folklore}
In this section we prove a lemma which can be found in the lecture notes of Kenneth Karlsen~\cite{kenneth} but is not proved there. The result is standard, but we provide its proof here for convenience.
\begin{lemma}\label{lem:kennethlemma}
	Let $u_m, v_m:\dom\to\R$ be sequences of measurable functions, such that
	\begin{align*}
	&u_m\rightarrow u,\quad \text{a.e. in } \dom, \quad \norm{u_m}_{L^\infty(\dom)}\leq C\, \quad\forall \, m\\
	&v_m\weak v,\quad \text{in } L^1(\dom), 
	\end{align*} 
	for some functions $u\in L^\infty(\dom)$, $v\in L^1(\dom)$. Then
	\begin{equation*}
	u_m v_m\rightharpoonup u\, v,\quad \text{in } L^1(\dom),\quad \text{as } m\rightarrow\infty.
	\end{equation*}
\end{lemma}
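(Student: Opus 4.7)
The goal is to show that, for every test function $\phi \in L^\infty(\dom)$,
$$\int_\dom u_m v_m \phi \, dx \longrightarrow \int_\dom u v \phi \, dx \quad \text{as } m \to \infty.$$
I would proceed by means of the standard decomposition
$$\int_\dom (u_m v_m - u v)\phi \, dx = \int_\dom u \phi (v_m - v) \, dx + \int_\dom (u_m - u)\phi \, v_m \, dx,$$
and treat the two summands in turn. The first summand is essentially immediate: since both $u$ and $\phi$ lie in $L^\infty(\dom)$, the product $u\phi$ lies in $L^\infty(\dom)$, so the hypothesis $v_m \weak v$ in $L^1(\dom)$ (which by definition is tested against arbitrary $L^\infty$ functions) forces this integral to zero.

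The second summand is the heart of the argument. I would set $w_m := (u_m - u)\phi\,v_m$ and aim for the stronger statement $w_m \to 0$ in $L^1(\dom)$. Three ingredients are at hand: (a) $w_m \to 0$ almost everywhere in $\dom$, inherited directly from $u_m \to u$ a.e.; (b) the pointwise domination $|w_m| \leq 2C\|\phi\|_{L^\infty}|v_m|$ coming from the uniform bound on $\{u_m\}$; and (c) the Dunford--Pettis theorem, which translates the weak $L^1$-convergence of $\{v_m\}$ into equiintegrability (uniform integrability) of that sequence. Combining (b) and (c) transfers equiintegrability to $\{w_m\}$, after which Vitali's convergence theorem, fed with (a) and the equiintegrability of $\{w_m\}$, yields $w_m \to 0$ in $L^1(\dom)$ and in particular $\int_\dom w_m \, dx \to 0$.

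The main point that deserves some care is the step invoking Dunford--Pettis, whose cleanest formulation asks for $|\dom| < \infty$; this is harmless in the present paper, where the lemma is only ever applied on a bounded domain. If one prefers a more hands-on route that avoids naming Vitali, one can instead argue directly: given $\varepsilon > 0$, equiintegrability of $\{v_m\}$ produces $\delta > 0$ such that $\int_E |v_m|\,dx < \varepsilon$ uniformly in $m$ whenever $|E| < \delta$; Egorov's theorem then supplies such a set $E$ on whose complement $u_m \to u$ uniformly; splitting $\int_\dom |w_m|\,dx$ across $E$ and $\dom \setminus E$ and using the uniform $L^1$-bound on $\{v_m\}$ (another consequence of weak $L^1$-convergence) to absorb the tail on $\dom \setminus E$ closes the estimate. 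Either route reaches the conclusion, and no genuinely difficult step is involved.
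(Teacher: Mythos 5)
Your primary route has a genuine gap at step (a). The claim that $w_m=(u_m-u)\phi\,v_m\to 0$ a.e.\ is \emph{not} ``inherited directly from $u_m\to u$ a.e.'': you have no pointwise control whatsoever on $v_m$ (only weak $L^1$ information), and the product of a sequence tending to $0$ a.e.\ with an $L^1$-weakly convergent sequence need not converge a.e. Concretely, on $\dom=(0,1)$ take $u\equiv 0$, $u_m\equiv 1/m$, $\phi\equiv 1$, and $v_m=m\,\mathbf{1}_{B_m}$, where the $B_m\subset(0,1)$ are consecutive ``sliding'' intervals of length $1/(m\log m)$; since $\sum_m 1/(m\log m)=\infty$, a.e.\ point lies in infinitely many $B_m$. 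All hypotheses of the lemma hold ($\norm{u_m}_{L^\infty}\le 1$, $u_m\to 0$ uniformly, $\norm{v_m}_{L^1}=1/\log m\to 0$, so $v_m\weak 0$), yet $w_m=u_mv_m=\mathbf{1}_{B_m}$ equals $1$ infinitely often and $0$ infinitely often at a.e.\ point, so it has no a.e.\ limit. Your domination (b) is by $2C\norm{\phi}_{L^\infty}|v_m|$, which is not a single integrable majorant, so it cannot rescue the pointwise claim. The Vitali route can be repaired, but only after replacing a.e.\ convergence by convergence in measure: on a finite-measure domain $u_m-u\to 0$ in measure, and $|\{|v_m|>K\}|\le \sup_m\norm{v_m}_{L^1}/K$ is uniformly small for $K$ large, whence $w_m\to 0$ in measure, and Vitali's theorem with convergence in measure plus the equiintegrability of $\{w_m\}$ you established does give $w_m\to 0$ in $L^1$.

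Your second, ``hands-on'' route is correct as written and closes the argument on its own: equiintegrability of $\{v_m\}$ (Dunford--Pettis) gives the small exceptional set, Egorov gives uniform convergence of $u_m$ off that set, and the uniform $L^1$-bound on $\{v_m\}$ finishes the estimate; the finite-measure caveat you flag is harmless, since the lemma is only applied on bounded space-time domains. This differs from the paper's proof mainly in how the second term is handled: the paper uses the same initial decomposition but truncates $v_m$ at height $M$ (controlling the un-truncated part by equiintegrability) and then applies dominated convergence to $T_M(v_m)(u-u_m)\phi$ on a large ball $B_R(0)$, with the exterior of the ball controlled by tightness, so it does not need $|\dom|<\infty$. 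Your Egorov split is shorter and avoids the truncation bookkeeping, at the price of assuming a finite-measure domain outright. So: keep the Egorov argument (or switch the Vitali argument to convergence in measure), and drop the unjustified a.e.\ convergence claim.
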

\begin{proof}
	For functions $v\in L^1(\dom)$, we define the truncation operator
	\begin{equation}\label{eq:truncation}
		T_M(v)=\begin{cases}
		 v,&\quad v\in [-M,M],\\
		 M,&\quad v> M,\\
		 -M,&\quad v<-M.
		\end{cases}
	\end{equation}
Then we let $\phi\in  L^\infty(\dom)$ be an arbitrary test function and use triangle inequality to estimate the difference
\begin{equation*}
\begin{split}
&\left| \int_{\dom} v_m u_m \phi\, dx - \int_{\dom} v u \phi\, dx\right|\leq  \left| \int_{\dom} \left(v-v_m\right) u\phi\, dx\right| +  \left|\int_{B_R(0)}  T_M(v_m) (u - u_m)\phi \,dx \right|\\
&\qquad\quad +  \left|\int_{\dom\backslash B_R(0)}  T_M(v_m) (u - u_m)\phi\, dx \right|+ \left| \int_{\dom} \left(v_m - T_M(v_m) \right)(u-u_m)\phi \, dx \right|\\
&:= A + B + D+ E.
\end{split}
\end{equation*}	
We fix $\eps>0$ and chooe $M>0$ so large that 
\begin{align*}
E & \leq  \norm{\phi}_{L^\infty} \left(\norm{u}_{L^\infty} + \norm{u_m}_{L^\infty}\right)\int_{\dom} |v_m - T_M(v_m) | dx\\
 & \leq C \int_{\{|v_m|\geq M\}} |v_m| \, dx<\frac{\eps}{4},
\end{align*}
which is possible since $v_m$ is weakly compact and therefore eqiintegrable. Thanks to the equiintegrability, we can also choose $R>0$ so large that 
\begin{equation*}
D\leq \left(\norm{u}_{L^\infty}+ \norm{u_m}_{L^\infty}\right) \norm{\phi}_{L^\infty} \int_{\dom\backslash B_R(0)} |T_M(v_m)|\, dx < \frac{\eps}{4}.
\end{equation*}
Then we let $N_0$ so large that for $m\geq N_0$, 
\begin{equation*}
A= \left| \int_{\dom} v_m u \phi\, dx - \int_{\dom} v u \phi\, dx\right|< \frac{\eps}{4},
\end{equation*}
which is possible since $v_m$ converges weakly in $L^1$ and $u\phi\in L^\infty$.
Then we choose $N_1\geq N_0$ so large that for $n\geq N_1$,
\begin{align*}
B\leq M \norm{\phi}_{L^\infty} \int_{B_R(0)} |u - u_m| \, dx<\frac{\eps}{4}, 
\end{align*}
which is possible by dominated convergence theorem, since $B_R(0)$ is bounded, $u_m$ converges strongly to $u$ almost everywhere and the sequence $u_m$ is uniformly bounded. Hence for $M,R, N_1$ large enough, we have
\begin{equation*}
\left| \int_{\dom} v_m u_m \phi\, dx - \int_{\dom} v u \phi\, dx\right| <\eps.
\end{equation*}
Since $\eps>0$ was arbitrary, this implies the claim.
\end{proof}	

\bibliographystyle{abbrv} 
\bibliography{efieldbib}

\end{document}